\numberwithin{equation}{section}
\let\OLDthebibliography\thebibliography
\renewcommand\thebibliography[1]{
	\OLDthebibliography{#1}
	\setlength{\parskip}{1pt}
	\setlength{\itemsep}{1pt plus 0.3ex}
}
\definecolor{ForestGreen}{rgb}{0.1,0.6,0.05}
\definecolor{EgyptBlue}{rgb}{0.063,0.1,0.6}
\newtheorem{theorem}{Theorem}[section]
\newtheorem{lemma}[theorem]{Lemma}
\newtheorem{proposition}[theorem]{Proposition}
\theoremstyle{definition}
\newtheorem{remark}[theorem]{Remark}
\newcommand{\W}{W_0^{1,p}(\Omega)}
\newcommand{\Wso}{\widetilde{W}_0^{s,p}(\Omega)}
\newcommand{\intO}{\int_\Omega}
\title{A posteriori estimates for problems with monotone operators
	\\ \medskip}
\author{V.~E.~Bobkov \quad S.~E.~Pastukhova\\}
\date{}
\begin{document}

	\maketitle
	
	\vspace*{-5ex}
	\begin{abstract}
		We propose a method of obtaining a posteriori estimates which does not use the duality theory and which applies to variational inequalities with monotone operators, without assuming the potentiality of operators. 
		The effectiveness of the method is demonstrated on problems driven by nonlinear operators of the $p$-Laplacian type, including the anisotropic $p$-Laplacian, polyharmonic $p$-Laplacian, and fractional $p$-Laplacian.

		\par
		\smallskip
		\noindent {\bf  Keywords}: 
		a posteriori estimates; 
		monotone operators; 
		obstacle problem; 
		$p$-Laplacian; 
		anisotropic $p$-Laplacian; 
		vectorial $p$-Laplacian; 
		polyharmonic $p$-Laplacian;
		fractional $p$-Laplacian. 
		
		\smallskip		
		\noindent {\bf MSC2010}: 
		35J92, %Quasilinear elliptic equations with $p$-Laplacian 
		35B45, %A priori estimates in context of PDEs		
		35B30. %Dependence of solutions to PDEs on initial and/or boundary data and/or on parameters of PDEs
	\end{abstract}
	
	%ToC
\renewcommand{\cftdot}{.}
\begin{quote}	
	\tableofcontents	
	\addtocontents{toc}{\vspace*{-2ex}}
\end{quote}
	
	\section{Concept of a posteriori estimates}\label{sec:general}
	
	Let $K$ be a convex closed set in a reflexive Banach space $X$.
	Fix some $h \in X^*$, where $X^*$ is the dual space.
	Consider the problem: find $u \in K$ such that 
	\begin{equation}\label{eq:abstrat1}
		\langle A u - h, w-u \rangle \geq 0 \quad \text{for any}~ w \in K. 
	\end{equation}
	Here $A: K \to X^*$ is a bounded coercive semicontinuous and strictly monotone operator.
	It is well known that the problem \eqref{eq:abstrat1} has the unique solution $u \in K$, see, e.g.,
	\cite[Chapter~2, Section~8]{lions}. 
	In the terminology of \cite{lions}, the problem \eqref{eq:abstrat1} is called a \textit{variational inequality}.
	It is easily seen that if $K = X$ or the solution $u$ of the problem \eqref{eq:abstrat1} is an interior point of $K$, then $u$ satisfies 
	 \begin{equation}\label{eq:abstrat0}
	 	\langle A u - h, w \rangle = 0 \quad \text{for any}~ w \in X.
	 \end{equation}
 
	The operator $A$ often arises as the Gateaux derivative of some convex functional $J$ defined on $X$, i.e., there is a limit
	$$
	\lim\limits_{\lambda \to 0} \frac{1}{\lambda} (J(v+\lambda w) - J(v)) = \langle J'(v), w \rangle
	$$
	for $v, w \in X$ and $J'(v) = Av$. 
	In this case, $A$ is called a potential operator, and the problem \eqref{eq:abstrat1} is equivalent to the minimization problem on the set $K$ for the functional
	$$
	F(v) = J(v) - \langle h, v \rangle.
	$$
	In general, $A$ does not have to be potential and the inequality \eqref{eq:abstrat1} is not a problem of the calculus of variations.
 
 	\smallskip
 	Since the solution $u$ of the problem \eqref{eq:abstrat1} can rarely be found in a closed form, a natural question arises: given an arbitrary function $v \in K$, is it possible to evaluate some measure of its deviation from $u$ only in terms of $v$ and the data of the problem? 
 	Estimates of such type are called \textit{a posteriori estimates}, and a large amount of works are devoted to their finding and study, see, e.g., the overviews \cite{AO,RepBook1,RepBook2,Verf}. 
 	A posteriori estimates are relevant both in numerical calculations and in the study of the dependence of solutions on the data of the problem and its perturbations. 
 	
 	One of the main measures of deviation of an approximation $v$ from the solution $u$ is the norm $\|u-v\|_X$. 
 	In this case, any functional (majorant) $M: K \to \mathbb{R}$ such that
 	\begin{equation}\label{eq:apost-norm}
 		\|u-v\|_X \leq M(v), \quad v \in K, 
 	\end{equation}
 	can be considered as a posteriori estimate. 
 	The existence of such a majorant, especially in an explicit form, provides the possibility of estimating the quality of an experimentally obtained approximation of the solution.
 	At the same time, it is desirable to have a majorant satisfying the conditions
 	\begin{align*}
 		&M(v) = 0  \quad \iff \quad v = u,\\
 		&M(v) \to 0 \quad \text{as}~ v \to u ~\text{in}~ X.
 	\end{align*}
	Then the corresponding a posteriori estimate is called consistent.
	Note, however, that the choice of the norm of $X$ as a measure of deviation is not the only option and, probably, it is not always optimal even in the case of $K=X$, see, for example, \cite{BL,LY,Rep2000-2,Rep2023} and the following section.  
	In particular, in problems with $K \neq X$, the closeness of $v$ to $u$ only in the norm of $X$ might not indicate how $v$ is ``close'' to satisfy the inequality \eqref{eq:abstrat1}. 
	The investigation of quality of various deviation measures, as well as their comparison with each other, seems to be an independent interesting problem.

	In the present work, we propose a general method of obtaining a posteriori estimates of a ``functional'' type and demonstrate its effectiveness on specific examples with nonlinear operators. 
	This method is simple, transparent, not related to variational settings of the problem, does not require the potentiality of the operator, and is based only on the strict monotonicity of the operator.
	The natural price for the simplicity and generality of the method is a \textit{possible} nonoptimality of the obtained a posteriori estimates in problems equipped with additional structures and conditions (for example, such as explicit functional spaces, integral representations, the divergent form of the operator, additional regularity of the date, etc.). 
	For instance, methods of obtaining a posteriori estimates for variational problems based on the duality theory are developed in \cite{RepBook1,RepBook2} (see also references therein).
	The development of these ideas to problems with monotone operators is given in \cite{Rep2023}, but even in this case the operators require an additional structure allowing to work with dual objects, and the considered problems are of the variational nature. 
	At the same time, there are numerous model examples in nonlinear analysis that do not allow dual formulations.
	In any case, the method of the present work can be used to obtain a posteriori estimates for a wide range of problems, perhaps, with the prospect to improve them by employing other methods, including the duality theory.
	 	
	We also note that the so-called method of integral identities, considered, for example, in \cite [Chapter~3]{RepBook1} (see also references therein), can be interpreted as a special case of the method proposed in the present work.
	
	In the conclusion of this introductory section, we note that, along with a posteriori estimates, there are a priori estimates for the analysis of approximate solutions: in such estimates, the rate of convergence of an approximation $v$ to the exact solution $u$ is controlled in terms of $u$.
	A priori estimates of this type can be useful to justify the convergence of a sequence of approximations to the solution of a considered problem.
	For example, it was proved in \cite{PYa2,PY,ZY} that the rate of convergence of Galerkin approximations to the exact solution $u$ of the  Dirichlet problem with nonlinear operators of the $p$-Laplacian type is subordinated to the distance between $u$ and a finite-dimensional subspace $X_n$ determining the $n$-th Galerkin approximation, see also \cite{BL,CK,LY}. 
	Compared to this, a posteriori estimates allow to quantify the deviation of an arbitrary element $v \in X_n$ from $u$ in a suitable norm by a majorant which does not dependent on the solution $u$, the latter one being unknown explicitly.

	\section{General approach to variational inequalities}\label{sec:method}
	
	In order to find an a posteriori estimate \eqref{eq:apost-norm} for the solution $u$ of the problem \eqref{eq:abstrat1}, we take an arbitrary element $v \in K$ and introduce a certain analogue of the norm of $Av - h$ over the set $K$:
	\begin{equation}\label{eq:K*}
		\|Av - h\|_{*,K} 
		:= 
		\sup_{w \in K \setminus \{v\}} \frac{(-\langle Av - h, w - v \rangle)_+}{\|w - v\|_X}.
	\end{equation}
	Hereinafter, we use the notation $a_+ = \max\{a,0\}$.  
	By virtue of the inequality \eqref{eq:abstrat1}, we have $\|Au - h\|_{*,K} = 0$, and for any $v \in K \setminus \{u\}$ the value of \eqref{eq:K*} is positive.
	Indeed, taking $w=u$ in \eqref{eq:K*} and applying the identity
	\begin{equation}\label{eq:abstract:object}
		-\langle Av - h, u-v \rangle
		=
		\langle Au - Av, u-v \rangle
		+
		\langle Au - h, v-u \rangle,
		\quad v \in K, 
	\end{equation} 
	we derive $\|Av - h\|_{*,K} > 0$ for any $v \in K \setminus \{u\}$ from the strict monotonicity of the operator $A$ and the inequality \eqref{eq:abstrat1}.
	This fact also follows from the uniqueness of the solution of the problem \eqref{eq:abstrat1}: if $\|Av - h\|_{*,K} = 0$ for some $v \in K$, then
	$$
	\langle A v - h, w-v \rangle \geq 0 \quad \text{for any}~ w \in K,
	$$
	and hence $v=u$. 
	Note that if $A$ is defined on the whole $X$, then $\|Av - h\|_{*,K} \leq \|Av - h\|_{*}$, where $\|Av - h\|_{*}$ denotes the standard operator norm:
	$$
	\|Av - h\|_{*} 
	= 
	\sup_{w \in X \setminus \{v\}} \frac{\langle Av - h, w - v \rangle}{\|w - v\|_X}
	=
	\sup_{\omega \in X \setminus \{0\}} \frac{\langle Av - h, \omega \rangle}{\|\omega\|_X}.
	$$
	In the case $K=X$, we have $\|Av - h\|_{*,K} = \|Av - h\|_{*}$. 
	It is desirable to have the continuity property
	\begin{equation}\label{eq:convAA}
	\|Av - h\|_{*,K} \to \|Au - h\|_{*,K} = 0
	\quad \text{as}~ v \to u ~\text{in}~ X, ~~ \{v\} \subset K,
	\end{equation}
	which, generally speaking, is nontrivial and depends on properties of $A$ and other data of the problem, as well as on the choice of the sequence $\{v\}$. 
	If $K=X$ and $A$ is continuous in the strong topology of $X^*$, then \eqref{eq:convAA} is valid.
	
	\medskip
	In view of the notation \eqref{eq:K*}, for any $v \in K$ we have
	\begin{equation}\label{eq:abstract:up00}
	 -\langle Av - h, u-v \rangle \leq \|Av - h\|_{*,K} \, \|u-v\|_X.
	\end{equation}
	Using the identity \eqref{eq:abstract:object}, we rewrite the estimate \eqref{eq:abstract:up00} in the following more convenient form:
	\begin{equation}\label{eq:abstract-main00}
		\frac{\langle Au - Av, u-v \rangle}{\|u-v\|_X}
		+ 
		\frac{\langle Au - h, v-u \rangle}{\|u-v\|_X}
		\leq
		\|Av - h\|_{*,K},
	\end{equation}
	where the left-hand side is positive for any $v \in K \setminus \{u\}$.
	For consistency, we set the left-hand side of \eqref{eq:abstract-main00} to be zero when $v=u$. 
	If $A$ is continuous in the strong topology of $X^*$, then for the first term on the left-hand side of \eqref{eq:abstract-main00} we have
	$$
	\frac{\langle Au - Av, u-v \rangle}{\|u-v\|_X} \to 0
	\quad \text{for any sequence}~ v \to u ~\text{in}~ X,~~ \{v\} \subset K \setminus \{u\},
	$$
	and hence the term 
	$
	\frac{\langle Au - Av, u-v \rangle}{\|u-v\|_X}
	$ 
	serves as a natural measure of deviation of $v$ from $u$ in $X$. 
	The second term on the left-hand side of \eqref{eq:abstract-main00} no longer has such a property in the general case if $K \neq X$.
	However, it follows from the boundedness of $A$ that any sequence $v \to u$, $\{v\} \subset K \setminus \{u\}$, contains a subsequence (which we again denote as $\{v\}$) for which
	$$
	\frac{\langle Au - h, v-u \rangle}{\|u-v\|_X} \to C_{\{v\}}
	\in [0,+\infty), 
	$$
	where $C_{\{v\}}$ is a constant depending on $\{v\}$. 
	If $C_{\{v\}}>0$, then this indicates that the sequence $\{v\}$ poorly approximates the property of $u$ to be a solution of \eqref{eq:abstrat1}, since
	$$
	0 < C_{\{v\}} \leq \lim_{v \to u} \|Av - h\|_{*,K}. 
	$$
	The term $\frac{\langle Au - h, v-u \rangle}{\|u-v\|_X}$ can be especially relevant in the case $K \neq X$, see, e.g., Remark~\ref{rem:ANR} about an obstacle problem for the $p$-Laplacian. 
 	Note that if there is a convergence \eqref{eq:convAA} for some sequence $\{v\}$, then both sides of \eqref{eq:abstract-main00} tend to zero on this sequence.
 	
	Apparently, the estimate \eqref{eq:abstract-main00} is the most general estimate of a posteriori type that can be obtained under the imposed assumptions.
	
	Another form of a posteriori estimate similar to \eqref{eq:abstract-main00} can be obtained from \eqref{eq:abstract:up00} by extracting $\|u-v\|_X$ from the product $\|Av - h\|_{*,K} \, \|u-v\|_X$ via Young's inequality $ab \leq f(a) + f^*(b)$, where $f$ and $f^*$ are some convex Young conjugate functions:
	\begin{equation}\label{eq:abstract-main01}
		\langle Au - Av, u-v \rangle 
		+ 
		\langle Au - h, v-u \rangle
		- 
		f(\|u-v\|_X) 
		\leq
		f^*(\|Av - h\|_{*,K}).
	\end{equation}
	In this case, the choice of a suitable function $f$ should ensure the positivity of the left-hand side of \eqref{eq:abstract-main01} and it depends on properties of the operator $A$ and other data of the problem.
	
	\medskip
	Suppose now that the strict monotonicity of $A$ can be refined as follows:
	$$
\langle Au - Av, u-v \rangle \geq G(\|v\|_X, \|u-v\|_X), 
\quad v \in K,
$$
where the function $G : [0,+\infty)^2 \to [0,+\infty)$ is such that the mapping 
$t \mapsto G(s,t)/t$ is nondecreasing for $t \geq 0$, tends to $0$ as $t \to 0$, and invertible for any $s \geq 0$. 
Denote
$$
g(s,\cdot) := \left(t \mapsto \frac{G(s,t)}{t}\right)^{-1}.
$$
In this case, it follows from the estimate \eqref{eq:abstract-main00} that
\begin{equation}\label{eq:abstract-main3}
		\frac{G(\|v\|_X, \|u-v\|_X)}{\|u-v\|_X}
		+ 
		\frac{\langle Au - h, v-u \rangle}{\|u-v\|_X}
		\leq
		\|Av - h\|_{*,K}.
	\end{equation}
	In particular, noting the nonnegativity of the second term on the left-hand side of \eqref{eq:abstract-main3} and using the properties of the function $G$, we deduce the inequality
	\begin{equation}\label{eq:abstract-main1}
		\|u-v\|_X \leq g(\|v\|_X, \|Av - h\|_{*,K}).
	\end{equation}
	If we estimate the first term on the left-hand side of \eqref{eq:abstract-main3} by zero, then, using \eqref{eq:abstract-main1}, we obtain
	\begin{equation*}\label{eq:abstract-main2}
	\langle Au - h, v-u \rangle
	\leq 
	\|Av - h\|_{*,K} \,\, g(\|v\|_X, \|Av - h\|_{*,K}).
	\end{equation*}

\medskip
	The quantity $\|Av - h\|_{*,K}$ is not determined constructively. 
	For a number of specific model operators, mainly defined by integral forms, there are methods related to the structure of the operator and the regularity of the corresponding solutions, which allow to estimate $\|Av - h\|_{*,K}$ from above constructively and, in some cases, to guarantee its convergence to zero as $v \to u$ in $X$, see, e.g., \cite{RepBook1,RepBook2}.
	
	We demonstrate the application of the general method and the derivation of constructive estimates on $\|Av - h\|_{*,K}$ and the right-hand sides in \eqref{eq:abstract-main00}, \eqref{eq:abstract-main3}, \eqref{eq:abstract-main1} in the following sections.
	We consider the Poisson equation with Dirichlet boundary conditions (the Poisson problem, for brevity; Section~\ref{sec:p-Poisson}), an obstacle problem (Section~\ref{sec:obstacle}), the Poisson problem with an anisotropic operator (Section~\ref{sec:px-Poisson}), the Poisson equation with Neumann boundary conditions (Section~\ref{sec:Neumann}), the vector Poisson problem (Section~\ref{sec:vector}), an obstacle problem for a polyharmonic operator (Section~\ref{sec:polyharm}), and the Poisson problem with a nonlocal operator (Section~\ref{sec:fractional-Poisson}).
	All our examples are nonlinear and defined by operators of the $p$-Laplacian type.
		
	Constructive a posteriori estimates for the Poisson problem obtained in Section~\ref{sec:p-Poisson}, as well as related estimates for the obstacle problem considered in Section~\ref{sec:obstacle}, are known in the literature, but we derive them using a different method, which is explicitly noted in the corresponding remarks. 
	These results are given in the present work in order to demonstrate the method on more or less simple classical examples.
	At the same time, a posteriori estimates proposed in Sections \ref{sec:collection}, \ref{sec:polyharm}, \ref{sec:fractional-Poisson} seem to be new and, apparently, have not been presented in the literature in the considered generality.

	\medskip
	In what follows, unless explicitly stated otherwise, we assume that
	$p>1$, $\Omega \subset \mathbb{R}^N$ is a domain of bounded measure, 
	$N \geq 1$, and $h \in L^{p'}(\Omega) \setminus \{0\}$, where $p' := p/(p-1)$. 
	We denote by $\|\cdot\|_q$ the standard norm of $L^q(\Omega)$ and sometimes write
	$\|\cdot\|_{q,D}$ to emphasize that the domain of integration is a set $D$. 
	By $B_R$ we denote an open ball of radius $R$ in $\mathbb{R}^N$, by $a \cdot b$ the scalar product of vectors $a,b \in \mathbb{R}^k$, by $W^{m,p}(\Omega)$ and $W_0^{m,p}(\Omega)$ the standard Sobolev spaces, and by $\langle \cdot, \cdot \rangle$ the duality between $Y^*$ and $Y$ for any vector space $Y$.

	\section{Poisson problem}\label{sec:p-Poisson}

	Consider the problem of finding such a function $u \in \W$ that
	\begin{equation}\label{eq:weak1}
		\intO |\nabla u|^{p-2} \nabla u \cdot \nabla \xi \,dx
		=
		\intO h \xi \,dx
		\quad \text{for any}~ \xi \in \W.
	\end{equation}
	This problem coincides with \eqref{eq:abstrat0} by choosing $X = \W$, $X^* = W^{-1,p'}(\Omega)$, and $A = -\Delta_p : \W \mapsto W^{-1,p'}(\Omega)$. 
	For smooth functions, the $p$-Laplacian $\Delta_pu$ can be defined pointwisely as $\Delta_pu = \text{div}(|\nabla u|^{p-2} \nabla u)$.
	The $p$-Laplacian is potential, hence the problem \eqref{eq:weak1} can be interpreted as the problem of finding critical points of the functional
	\begin{equation}\label{eq:F}
	F(v) = \frac{1}{p}\intO |\nabla v|^p \,dx 
	- 
	\intO h v \,dx, \quad v \in \W.
	\end{equation}
	In particular, in the notation of Section~\ref{sec:method}, 
	\begin{equation*}\label{eq:AAF}
	\|Av - h\|_{*,\W} = \|Av - h\|_{*} = \|F'(v)\|_*, \quad v \in \W.
	\end{equation*}
	It is well known that the functional $F$ is Gateaux differentiable, coercive, and weakly lower semi-continuous. Moreover, $F$ is strictly convex in $\W$. 
	Therefore, $F$ has the unique critical point $u \in \W$, which is its global minimum and a weak solution to the problem
	\begin{equation*}\label{eq:Poisson:pointwise}
		\left\{
		\begin{aligned}
		-\Delta_p u &= h \quad \text{in}~ \Omega,\\
		u &= 0 \quad \text{on}~ \partial\Omega.
		\end{aligned}
		\right.
	\end{equation*}
	Below, we will use the following energy estimate obtained from \eqref{eq:weak1} with 
	$\xi = u$ and the H\"older and Friedrichs inequalities:
	\begin{equation}\label{eq:energy-id1}
		\|\nabla u\|_p^p 
		\leq 
		C_F^{p'} \|h\|_{p'}^{p'},
	\end{equation}
	where $C_F$ is the constant of embedding $\W \subset L^p(\Omega)$.
	Although $C_F$ is defined non-constructively, explicit two-sided bounds on it are well known.
	Since $C_F = \lambda_1^{-1/p}(p;\Omega)$, where $\lambda_1(p;\Omega)$ stands for the first eigenvalue of the $p$-Laplacian in $\Omega$ with Dirichlet boundary conditions, estimates on $C_F$ are expressed through estimates on $\lambda_1(p;\Omega)$.
	One of the classical lower bounds is the Faber--Krahn inequality:
	\begin{equation*}\label{eq:FK}
	\lambda_1(p;\Omega) \geq |\Omega|^{-p/N} |B_R|^{p/N} \lambda_1(p;B_R) \quad \text{for any } R>0.
	\end{equation*}
	The domain monotonicity of the first eigenvalue gives a simple upper bound
	$$
	\lambda_1(p;\Omega) \leq \lambda_1(p;B_{r_\Omega}), 
	$$
	where $r_\Omega$ is the inradius of $\Omega$. 
	In turn, the following constructive two-sided estimates on $\lambda_1(p;B_R)$ in terms of $p$, $N$, $R$ are given, e.g., in \cite{BD}:
	$$
	\frac{Np}{R^p} 
	\leq
	\lambda_1(p;B_R)
	\leq
	\frac{(p+1)(p+2)\cdots(p+N)}{N! R^p}.
	$$
	
	We introduce the following auxiliary space of the Sobolev type:
	\begin{equation*}\label{eq:Q*}
	Q^* = \{\eta \in L^{p'}(\Omega;\mathbb{R}^N):~ \text{div}\, \eta \in L^{p'}(\Omega)\}.
	\end{equation*}
	\begin{remark}[Regularity]\label{rem:regQ}
	It is well known that the assumption $h \in L^{p'}(\Omega)$ yields $u \in C(\Omega) \cap L^\infty(\Omega)$. 
	If $h \in L^{\infty}(\Omega)$, then $u \in C^{1,\alpha}(\Omega)$ for some $\alpha \in (0,1)$, see \cite{diben} and also \cite{lieberman}.
	On the other hand, since $u \in \W$, the flux $|\nabla u|^{p-2} \nabla u$ belongs to $L^{p'}(\Omega;\mathbb{R}^N)$. 
	The question of whether $|\nabla u|^{p-2} \nabla u$ belongs to $Q^*$ is nontrivial in the case $p \neq 2$. 
	An assumption guaranteeing that
	\begin{equation}\label{eq:pointwise}
	-\Delta_p u = h ~\text{a.e.\ in}~ \Omega
	\quad \text{and} \quad
	|\nabla u|^{p-2} \nabla u \in Q^*
	\end{equation}
	is a sufficient regularity of the flux, namely, $|\nabla u|^{p-2} \nabla u \in W^{1,1}_{\text{loc}}(\Omega; \mathbb{R}^N)$. This follows from the weak formulation \eqref{eq:weak1} with $\xi \in C_0^{\infty}(\Omega)$ and the fundamental lemma of the calculus of variations. 
	Such regularity of the flux takes place if $h \in L^{\max\{2,p'\}}(\Omega)$, see \cite{ciacni-mazja} and discussions in \cite{antonini,CM,guarnotta}. 
	\end{remark}
	
	Using the approach described in Section~\ref{sec:method}, we prove the following a posteriori estimate.
	\begin{theorem}\label{thm:1}
		Let $p \geq 2$. 
		Then for the solution $u$ of \eqref{eq:weak1} and any $v \in \W$ the following estimate holds:
		\begin{equation}\label{eq:thm:1}
			\|\nabla (u-v)\|_p
			\leq
			2^{\frac{p-2}{p-1}} \|F'(v)\|_{*}^\frac{1}{p-1}
			\leq 
			2^{\frac{p-2}{p-1}} 
			\inf_{\eta^* \in Q^*}\left(
			\|\tau^* - \eta^*\|_{p'} + C_F \|\mathrm{div}\, \eta^* + h\|_{p'}
			\right)^{\frac{1}{p-1}},
		\end{equation}
	where $\tau^* := |\nabla v|^{p-2} \nabla v$. 
	\end{theorem}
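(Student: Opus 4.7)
The plan is to apply the abstract framework of Section~\ref{sec:method} with $K=X=\W$ and $A=-\Delta_p$, in which case $\|Av-h\|_{*,K}=\|F'(v)\|_*$, and to combine it with two concrete ingredients: (i) a quantitative strict monotonicity bound for $-\Delta_p$ valid for $p\geq 2$, and (ii) an integration-by-parts trick introducing an auxiliary vector field $\eta^*\in Q^*$.

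First I would establish the refined monotonicity. For $p\geq 2$ there is the classical pointwise inequality
$$
\bigl(|a|^{p-2}a-|b|^{p-2}b\bigr)\cdot(a-b)\geq 2^{2-p}|a-b|^p,\qquad a,b\in\mathbb{R}^N,
$$
which, integrated over $\Omega$ with $a=\nabla u$, $b=\nabla v$, yields
$$
\langle Au-Av,u-v\rangle\geq 2^{2-p}\|\nabla(u-v)\|_p^p.
$$
Thus in the notation of Section~\ref{sec:method} one can take $G(s,t)=2^{2-p}t^p$ (independent of $s$), so $G(s,t)/t=2^{2-p}t^{p-1}$ is increasing and invertible with inverse $g(s,r)=2^{(p-2)/(p-1)}r^{1/(p-1)}$. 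Since $K=X$, the term $\langle Au-h,v-u\rangle$ in \eqref{eq:abstract-main3} vanishes because $u$ solves \eqref{eq:weak1}. Plugging this into \eqref{eq:abstract-main1} immediately gives the first inequality of \eqref{eq:thm:1}:
$$
\|\nabla(u-v)\|_p\leq 2^{(p-2)/(p-1)}\|F'(v)\|_*^{1/(p-1)}.
$$

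The second step is to bound $\|F'(v)\|_*$ constructively. For any test function $\xi\in\W$,
$$
\langle F'(v),\xi\rangle=\intO\tau^*\cdot\nabla\xi\,dx-\intO h\xi\,dx.
$$
The key observation is that for every $\eta^*\in Q^*$ the integration-by-parts identity $\intO\eta^*\cdot\nabla\xi\,dx=-\intO(\mathrm{div}\,\eta^*)\xi\,dx$ holds (justified by approximating $\xi$ by $C_0^\infty(\Omega)$ functions since $\eta^*, \mathrm{div}\,\eta^*\in L^{p'}(\Omega)$ and $\xi\in\W$). Inserting this zero quantity yields
$$
\langle F'(v),\xi\rangle=\intO(\tau^*-\eta^*)\cdot\nabla\xi\,dx-\intO(\mathrm{div}\,\eta^*+h)\xi\,dx,
$$
and Hölder's inequality combined with the Friedrichs inequality $\|\xi\|_p\leq C_F\|\nabla\xi\|_p$ gives
$$
|\langle F'(v),\xi\rangle|\leq\bigl(\|\tau^*-\eta^*\|_{p'}+C_F\|\mathrm{div}\,\eta^*+h\|_{p'}\bigr)\|\nabla\xi\|_p.
$$
Dividing by $\|\nabla\xi\|_p$, taking the supremum over $\xi$, and then the infimum over $\eta^*\in Q^*$ delivers the second inequality in \eqref{eq:thm:1}; raising to the power $1/(p-1)$ and chaining with the first estimate completes the proof.

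The only delicate point, as a matter of analysis, is the pointwise monotonicity inequality for $p\geq 2$ (which restricts the theorem to that range) and the justification of the integration-by-parts identity on $\W$ for fields with merely $L^{p'}$ divergence; both are standard. Everything else is a direct unwinding of the abstract machinery of Section~\ref{sec:method}.
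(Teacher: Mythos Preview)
Your proof is correct and follows essentially the same approach as the paper: the same pointwise monotonicity inequality for $p\geq 2$ is combined with the upper bound $-\langle F'(v),u-v\rangle\leq\|F'(v)\|_*\|\nabla(u-v)\|_p$ to obtain the first estimate, and the same insertion of an auxiliary field $\eta^*\in Q^*$ followed by integration by parts, H\"older, and Friedrichs yields the second. The only cosmetic difference is that you phrase the first step via the abstract functions $G$ and $g$ of Section~\ref{sec:method}, whereas the paper carries out the identical computation directly.
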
	
	\begin{proof}
		Let us estimate the following expression from below and from above:
		\begin{align}
		-\langle F'(v), u-v \rangle
		&=
		-\intO |\nabla v|^{p-2} \nabla v \cdot \nabla (u-v) \,dx + \intO h (u-v) \,dx\\
		\label{eq:Fu-v}
		&=
		\intO (|\nabla u|^{p-2} \nabla u - |\nabla v|^{p-2} \nabla v) \cdot \nabla (u-v) \,dx.
		\end{align}
		By virtue of the well known algebraic inequality (see, e.g., \cite[Chapter~12, (I)]{Lind})
		\begin{equation}\label{eq:algebr-ineq1}
			(|b|^{p-2}b - |a|^{p-2}a) \cdot (b-a) \geq 2^{2-p} |b-a|^p,
			\quad a,b \in \mathbb{R}^N, \quad p \geq 2,
		\end{equation}
	the lower bound on \eqref{eq:Fu-v} takes the form
		\begin{equation}\label{eq:thm1:proof:0}
			-\langle F'(v), u-v \rangle
			\geq
			2^{2-p} \intO |\nabla (u - v)|^p \,dx
			=
			2^{2-p} \|\nabla (u - v)\|_p^p.
		\end{equation}
		On the other hand, we have the upper bound
		\begin{equation}\label{eq:thm1:proof:1x}
		-\langle F'(v), u-v \rangle \leq \|F'(v)\|_{*} \, \|\nabla (u-v)\|_p.
		\end{equation}
		Combining \eqref{eq:thm1:proof:0} with \eqref{eq:thm1:proof:1x}, we obtain the estimate of the type~\eqref{eq:abstract-main3}:
		\begin{equation}\label{eq:thm1:proof:2x}
		2^{2-p} \|\nabla (u - v)\|_p^{p-1} \leq \|F'(v)\|_{*}.
		\end{equation}
		In order to estimate $\|F'(v)\|_{*}$, we take arbitrary $\xi \in \W$ and $\eta^* \in Q^*$ and get
		\begin{align}
			|\langle F'(v), \xi \rangle|
			&=
			|\intO (|\nabla v|^{p-2} \nabla v - \eta^*) \cdot \nabla \xi \,dx + \intO \eta^* \cdot \nabla \xi  \,dx - \intO h \xi \,dx|
			\\
			&=
			|\intO (|\nabla v|^{p-2} \nabla v - \eta^*) \cdot \nabla \xi \,dx - \intO (\text{div}\,\eta^* + h) \xi \,dx|
			\\
			&\leq
			|\intO (|\nabla v|^{p-2} \nabla v - \eta^*) \cdot \nabla \xi \,dx| + |\intO (\text{div}\,\eta^* + h) \xi \,dx|
			\\
			&\leq
			\||\nabla v|^{p-2} \nabla v - \eta^*\|_{p'} \|\nabla \xi\|_p
			+
			\|\text{div}\,\eta^* + h\|_{p'}
			\|\xi\|_p
			\\
			\label{eq:thm1:proof:1}
			&\leq
			\|\nabla \xi\|_p \left(\||\nabla v|^{p-2} \nabla v - \eta^*\|_{p'} 
			+
			C_F \|\text{div}\,\eta^* + h\|_{p'}\right).
		\end{align}
		Combining \eqref{eq:thm1:proof:2x} with \eqref{eq:thm1:proof:1}, we derive
		\begin{equation}\label{eq:thm1:proof:2}
		2^{2-p} \|\nabla u - \nabla v\|_p^{p-1}
		\leq
		\|F'(v)\|_{*}
		\leq
		\||\nabla v|^{p-2} \nabla v - \eta^*\|_{p'}
		+
		C_F \|\text{div}\,\eta^* + h\|_{p'},
		\end{equation}
		which implies the estimate \eqref{eq:thm:1}.
	\end{proof}

	A similar result is true in the case $p < 2$.
	\begin{theorem}\label{thm:2}
		Let $p < 2$. 
		Then for the solution $u$ of \eqref{eq:weak1} and any $v \in \W$ the following estimate holds:
		\begin{equation}\label{eq:thm:2}
			\|\nabla (u-v)\|_p
			\leq
			C \, \|F'(v)\|_*
			\leq
			C \,
			\inf_{\eta^* \in Q^*}
			\left(
			\|\tau^* - \eta^*\|_{p'} + C_F \|\mathrm{div}\, \eta^* + h\|_{p'}
			\right),
		\end{equation}
		where $\tau^* := |\nabla v|^{p-2} \nabla v$ and $C = (p-1)^{-1} \, 2^{\frac{2-p}{p}} \left(C_F^{p'}\|h\|_{p'}^{p'} + \|\nabla v\|_p^p \right)^{\frac{2-p}{p}}$.
	\end{theorem}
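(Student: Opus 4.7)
The proof plan follows the same two-step template as Theorem~\ref{thm:1}: first bound $\|\nabla(u-v)\|_p$ from above by (a power of) $\|F'(v)\|_*$, then bound $\|F'(v)\|_*$ in terms of the dual variable $\eta^*$. The second step transfers unchanged from the previous proof, since the computation \eqref{eq:thm1:proof:1} only uses the definition of $F'(v)$ and H\"older's inequality, and does not depend on the sign of $p-2$. The genuinely new work is producing an inequality of the form \eqref{eq:thm1:proof:2x} when $p<2$, where the algebraic inequality \eqref{eq:algebr-ineq1} is no longer available.

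The key input is the subquadratic analogue of \eqref{eq:algebr-ineq1}, valid for $1<p<2$:
\begin{equation*}
(|b|^{p-2}b-|a|^{p-2}a)\cdot(b-a) \geq (p-1)\,|b-a|^{2}\,(|a|+|b|)^{p-2},
\qquad a,b\in\mathbb{R}^N.
\end{equation*}
Applied inside \eqref{eq:Fu-v}, this yields the weighted lower bound
\begin{equation*}
-\langle F'(v), u-v\rangle \geq (p-1)\intO |\nabla(u-v)|^{2}\,(|\nabla u|+|\nabla v|)^{p-2}\,dx.
\end{equation*}
Since $p-2<0$, this does not directly control $\|\nabla(u-v)\|_p^p$, so I would recover the $L^p$-norm through a H\"older splitting: writing
$$|\nabla(u-v)|^{p}=\bigl(|\nabla(u-v)|^{2}(|\nabla u|+|\nabla v|)^{p-2}\bigr)^{p/2}\cdot(|\nabla u|+|\nabla v|)^{p(2-p)/2}$$
and applying H\"older with conjugate exponents $2/p$ and $2/(2-p)$ (both strictly greater than $1$ since $1<p<2$), I get
$$\|\nabla(u-v)\|_{p}^{p} \leq \Bigl(\intO|\nabla(u-v)|^{2}(|\nabla u|+|\nabla v|)^{p-2}\,dx\Bigr)^{p/2}\cdot J^{(2-p)/2},$$
where $J:=\intO(|\nabla u|+|\nabla v|)^{p}\,dx$.

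Combining this with the weighted lower bound and the upper bound $-\langle F'(v),u-v\rangle\leq \|F'(v)\|_{*}\|\nabla(u-v)\|_p$, and simplifying by dividing out $\|\nabla(u-v)\|_p^{p/2}$, I would obtain
$$\|\nabla(u-v)\|_{p} \leq \frac{1}{p-1}\,\|F'(v)\|_{*}\,J^{(2-p)/p}.$$
To make the majorant computable, I would bound $J$ using $(|\nabla u|+|\nabla v|)^{p}\leq 2^{p-1}(|\nabla u|^p+|\nabla v|^p)$ together with the a priori energy estimate \eqref{eq:energy-id1}, which gives
$$J \leq 2^{p-1}\bigl(C_F^{p'}\|h\|_{p'}^{p'}+\|\nabla v\|_p^p\bigr).$$
Inserting this into the previous inequality produces exactly a constant of the form
$C=(p-1)^{-1}2^{(2-p)/p}\bigl(C_F^{p'}\|h\|_{p'}^{p'}+\|\nabla v\|_p^p\bigr)^{(2-p)/p}$
(up to the absorbed numerical factor), and the second inequality in \eqref{eq:thm:2} follows by reusing \eqref{eq:thm1:proof:1}.

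The main obstacle is conceptual rather than computational: the subquadratic algebraic inequality produces a weighted quadratic quantity rather than a clean $L^p$-norm, so a H\"older detour is needed, and this detour forces the energy estimate into the constant. The resulting a posteriori bound is therefore nonhomogeneous in $v$ --- the majorant depends on $\|\nabla v\|_p$ as well as on $\|F'(v)\|_*$ --- in contrast to the pure power dependence of Theorem~\ref{thm:1}.
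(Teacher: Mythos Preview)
Your proposal is correct and follows essentially the same route as the paper: the same subquadratic algebraic inequality, the same H\"older splitting with exponents $2/p$ and $2/(2-p)$, the same use of the energy estimate \eqref{eq:energy-id1} to control $J$, and the same reuse of \eqref{eq:thm1:proof:1} for the second inequality. The only discrepancy is numerical: to recover the stated constant $2^{(2-p)/p}$ exactly, use the cruder bound $(|a|+|b|)^p\le 2(|a|^p+|b|^p)$ (i.e., $(1+|t|)^p\le 2(1+|t|^p)$) rather than the sharper $2^{p-1}$; your choice would in fact give a slightly better constant.
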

	\begin{proof}
		The proof is similar to that of Theorem~\ref{thm:1} and is a special case of the general approach described in Section~\ref{sec:method}.
		Let us give the necessary details.
		
		The upper bound on $-\langle F'(v), u-v \rangle$ repeats \eqref{eq:thm1:proof:1x} and \eqref{eq:thm1:proof:1}.
		The lower bound is a little more technical and uses the following algebraic inequality instead of \eqref{eq:algebr-ineq1} (see the estimate on \cite[Chapter~12, p.~100]{Lind} combined with $\int_0^1 |a+t(b-a)|^{p-2}\,dt \geq (|a|+|b|)^{p-2}$ for $p<2$):
		\begin{equation}\label{eq:algebr-ineq2}
		(|b|^{p-2}b - |a|^{p-2}a) \cdot (b-a) 
		\geq 
		(p-1) \frac{|b-a|^2}{(|b|+|a|)^{2-p}}
		\quad a,b \in \mathbb{R}^N, \quad p \in (1,2),
		\end{equation}
		where we assume that the right-hand side to be zero when $a=b=0$.
		Consecutively using the H\"older inequality, \eqref{eq:algebr-ineq2}, the inequality $(1+|t|)^p \leq 2 (1+|t|^p)$, and the energy estimate \eqref{eq:energy-id1}, we get
		\begin{align}
		 &\intO |\nabla (u-v)|^p \,dx
		 =
		 \intO \left(\frac{|\nabla (u-v)|^{2}}{(|\nabla u| + |\nabla v|)^{2-p}}\right)^{\frac{p}{2}} (|\nabla u| + |\nabla v|)^{\frac{p(2-p)}{2}} \,dx\\
		 &\leq
		 \left(\intO \frac{|\nabla (u-v)|^2}{(|\nabla u| + |\nabla v|)^{2-p}} \,dx \right)^{\frac{p}{2}}
		 \left(\intO (|\nabla u| + |\nabla v|)^{p} \, dx\right)^{\frac{2-p}{2}}\\
		 &\leq
		 \left(\frac{1}{p-1} \intO (|\nabla u|^{p-2} \nabla u - |\nabla v|^{p-2} \nabla v) \cdot (\nabla u- \nabla v) \,dx\right)^{\frac{p}{2}}
		 \left( 2\intO |\nabla u|^p \,dx + 2 \intO |\nabla v|^{p} \, dx\right)^{\frac{2-p}{2}}\\
		 \label{eq:thm2:p<2}
		 &\leq
		 \left(\frac{1}{p-1} \intO (|\nabla u|^{p-2} \nabla u - |\nabla v|^{p-2} \nabla v) \cdot (\nabla u- \nabla v) \,dx\right)^{\frac{p}{2}}
		 2^{\frac{2-p}{2}}\left(C_F^{p'}\|h\|_{p'}^{p'} + \|\nabla v\|_p^p\right)^{\frac{2-p}{2}}.
		\end{align}
		From here and from \eqref{eq:Fu-v}, we obtain the lower bound
		\begin{align}
			-\langle F'(v), u-v \rangle
			&=
			\intO (|\nabla u|^{p-2} \nabla u - |\nabla v|^{p-2} \nabla v) \cdot (\nabla u - \nabla v) \,dx
			\\
			\label{eq:thm2:proof:0}
			&\geq
			\frac{(p-1) \, 2^{\frac{p-2}{p}}}{\left(C_F^{p'}\|h\|_{p'}^{p'} + \|\nabla v\|_p^p \right)^{\frac{2-p}{p}}} \, \|\nabla (u - v)\|_p^2.
		\end{align}
	Combining this estimate with \eqref{eq:thm1:proof:1x} and \eqref{eq:thm1:proof:1}, we arrive at \eqref{eq:thm:2}.
	\end{proof}
	
	\begin{remark}
		Theorems~\ref{thm:1} and \ref{thm:2} have the following ``geometric'' interpretation: if the derivative $F'(v)$ is small, then $v$ is close to a critical point.
		And since the critical point of $F$ is unique, $v$ is close to $u$.
		The proof shows that the smallness of $F'(v)$ in the direction $u-v$ is sufficient for the analysis.
		
		According to the general approach (see \eqref{eq:abstract-main00}), a natural measure of approximation of $v$ to $u$ estimated through $\|F'(v)\|_{*}$ is the quantity
		$$
		\frac{-\langle F'(v), u-v \rangle}{\|\nabla (u-v)\|_p} 
		=
		\frac{\intO (|\nabla u|^{p-2} \nabla u - |\nabla v|^{p-2} \nabla v) \cdot (\nabla u - \nabla v) \,dx}{\intO |\nabla (u-v)|^{p} \,dx},
		$$
		and Theorems~\ref{thm:1} and \ref{thm:2} give convenient lower bounds for it in terms of the norm.
	\end{remark}
	
		\begin{remark}\label{rem:regQ-discussion}
		Results similar to Theorems~\ref{thm:1} and \ref{thm:2} were obtained in \cite{P1}
		(see also related estimates in \cite{BR,Rep2000-1,RepBook1} and estimates of other types in \cite{BL,CK})
		using estimates of dual functionals, which made their derivation quite bulky.
		Our proof of Theorems~\ref{thm:1}, \ref{thm:2} is elementary and transparent.
		For example, in the case $p \geq 2$, the estimate \eqref{eq:thm:1} without the intermediate term $\|F'(v)\|_*$ is deduced in \cite{P1} from the following abstract estimate obtained in \cite[Theorem~3.1]{P1}:
		\begin{equation}\label{eq:past}
		\frac{1}{p} 2^{1-p} \|\nabla (u-v)\|_p^p 
		\leq
		\inf_{q^* \in Q^*_h} \left(F(v) + \int_\Omega \frac{|q^*|^{p'}}{p'} \,dx\right),
		\end{equation}
		where
		$$
		Q^*_h := 
		\{
		q^* \in L^{p'}(\Omega;\mathbb{R}^N):~ \text{div}\, q^* + h = 0
		\}.
		$$
		Evaluating the majorant in \eqref{eq:past} is equivalent to solving a dual problem to \eqref{eq:weak1}, which is no less difficult than solving the original problem.
		Roughening the right-hand side in \eqref{eq:past}, we write the majorant of deviation of $v$ from $u$ with a fixed function $q^* \in Q^*_h$:
		\begin{align}
		M(v,q^*)
		:=
		F(v) + \int_\Omega \frac{|q^*|^{p'}}{p'} \,dx 
		&=
		\int_\Omega \frac{|\nabla v|^{p}}{p} \,dx
		+
		\int_\Omega \frac{|q^*|^{p'}}{p'} \,dx
		-
		\intO h v \,dx\\
		&=
		\int_\Omega \left(
		\frac{|\nabla v|^{p}}{p} 
		+
		\frac{|q^*|^{p'}}{p'}
		-
		q^* \cdot \nabla v
		\right)dx \geq 0.
		\end{align}
		The non-negativity of $M(v,q^*)$ follows from Young's inequality, and equality is observed only if the elements $q^*$ and $\nabla v$ are conjugate to each other, i.e., $q^* = |\nabla v|^{p-2} \nabla v$.
		Then, since $q^* \in Q^*_h$, we have
		$$
		0 
		= 
		h + \text{div}\, q^*
		=
		h + \text{div}(|\nabla v|^{p-2} \nabla v),
		$$
		that is, $v=u$ is a solution of \eqref{eq:weak1}.
		
		The estimate \eqref{eq:past} appeared in \cite{P1} as a rewritten version of the following inequality obtained via the duality theory: 
		\begin{equation}\label{eq:past2}
		\frac{1}{p} 2^{1-p} \|\nabla (u-v)\|_p^p 
		\leq 
		F(v) - F(u).
		\end{equation}
		The right-hand side of \eqref{eq:past2} can be estimated differently than shown above.
		Namely, due to the convexity of the functional $F$, the following inequality takes place:
		$$
		F(v) - F(u) \leq -\langle F'(v), u-v\rangle.
		$$
		Therefore, using the norm $\|F'(v)\|_*$, we get
		\begin{equation}\label{eq:past3}
		\frac{1}{p} 2^{1-p} \|\nabla (u-v)\|_p^p  \leq \|F'(v)\|_* \|\nabla (u-v)\|_p,
		\end{equation}
		which is analogous to the first estimate in \eqref{eq:thm:1}, but with a rougher constant.
		From here on, the proof of \cite[Theorem~3.3]{P1} can be simplified by taking into account the derivation of the estimate \eqref{eq:thm1:proof:1}. 
		
		This discussion leads to an interesting question about the comparison of the majorants $M(v,q^*)$ and $\|F'(v)\|_*$, which is not addressed in the present work.
	\end{remark}
	
		\begin{remark}\label{rem:assumptions-Poisson}
		Note that $\|F'(v)\|_* \to 0$ if $v \to u$ strongly in $\W$.
		This follows from the equality $F'(u) = 0$ in $W^{-1,p'}(\Omega)$ and the application of Lemma~\ref{lem:flows1} to the estimate
			\begin{align}
				&\|F'(v)\|_* 
				=
				\|F'(u) - F'(v)\|_* 
				\\
				&= 
				\sup_{\xi \in \W \setminus \{0\}} \frac{|\intO (|\nabla u|^{p-2} \nabla u - |\nabla v|^{p-2} \nabla v) \cdot \nabla \xi \,dx|}{\|\nabla \xi\|_p}
				\leq
				\||\nabla u|^{p-2} \nabla u - |\nabla v|^{p-2} \nabla v\|_{p'}.
			\end{align}
		Thus, the estimates via $\|F'(v)\|_*$ in Theorems~\ref{thm:1} and \ref{thm:2} are consistent a posteriori estimates.
		Moreover, it is easy to see that the estimates via $\|F'(v)\|_*$ remain valid if we only assume $h \in W^{-1,p'}(\Omega)$ and use the more general energy estimate $\|\nabla u\|_p^p \leq \|h\|_*^{p'}$ instead of \eqref{eq:energy-id1}.
	\end{remark}
	
	It has already been noted that the quantity $\|\nabla (u-v)\|_p$ controls the deviation of the corresponding fluxes.
	Namely, combining Lemma~\ref{lem:flows1} (taking into account the energy estimate \eqref{eq:energy-id1} in the case $p > 2$) and Theorems~\ref{thm:1}, \ref{thm:2},
	we obtain an a posteriori control of the norm $\||\nabla u|^{p-2} \nabla u - |\nabla v|^{p-2} \nabla v\|_{p'}$ for any function $v \in \W$ with explicit constants.
	
	\medskip
	The quantity $\|\nabla (u-v)\|_p$ can also be estimated from below in the following constructive way, using the convexity of the functional $F$.
	\begin{proposition}\label{prop:lower-bound1}
		Let $p>1$. 
		Then for the solution $u$ of \eqref{eq:weak1} and any $v \in \W$ the following assertions hold:
		\begin{enumerate}[label={\rm(\roman*)}]
			\item\label{lem:lower-bound1:1}
			If $p \leq 2$, then
		\begin{equation}\label{eq:prop:lower-bound1}
			\|\nabla (u-v)\|_p^p
			\geq
			2^{p-2} \sup_{w \in \W}(F(v) - F(w)) \geq 0.
		\end{equation}
		\item\label{lem:lower-bound1:2}
		If $p \geq 2$, then
		\begin{equation}\label{eq:prop:lower-bound2}
			\|\nabla (u-v)\|_p^2
			\geq
			(p-1)^{-1} \, 2^{\frac{2-p}{p}}
			\left(C_F^{p'} \|h\|_{p'}^{p'} + \|\nabla v\|_p^p\right)^{\frac{2-p}{p}} \sup_{w \in \W}(F(v) - F(w)) \geq 0.
		\end{equation}
		\end{enumerate}
	\end{proposition}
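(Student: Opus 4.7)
The proof starts by observing that, since $u$ is the unique global minimizer of $F$ on $\W$, one has $\sup_{w \in \W}(F(v)-F(w)) = F(v)-F(u) \geq 0$. Thus both (i) and (ii) reduce to bounding $F(v) - F(u)$ from above in terms of $\|\nabla(u-v)\|_p$. Convexity of $F$ immediately gives $F(v) - F(u) \leq -\langle F'(v), u-v\rangle$, and since $F'(u)=0$ in $W^{-1,p'}(\Omega)$ by the weak formulation \eqref{eq:weak1}, this can be symmetrized to
$$F(v) - F(u) \leq \intO (|\nabla u|^{p-2}\nabla u - |\nabla v|^{p-2}\nabla v)\cdot\nabla(u-v)\,dx.$$
The rest is pointwise algebra on the integrand.

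For part (i) with $p \in (1,2]$, the plan is to invoke the well-known companion of \eqref{eq:algebr-ineq1},
$$(|b|^{p-2}b - |a|^{p-2}a)\cdot(b-a) \leq 2^{2-p}|b-a|^p, \quad a,b \in \mathbb{R}^N,$$
which follows from the $(p-1)$-Hölder continuity of $x \mapsto |x|^{p-2}x$ on $\mathbb{R}^N$ with sharp constant $2^{2-p}$ (see \cite[Chapter~12]{Lind}) together with Cauchy--Schwarz. Pointwise application and integration yield $F(v) - F(u) \leq 2^{2-p}\|\nabla(u-v)\|_p^p$, which is \eqref{eq:prop:lower-bound1}.

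For part (ii) with $p \geq 2$, I would use the complementary upper bound
$$(|b|^{p-2}b - |a|^{p-2}a)\cdot(b-a) \leq (p-1)\max(|a|,|b|)^{p-2}|b-a|^2.$$
This follows from the mean-value representation $|b|^{p-2}b - |a|^{p-2}a = \int_0^1 D(|c|^{p-2}c)|_{c=a+s(b-a)}\,(b-a)\,ds$ combined with the elementary fact that the Jacobian $D(|c|^{p-2}c) = |c|^{p-2}I + (p-2)|c|^{p-4}c\otimes c$ has operator norm $(p-1)|c|^{p-2}$ for $p \geq 2$, together with the convexity estimate $|(1-s)a+sb| \leq \max(|a|,|b|)$. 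Applying Hölder's inequality with conjugate exponents $p/(p-2)$ and $p/2$, followed by the pointwise inequality $\max(|\nabla u|,|\nabla v|)^p \leq |\nabla u|^p + |\nabla v|^p$ and the energy estimate \eqref{eq:energy-id1}, gives
$$F(v) - F(u) \leq (p-1)\bigl(C_F^{p'}\|h\|_{p'}^{p'} + \|\nabla v\|_p^p\bigr)^{(p-2)/p}\|\nabla(u-v)\|_p^2,$$
which implies \eqref{eq:prop:lower-bound2} (in fact slightly stronger, since the factor $2^{(p-2)/p} \geq 1$ in the stated bound is not needed).

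The main obstacle is selecting the right pointwise inequality in part (ii). The more common form of the bound uses $(|a|+|b|)^{p-2}$ in place of $\max(|a|,|b|)^{p-2}$, and the subsequent application of Hölder then forces the Jensen-type estimate $(|a|+|b|)^p \leq 2^{p-1}(|a|^p+|b|^p)$, which for $p > 2$ introduces a large spurious factor $2^{(p-1)(p-2)/p}$ and yields a bound weaker than the one stated. Replacing $(|a|+|b|)$ by $\max(|a|,|b|)$ cleanly avoids this, because $\max(|a|,|b|)^p \leq |a|^p + |b|^p$ holds with constant $1$ for every $p \geq 1$.
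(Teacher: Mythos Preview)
Your proof is correct. The reduction to bounding $\intO(|\nabla u|^{p-2}\nabla u-|\nabla v|^{p-2}\nabla v)\cdot\nabla(u-v)\,dx$ from above is exactly what the paper does (there via pointwise convexity of $s\mapsto|s|^p$, which is the same inequality $F(v)-F(u)\le -\langle F'(v),u-v\rangle$ written integrandwise), and your treatment of part~(i) coincides with the paper's.

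For part~(ii) your route genuinely differs. The paper works with the bound $||b|^{p-2}b-|a|^{p-2}a|\le(p-1)(|a|+|b|)^{p-2}|b-a|$, applies H\"older, and then invokes $(1+|t|)^p\le 2(1+|t|^p)$ to pass from $\intO(|\nabla u|+|\nabla v|)^p\,dx$ to $2(\|\nabla u\|_p^p+\|\nabla v\|_p^p)$. As you observed, that last elementary inequality \emph{fails} for $p>2$ (take $t=1$), so the paper's derivation does not actually produce the factor $2^{(p-2)/p}$ it claims; the honest constant from that route would be $2^{(p-1)(p-2)/p}$. Your replacement of $(|a|+|b|)^{p-2}$ by $\max(|a|,|b|)^{p-2}$, justified via the operator norm of $D(|c|^{p-2}c)$ and the convexity bound $|(1-s)a+sb|\le\max(|a|,|b|)$, followed by the exact inequality $\max(|a|,|b|)^p\le|a|^p+|b|^p$, cleanly sidesteps this and yields the stated estimate---indeed a slightly sharper one, with no factor of $2$ at all.
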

	\begin{proof}
		For any $p>1$, using the convexity of $s \mapsto |s|^p$, we get
		$$
		|b|^p \geq |a|^p + p |a|^{p-2} a \cdot (b-a), \quad a,b \in \mathbb{R}^N,
		$$
		and consequently, for any $v \in \W$, we have 
		\begin{equation}\label{eq:convexity-2x}
			-\intO |\nabla v|^{p-2} \nabla v \cdot \nabla (u-v) \,dx 
			\geq
			\frac{1}{p} \|\nabla v\|_p^p 
			-
			\frac{1}{p} \|\nabla u\|_p^p.
		\end{equation}
		Therefore, we obtain
		\begin{align}
		\intO (|\nabla u|^{p-2} \nabla u &- |\nabla v|^{p-2} \nabla v) \cdot \nabla (u-v) \,dx
		=
		\intO h(u-v) \,dx 
		- 
		\intO |\nabla v|^{p-2} \nabla v \cdot \nabla (u-v) \,dx\\
		\label{eq:main-ineq1}
		&\geq
		\intO h(u-v) \,dx 
		+
		\frac{1}{p} \|\nabla v\|_p^p 
		-
		\frac{1}{p} \|\nabla u\|_p^p
		=
		F(v) - F(u) \geq F(v) - F(w),
		\end{align}
		where the last inequality holds for any $w \in \W$, since $u$ is a global minimizer of $F$.	
		
		Now we will use the same inequalities as in the proof of Lemma~\ref{lem:flows1}.
		In the case $p \leq 2$, \eqref{eq:algebr4-0} implies
		$$
		(|b|^{p-2} b - |a|^{p-2}a) \cdot (b-a) 
		\leq
		||b|^{p-2} b - |a|^{p-2}a| \, |b-a|
		\leq 2^{2-p} |b-a|^p, \quad a, b \in \mathbb{R}^N.
		$$
		Combining with \eqref{eq:main-ineq1}, we deduce that		
		\begin{align}\label{eq:energy-equiv1}
		2^{2-p} \|\nabla (u-v)\|_p^p 
		&\geq 
		\intO (|\nabla u|^{p-2} \nabla u - |\nabla v|^{p-2} \nabla v) \cdot \nabla (u-v) \,dx \geq F(v) - F(w),
		\end{align}
		which yields \eqref{eq:prop:lower-bound1}.
		
	 In the case $p \geq 2$, \eqref{eq:algebr4} gives (cf.~\eqref{eq:algebr-ineq2})
		\begin{equation}\label{eq:algebr4x}
			(|b|^{p-2} b - |a|^{p-2}a) \cdot (b-a) 
			\leq
			||b|^{p-2} b - |a|^{p-2} a| \, |b-a| \leq (p-1) (|a|+|b|)^{p-2} |b-a|^2, \quad a,b \in \mathbb{R}^N.
		\end{equation}
	Consequently using \eqref{eq:algebr4x}, the H\"older inequality, and $(1+|t|)^p \leq 2 (1+|t|^p)$, we deduce the estimate
		\begin{align}
			&\intO (|\nabla u|^{p-2} \nabla u - |\nabla v|^{p-2} \nabla v) \cdot \nabla (u-v) \,dx\\
			&\leq
			(p-1) \intO (|\nabla u| + |\nabla v|)^{p-2} |\nabla (u-v)|^2 \,dx\\
			&\leq
			(p-1)
			\left(\intO (|\nabla u| + |\nabla v|)^{p} \,dx\right)^{\frac{p-2}{p}}
			\left( \intO |\nabla (u-v)|^{p} \,dx\right)^{\frac{2}{p}}\\
			\label{eq:lem:flows1:2x}
			&\leq 
			(p-1) 2^{\frac{p-2}{p}}
			\left(\intO |\nabla u|^p \,dx + \intO |\nabla v|^{p}  \,dx\right)^{\frac{p-2}{p}}
			\left( \intO |\nabla (u-v)|^{p} \,dx\right)^{\frac{2}{p}}.
		\end{align}
	Combining \eqref{eq:lem:flows1:2x}, \eqref{eq:energy-id1}, and \eqref{eq:main-ineq1}, we get
	 $$
	 (p-1) 2^{\frac{p-2}{p}}
	 \left(C_F^{p'} \|h\|_{p'}^{p'} + \|\nabla v\|_p^p\right)^{\frac{p-2}{p}}
	 \|\nabla (u-v)\|_p^2
	 \geq 
	 F(v) - F(w),
	 $$
	 which implies \eqref{eq:prop:lower-bound2}.
	\end{proof}
	
	In the case $p=2$, a fact similar to Proposition~\ref{prop:lower-bound1} is proved in \cite[Theorem~3.6]{RepBook1}.
	It is easy to see that the supremum on the right-hand side
	of the estimates \eqref{eq:prop:lower-bound1}, \eqref{eq:prop:lower-bound2} is attained for $w=u$ and vanishes for $v=w$.
	
	\begin{remark}
		The results of this section can be easily generalized, e.g., to the problem
		\begin{equation}\label{eq:pois:gen}
			\left\{
			\begin{aligned}
				-\Delta_p u - a \Delta_q u + |u|^{r-2} u &= h \quad \text{in}~ \Omega,\\
				u &= 0 \quad \text{on}~ \partial\Omega,
			\end{aligned}
			\right.
		\end{equation}
		for any $a \geq 0$, $1<q<p$, $r \in (1, p^*)$, where $p^* := pN/(N-p)$ for $p < N$ and $p^* := +\infty$ for $p \geq N$. 
		The functional space is $\W$, and the monotonicity of the left-hand side of the problem \eqref{eq:pois:gen} is subject to the monotonicity of the $p$-Laplacian.
	\end{remark}

	\section{Obstacle problem}\label{sec:obstacle}	
	
	Again, as in \eqref{eq:F}, we consider the functional
	$$
	F(v) = \frac{1}{p}\intO |\nabla v|^p \,dx 
	- 
	\intO h v \,dx, 
	$$
	but minimize it not on the whole $\W$, but on a closed convex set
	$$
	K = 
	\{v \in \W:~ v \geq \phi ~\text{a.e.\ in}~ \Omega\},
	$$
	where $\phi \in L^1(\Omega)$ is such that $K \neq \emptyset$. 
	The function $\phi$ is usually called an obstacle.
	It is known that the functional $F$ on $K$ has a unique minimizer $u \in K$ and it satisfies the variational (integral) inequality
	\begin{equation}\label{eq:F'>0}
	\langle F'(u), w-u \rangle 
	= 
	\intO |\nabla u|^{p-2} \nabla u \cdot \nabla (w-u) \,dx - \intO h(w-u) \,dx \geq 0, \quad w \in K.
	\end{equation}
	If $\langle F'(u), u \rangle = 0$, then $u$ satisfies the energy estimate \eqref{eq:energy-id1}. 
	For instance, if $K$ is a cone with apex at zero, then the choice $w=0$ and $w=2u$ in \eqref{eq:F'>0} yields $\langle F'(u), u \rangle = 0$. 
	This equality is also valid under the assumption $\phi \leq 0$ and $h \geq 0$ in $\Omega$,
	since the solution is non-negative by the maximum principle, and therefore the choice $w=0$ and $w=2u$ in \eqref{eq:F'>0} is possible.
	We present the following result about estimates on $u$, which is valid in the general case.
	\begin{lemma}\label{lem:energy-est}
		Let $p>1$. Then for any $w \in K$ the solution $u \in K$ of \eqref{eq:F'>0} satisfies the estimates
		\begin{align}
			\label{eq:upper-bound-on-u2}
			&\|\nabla u\|_p 
			\leq 
			C_1 :=
			\max\{(p C_F \|h\|_{p'}+1)^{\frac{1}{p-1}}, 
			p F(w)\},\\
			\label{eq:upper-bound-on-u2x}
			&\|\nabla u\|_p 
			\leq 
			C_2 :=
			\frac{1}{(1-\varepsilon (1+C_F^p))^{1/p}}
			\left(
			\frac{1}{\varepsilon^{p-1} } \|\nabla w\|_p^p
			+
			\frac{1}{\varepsilon^{\frac{1}{p-1}}} \|h\|_{p'}^{p'}
			+
			\|h\|_{p'} \|w\|_p
			\right)^{1/p},
		\end{align}
		where $\varepsilon \in (0,(1+C_F^p)^{-1})$ is arbitrary.
	\end{lemma}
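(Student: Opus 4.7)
The two estimates call for two different arguments. Part (i) exploits the variational (energy-minimization) characterization of $u$, while part (ii) uses the variational inequality \eqref{eq:F'>0} tested against $w$ itself.

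For the first estimate \eqref{eq:upper-bound-on-u2}, I would start from the fact that $u$ minimizes $F$ on $K$, so that $F(u) \leq F(w)$. Written out, this gives
\[
\tfrac{1}{p}\|\nabla u\|_p^p \leq F(w) + \intO hu\,dx \leq F(w) + C_F \|h\|_{p'} \|\nabla u\|_p,
\]
where the last step uses Hölder and the Friedrichs inequality. Setting $t := \|\nabla u\|_p$, this reads $t^p - pC_F\|h\|_{p'}\,t \leq pF(w)$. The explicit bound now follows by a dichotomy. Either $t \leq (pC_F\|h\|_{p'}+1)^{1/(p-1)}$, which is exactly the first alternative in the $\max$, or $t^{p-1} > pC_F\|h\|_{p'}+1$, in which case $t^p - pC_F\|h\|_{p'}\,t = t(t^{p-1} - pC_F\|h\|_{p'}) > t$ and thus $t < pF(w)$.

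For the second estimate \eqref{eq:upper-bound-on-u2x}, I would test \eqref{eq:F'>0} against the given $w \in K$. Rearranging,
\[
\|\nabla u\|_p^p \leq \intO |\nabla u|^{p-2}\nabla u \cdot \nabla w\,dx + \intO h u\,dx - \intO h w\,dx.
\]
Hölder bounds the flux term by $\|\nabla u\|_p^{p-1}\|\nabla w\|_p$; Hölder plus Friedrichs controls $\intO h u\,dx$; and the trivial estimate gives $|\intO h w\,dx| \leq \|h\|_{p'}\|w\|_p$. I would then apply Young's inequality with scaling parameter $\varepsilon$ separately to the two remaining mixed terms: writing $\|\nabla u\|_p^{p-1}\|\nabla w\|_p \leq \varepsilon\|\nabla u\|_p^p + c_1\varepsilon^{-(p-1)}\|\nabla w\|_p^p$, and applying Young first to $\|h\|_{p'}\|u\|_p$ (giving $\varepsilon\|u\|_p^p + c_2 \varepsilon^{-1/(p-1)}\|h\|_{p'}^{p'}$) before one further invocation of Friedrichs turns $\|u\|_p^p$ into $C_F^p\|\nabla u\|_p^p$. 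Absorbing both $\|\nabla u\|_p^p$ contributions to the left produces the coefficient $1 - \varepsilon(1+C_F^p)$, positive under the hypothesis $\varepsilon \in (0,(1+C_F^p)^{-1})$. The constants $c_1,c_2$ arising from Young are at most one and can be harmlessly replaced by $1$ to give exactly the form of $C_2$.

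There is no serious conceptual obstacle: the calculation is elementary throughout. The only point that deserves care is in part (ii): to recover the factor $C_F^p$ (rather than $C_F^{p'}$) in the absorbed coefficient, Young's inequality must be applied to $\|h\|_{p'}\|u\|_p$ in the form that isolates $\|u\|_p^p$ on the gradient side, with the Friedrichs inequality only invoked \emph{after} Young; the same scaling then produces the correct exponents $-(p-1)$ and $-1/(p-1)$ on $\varepsilon$ in the two remaining terms.
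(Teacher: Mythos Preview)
Your proposal is correct and follows essentially the same route as the paper: part~(i) via $F(u)\le F(w)$ followed by the same dichotomy on $t=\|\nabla u\|_p$, and part~(ii) via testing \eqref{eq:F'>0} with $w$, H\"older, Young's inequality (applied to $\|h\|_{p'}\|u\|_p$ \emph{before} Friedrichs, exactly as you note), and absorption. The only cosmetic difference is that the paper phrases the dichotomy as positivity of $t\mapsto t^p-At-B$ for $t>\max\{(A+1)^{1/(p-1)},B\}$, which is your argument restated.
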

	\begin{proof}
		Let us prove \eqref{eq:upper-bound-on-u2}.
		Since $u, w \in K$ and $u$ is a minimizer of $F$ on $K$, we have $F(u) \leq F(w)$, which implies that
		\begin{equation}\label{eq:upper-bound-on-u}
			\|\nabla u\|_p^p 
			\leq 
			p \intO h u \,dx
			+
			p F(w) 
			\leq
			p C_F \|h\|_{p'} \, \|\nabla u\|_p
			+
			p F(w).
		\end{equation}
		Analyzing the function $t \mapsto t^p - A t - B$, where $t, A, B \geq 0$, it is not hard to see that it is positive for any $t > \max\{(A+1)^{\frac{1}{p-1}},B\}$. 
		Therefore, \eqref{eq:upper-bound-on-u} implies \eqref{eq:upper-bound-on-u2}. 
		
		Let us prove \eqref{eq:upper-bound-on-u2x}.
		Using the H\"older and Friedrichs inequalities, as well as Young's inequalities
		$$
		ab = (p' \varepsilon)^{1/p'}a \, \frac{b}{(p' \varepsilon)^{1/p'}} \leq \varepsilon |a|^{p'} + \frac{|b|^{p}}{p (p' \varepsilon)^{p-1}},
		\quad
		ab = \frac{a}{(p \varepsilon)^{1/p}} \, (p \varepsilon)^{1/p} b \leq \frac{|a|^{p'}}{p' (p \varepsilon)^{\frac{1}{p-1}}} + \varepsilon |b|^{p}, 
		$$
		where $a,b \in \mathbb{R}$ and $\varepsilon>0$, 
		we deduce from \eqref{eq:F'>0} the following chain of inequalities:
		\begin{align*}
			\intO |\nabla u|^{p} \,dx 
			&\leq 
			\intO
			|\nabla u|^{p-2} \nabla u \cdot \nabla w \,dx - \intO h(w-u) \,dx
			\\
			&\leq
			\|\nabla u\|_p^{p-1} \|\nabla w\|_p
			+
			\|h\|_{p'} (\|u\|_p + \|w\|_p)
			\\
			&\leq
			\varepsilon \|\nabla u\|_p^p
			+
			\frac{1}{p (p' \varepsilon)^{p-1} } \|\nabla w\|_p^p
			+
			\frac{1}{p' (p \varepsilon)^{\frac{1}{p-1}} } \|h\|_{p'}^{p'}
			+
			\varepsilon C_F^p \|\nabla u\|_p^p
			+
			\|h\|_{p'} \|w\|_p.
		\end{align*}
	Assuming that $\varepsilon \in (0,(1+C_F^p)^{-1})$, we get
	\begin{equation}\label{eq:est1}
		\|\nabla u\|_p^p
		\leq
		\frac{1}{1-\varepsilon (1+C_F^p)}
		\left(
		\frac{1}{p (p' \varepsilon)^{p-1} } \|\nabla w\|_p^p
		+
		\frac{1}{p' (p \varepsilon)^{\frac{1}{p-1}} } \|h\|_{p'}^{p'}
		+
		\|h\|_{p'} \|w\|_p
		\right).
	\end{equation}
	Roughening the denominators on the right-hand side of \eqref{eq:est1} for simplicity, we derive \eqref{eq:upper-bound-on-u2x}.
	\end{proof}

	The advantage of the estimate \eqref{eq:upper-bound-on-u2x} is that its derivation does not use the functional $F$. Therefore, it can be extended to cases where the operator of a problem is not potential (see, e.g., Section~\ref{sec:px-Poisson}).
	Note also that if $\phi_+ \in W_0^{1,p}(\Omega)$, then one can choose $w=\phi_+$ in the estimates of Lemma~\ref{lem:energy-est}.
	
	\medskip
	For any $v \in K$, we define the quantity (see~\eqref{eq:K*})
	\begin{equation}\label{eq:F'K}
	\|F'(v)\|_{*,K} 
	=
	\sup_{w \in K \setminus \{v\}} \frac{(-\langle F'(v), w - v \rangle)_+}{\|\nabla (w - v)\|_p}
	\end{equation}
	and denote
	$$
	\Omega^v_\phi = \{x \in \Omega:~ v(x)=\phi(x)\}, 
	\quad 
	\Omega_0 = \Omega \setminus \Omega^v_\phi.
	$$
	
	\begin{theorem}\label{thm:obstacle}
		Let $p>1$.
		Then for the solution $u$ of \eqref{eq:F'>0} and any $v \in K$ the following estimates hold:	
		\begin{align}
			\label{eq:obstacle-main:p>2}
			2^{2-p} \|\nabla (u-v)\|_p^{p-1}
			+
			\frac{\langle F'(u), v-u \rangle}{\|\nabla (u-v)\|_p}
			&\leq
			\|F'(v)\|_{*,K},   \quad \text{if}~ p \geq 2,\\
			\label{eq:obstacle-main:p<2}
			\frac{(p-1) \, 2^{\frac{p-2}{p}}}{\left(C_i^p + \|\nabla v\|_p^p \right)^{\frac{2-p}{p}}} \|\nabla (u-v)\|_p
				+
			\frac{\langle F'(u), v-u \rangle}{\|\nabla (u-v)\|_p}
			&\leq
			\|F'(v)\|_{*,K},   \quad \text{if}~ p < 2,
		\end{align}
		and
		\begin{equation}\label{eq:obstacle-main:F}
			\|F'(v)\|_{*,K}
			\leq
			\inf_{\eta^* \in Q^*}
			\left(\|\tau^* - \eta^*\|_{p'} 
			+
			C_F \|\mathrm{div}\,\eta^* + h \|_{p',\Omega^v_0}
			+
			C_F \|(\mathrm{div}\,\eta^* + h)_+\|_{p',\Omega^v_\phi}
			\right),
		\end{equation}
	where $\tau^* := |\nabla v|^{p-2} \nabla v$ and $C_i > 0$ is any of the two constants defined in \eqref{eq:upper-bound-on-u2} and \eqref{eq:upper-bound-on-u2x} for a fixed $w \in W_0^{1,p}(\Omega)$. 
	The second term on the left-hand sides of \eqref{eq:obstacle-main:p>2} and \eqref{eq:obstacle-main:p<2} is assumed to be zero when $v=u$.
	
	In particular, since $\langle F'(u), v-u \rangle \geq 0$, we have
	\begin{equation}\label{eq:obstacle-main}
		\|\nabla (u-v)\|_p
		\leq
		C \|F'(v)\|_{*,K}^{\frac{1}{\max\{1,p-1\}}}, 
	\end{equation}	
	where 
	\begin{equation}
	C = 
		\begin{cases}
			2^{\frac{p-2}{p-1}},   &\text{if}~ p \geq 2,\\
			(p-1)^{-1} \, 2^{\frac{2-p}{p}} \left(C_i^p + \|\nabla v\|_p^p \right)^{\frac{2-p}{p}},   &\text{if}~ p < 2.
		\end{cases}
	\end{equation} 
	\end{theorem}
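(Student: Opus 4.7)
The plan is to follow the general framework of Section~\ref{sec:method}, combined with the algebraic inequalities and flux arguments used in Theorems~\ref{thm:1} and \ref{thm:2}, while carefully exploiting the obstacle constraint on the coincidence set $\Omega^v_\phi$.

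\textbf{Lower bounds on $\|F'(v)\|_{*,K}$.} First, since $u \in K$, plugging $w=u$ into the supremum defining $\|F'(v)\|_{*,K}$ in \eqref{eq:F'K} gives
\begin{equation*}
\|F'(v)\|_{*,K} \ \geq\ \frac{(-\langle F'(v), u-v\rangle)_+}{\|\nabla(u-v)\|_p}.
\end{equation*}
Next I apply the decomposition \eqref{eq:abstract:object}:
\begin{equation*}
-\langle F'(v), u-v\rangle \ =\ \langle F'(u)-F'(v),\, u-v\rangle \ +\ \langle F'(u),\, v-u\rangle.
\end{equation*}
The first term is non-negative by strict monotonicity of $-\Delta_p$, and the second is non-negative by the variational inequality \eqref{eq:F'>0} applied with $w=v \in K$; hence the $(\cdot)_+$ can be dropped. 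For $p\ge 2$, the pointwise inequality \eqref{eq:algebr-ineq1} yields $\langle F'(u)-F'(v),u-v\rangle \ge 2^{2-p}\|\nabla(u-v)\|_p^p$, and dividing by $\|\nabla(u-v)\|_p$ produces \eqref{eq:obstacle-main:p>2}. For $p<2$ I mimic the chain \eqref{eq:thm2:p<2} in the proof of Theorem~\ref{thm:2}, but replace the energy bound \eqref{eq:energy-id1} on $\|\nabla u\|_p^p$ by either of the constants $C_i^p$ from Lemma~\ref{lem:energy-est}, which is the natural substitute in the obstacle setting; this delivers \eqref{eq:obstacle-main:p<2}.

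\textbf{Upper bound on $\|F'(v)\|_{*,K}$.} For arbitrary $w \in K\setminus\{v\}$ and $\eta^* \in Q^*$, I write
\begin{equation*}
-\langle F'(v), w-v\rangle \ =\ \intO(\eta^* - \tau^*)\cdot\nabla(w-v)\,dx \ +\ \intO(\mathrm{div}\,\eta^* + h)(w-v)\,dx,
\end{equation*}
using integration by parts justified by $\eta^* \in Q^*$ and $w-v \in \W$. The first term is bounded by $\|\tau^*-\eta^*\|_{p'}\|\nabla(w-v)\|_p$ via H\"older. For the second I split $\Omega = \Omega_0 \cup \Omega^v_\phi$. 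On $\Omega_0$ the sign of $w-v$ is unknown, so a plain H\"older gives the bound $\|\mathrm{div}\,\eta^* + h\|_{p',\Omega_0}\|w-v\|_{p,\Omega_0}$. On $\Omega^v_\phi$, where $v=\phi$ and $w\ge\phi$, we have $w-v\ge 0$, which lets me replace the integrand by its positive part and gain the localized bound $\|(\mathrm{div}\,\eta^*+h)_+\|_{p',\Omega^v_\phi}\|w-v\|_{p,\Omega^v_\phi}$. Applying the Friedrichs inequality to convert $\|w-v\|_{p,D}$ into $C_F\|\nabla(w-v)\|_p$, dividing through by $\|\nabla(w-v)\|_p$, and then taking the supremum over $w$ and the infimum over $\eta^*$, I obtain \eqref{eq:obstacle-main:F}.

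\textbf{Final inequality.} Dropping the non-negative term $\langle F'(u),v-u\rangle/\|\nabla(u-v)\|_p$ on the left of \eqref{eq:obstacle-main:p>2} and \eqref{eq:obstacle-main:p<2} and solving the resulting inequality for $\|\nabla(u-v)\|_p$ yields \eqref{eq:obstacle-main} with the stated constant $C$.

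\textbf{Main obstacle.} The subtle point is the upper bound on $\|F'(v)\|_{*,K}$: on the contact set $\Omega^v_\phi$ the residual $\mathrm{div}\,\eta^* + h$ need not have a definite sign, and only the structural fact $w-v\ge 0$ there allows one to keep just its positive part. This mirrors the role of the obstacle constraint in the variational inequality itself and is what distinguishes the estimate from the unrestricted Poisson case, where $\Omega_\phi^v$ is effectively empty and \eqref{eq:obstacle-main:F} reduces to the second bound in Theorem~\ref{thm:1}. The $p<2$ lower bound also requires the non-variational a priori estimate \eqref{eq:upper-bound-on-u2x}, since the variational one relies on $\langle F'(u), u\rangle=0$, which fails in general for the obstacle problem.
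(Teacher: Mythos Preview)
Your proposal is correct and follows essentially the same route as the paper: the decomposition $-\langle F'(v),u-v\rangle=\langle F'(u)-F'(v),u-v\rangle+\langle F'(u),v-u\rangle$, the algebraic inequalities \eqref{eq:algebr-ineq1} and \eqref{eq:algebr-ineq2} (with $C_i$ from Lemma~\ref{lem:energy-est} replacing the energy bound in the $p<2$ case), and the splitting $\Omega=\Omega_0^v\cup\Omega_\phi^v$ exploiting $w-v\ge 0$ on $\Omega_\phi^v$ are exactly the ingredients the paper uses. One minor inaccuracy in your closing commentary: the theorem allows \emph{either} constant $C_1$ or $C_2$ from Lemma~\ref{lem:energy-est}, and $C_1$ in \eqref{eq:upper-bound-on-u2} is obtained from $F(u)\le F(w)$ rather than from $\langle F'(u),u\rangle=0$, so it too is available here---it is only the Poisson energy estimate \eqref{eq:energy-id1} that genuinely fails.
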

	\begin{proof}
	For any $v \in K$, we estimate from below and from above the expression
	\begin{align}
	-\langle F'(v), u-v \rangle
	&=
	-\intO |\nabla v|^{p-2} \nabla v \cdot \nabla (u-v) \,dx + \intO h (u-v) \,dx
	\\
	&=
	\intO (|\nabla u|^{p-2} \nabla u - |\nabla v|^{p-2} \nabla v) \cdot (\nabla u - \nabla v) \,dx
	+
	\langle F'(u), v-u \rangle.
	\end{align}
	In the case $p \geq 2$, similarly to the derivation of the estimate \eqref{eq:thm1:proof:0} we have
	\begin{equation}\label{eq:thm3:proof:0}
	-\langle F'(v), u-v \rangle
	\geq
	2^{2-p} \|\nabla (u-v)\|_p^{p} +
	\langle F'(u), v-u \rangle.
	\end{equation}
	In the case $p<2$, the same reasoning as in the derivation of the estimate \eqref{eq:thm2:proof:0} also applies, except that instead of the energy estimate $\|\nabla u\|_p^p \leq C_F^{p'}\|h\|_{p'}^{p'}$ in \eqref{eq:thm2:p<2} (which might not be true for the obstacle problem), any bound $\|\nabla u\|_p^p \leq C_i^p$ from Lemma~\ref{lem:energy-est} is used.
	Thus, arguing as the proof of \eqref{eq:thm2:proof:0} and taking into account the additional term $\langle F'(u), v-u \rangle$, we obtain 
	\begin{equation}\label{eq:thm3:proof:0x}
		-\langle F'(v), u-v \rangle
		\geq
		\frac{(p-1) \, 2^{\frac{p-2}{p}}}{\left(C_i^p + \|\nabla v\|_p^p \right)^{\frac{2-p}{p}}} \|\nabla (u-v)\|_p^{2}
		+
		\langle F'(u), v-u \rangle.
	\end{equation}
	On the other hand, using the notation \eqref{eq:F'K}, we have the upper bound
	\begin{equation}\label{eq:thm1obst:proof:1x}
		-\langle F'(v), u-v \rangle \leq \|F'(v)\|_{*,K} \, \|\nabla (u-v)\|_p.
	\end{equation}
	Combining \eqref{eq:thm3:proof:0} (for $p \geq 2$), \eqref{eq:thm3:proof:0x} (for $p < 2$), and \eqref{eq:thm1obst:proof:1x}, we obtain \eqref{eq:obstacle-main:p>2} and \eqref{eq:obstacle-main:p<2}.	
	In order to get \eqref{eq:obstacle-main:F}, we will argue similarly to the derivation of the estimate \eqref{eq:thm1:proof:1}. 
	For any $p>1$ and arbitrary $w \in K$ and $\eta^* \in Q^*$, we have 
	\begin{align}
		&-\langle F'(v), w-v \rangle\\
		&=
		-\intO (|\nabla v|^{p-2} \nabla v - \eta^*) \cdot \nabla (w-v) \,dx 
		- \intO \eta^* \cdot \nabla (w-v)  \,dx 
		+ \intO h (w-v) \,dx
		\\
		&=
		-\intO (|\nabla v|^{p-2} \nabla v - \eta^*) \cdot \nabla (w-v) \,dx 
		+ 
		\intO (\text{div}\,\eta^* + h) (w-v) \,dx
		\\
		\notag
		&\leq
		-\intO (|\nabla v|^{p-2} \nabla v - \eta^*) \cdot \nabla (w-v) \,dx 
		+ \int_{\Omega^v_0} (\text{div}\,\eta^* + h) (w-v) \,dx 
		+ 
		\int_{\Omega^v_\phi} (\text{div}\,\eta^* + h)_+ (w-v) \,dx
		\\
		\label{eq:thm3:proof:1}
		&\leq
		\|\nabla (w-v)\|_p \left(\||\nabla v|^{p-2} \nabla v - \eta^*\|_{p'} 
		+
		C_F \|\text{div}\,\eta^* + h\|_{p',\Omega^v_0}
		+
		C_F \|(\text{div}\,\eta^* + h)_+\|_{p',\Omega^v_\phi}
		\right),
	\end{align}
	which yields \eqref{eq:obstacle-main:F}.
	Here, to derive the first inequality, we used the fact that $w \geq \phi = v$ in $\Omega^v_\phi$, and therefore
	$$
	\int_{\Omega^v_\phi} (\text{div}\,\eta^* + h) (w-v) \,dx
	\leq
	\int_{\Omega^v_\phi} (\text{div}\,\eta^* + h)_+ (w-v) \,dx.
	\qedhere
	$$
	\end{proof}
	
	\begin{remark}
		Suppose that $|\nabla u|^{p-2} \nabla u \in Q^*$ and 
		\begin{equation}\label{eq:obstacle-0}
			-\Delta_p u \geq h,
			\quad
			(\Delta_p u + h) (u-\phi) = 0
			\quad \text{a.e.\ in}~ \Omega.
		\end{equation}
		Then the right-hand side of the estimate \eqref{eq:obstacle-main:F} vanishes for $v=u$ and $\eta^* = |\nabla u|^{p-2} \nabla u$. 
		Some sufficient conditions on the data of the problem guaranteeing \eqref{eq:obstacle-0} can be found, in particular, in \cite{caselli,fuchs,rodrig} and \cite[Chapter~2]{lions}, see also Remark~\ref{rem:regQ}.
	\end{remark}

	\begin{remark}\label{rem:conv}
		In general, it is not guaranteed that the majorant in \eqref{eq:obstacle-main:F} tends to zero as $v \to u$ strongly in $\W$.
		It depends on the data of the problem and the choice of the sequence $\{v\}$.
		For example, suppose that \eqref{eq:obstacle-0} holds, $|\nabla u|^{p-2} \nabla u \in Q^*$, and $\eta^* = |\nabla u|^{p-2} \nabla u$. 
		Then, assuming $\Omega_\phi^v \supset \Omega_\phi^u$, we have
		$$
		\|\text{div}\,\eta^* + h\|_{p',\Omega^v_0}
		+
		\|(\text{div}\,\eta^* + h)_+\|_{p',\Omega^v_\phi} = 0,
		$$
		and $\||\nabla v|^{p-2} \nabla v - \eta^*\|_{p'} \to 0$ as $v \to u$ in $\W$ by Lemma~\ref{lem:flows1}. 
		Thus, the convergence to zero on the right-hand side of \eqref{eq:obstacle-main:F} takes place.
		On the other hand, if $\Omega_\phi^v \subset \Omega_\phi^u$ and $\Omega_0^v \cap \Omega_\phi^u \neq \emptyset$, then 
		$$
		\|\mathrm{div}\,\eta^* + h \|_{p',\Omega^v_0}
		+
		\|(\mathrm{div}\,\eta^* + h)_+\|_{p',\Omega^v_\phi}
		=
		\|\mathrm{div}\,\eta^* + h \|_{p',\Omega^v_0},
		$$
		and this expression may not tend to zero as $v \to u$ in $\W$, in general.
	\end{remark}

	\begin{remark}
		Combining Theorem~\ref{thm:obstacle} and Lemma~\ref{lem:flows1}, we obtain an a posteriori control of the deviation of the flux of any $v \in K$ from the flux of the solution $u$ of the problem \eqref{eq:F'>0} with explicitly calculated constants.
	\end{remark}

	\begin{remark}\label{rem:ANR}
	A result similar to Theorem~\ref{thm:obstacle} was obtained in \cite{ANR} using the duality theory methods.
	In particular, in the case $p > 2$, under stronger assumptions on the regularity of the obstacle $\phi$, with the choice $\eta^* = |\nabla v|^{p-2} \nabla v$, and for sufficiently small $\epsilon>0$, \cite[Theorem~2]{ANR} gives 
		\begin{equation}\label{eq:ANR}
		\left(c(p) - \epsilon^p \frac{C_F^p}{p}\right) \|\nabla (u-v)\|_p^p
		+
		\int_{\Omega^u_\phi} (\Delta_p \phi + h) (u-v) \,dx 
		+ 
		\lambda^*(\eta^*)
		\leq
		\frac{1}{p' \epsilon^{p'}} \|R_h(\mathbf{\eta^*})\|_{p'}^{p'}.
	\end{equation}
	Here, $c(p)>0$ is an explicitly estimated constant from a certain algebraic inequality characterizing the convexity of a power functional,
	\begin{align}
	\label{eq:lambda*}
	\lambda^*(\eta^*)
	&=
	\intO \left(\frac{1}{p} |\nabla u|^{p} + \frac{1}{p'} |\eta^*|^{p'} - \nabla u \cdot \eta^*\right) dx,\\
	R_h(\mathbf{\eta^*})
	&=
	\left\{
	\begin{aligned}
		&\text{div}\,\eta^* + h 	&&\text{on}~ \Omega_0^v,\\
		&(\text{div}\,\eta^* + h)_+ &&\text{on}~ \Omega_0^\phi.
	\end{aligned}
	\right.
	\end{align}
	At the same time, under the assumption \eqref{eq:obstacle-0} and with the same choice of $\eta^* = |\nabla v|^{p-2} \nabla v$, the estimates \eqref{eq:obstacle-main:p>2} and \eqref{eq:obstacle-main} from Theorem~\ref{thm:obstacle} take the form
	\begin{gather}\label{eq:BP10}
		2^{2-p} \|\nabla (u-v)\|_p^{p-1}
		+
		\frac{\int_{\Omega^u_\phi} (\Delta_p \phi + h)  (u-v) \,dx}{\|\nabla (u-v)\|_p}
		\leq 
		C_F
		\|R_h(\eta^*)\|_{p'},\\
		\label{eq:BP1x}
		\|\nabla (u-v)\|_p^p
		\leq
		2^{p' (p-2)}
		C_F^{p'}
		\|R_h(\eta^*)\|_{p'}^{p'}.
	\end{gather}
	The estimate \eqref{eq:BP10} can be multiplied by $\|\nabla (u-v)\|_p$ and then Young's inequality can be used on the right-hand side.
	The resulting estimate will differ from \eqref{eq:ANR} only by the term $\lambda^*(\eta^*)$, which determines a measure of deviation of the solution's flux from $\eta^* = |\nabla v|^{p-2} \nabla v$.
	However, by Lemma~\ref{lem:flows1}, the norm of deviation of the flux of $v$ from the flux of $u$ is estimated through the norm $\|\nabla (u-v)\|_p$.
	Note that the estimates of Theorem~\ref{thm:obstacle} are derived much simpler than \eqref{eq:ANR}, they do not require additional regularity of the obstacle, and uniformly cover any $p>1$.
	
	We also note that, due to the presence of normalization, the term $\frac{\int_{\Omega^u_\phi} (\Delta_p \phi + h) (u-v) \,dx}{\|\nabla (u-v)\|_p}$ in \eqref{eq:BP10} better reflects the relation between the coincidence sets $\Omega^u_\phi$ and $\Omega^v_\phi$ than the term $\int_{\Omega^u_\phi} (\Delta_p \phi + h) (u-v) \,dx$ in \eqref{eq:ANR}.
	In particular, the entire left-hand side of \eqref{eq:ANR} tends to zero for \textit{any} sequence $v \to u$ in $\W$, $\{v\} \subset K \setminus \{u\}$, while the quantity $\|R_h(\eta^*)\|_{p'}$ on the right-hand side of \eqref{eq:ANR} may not tend to zero, see Remark~\ref{rem:conv}.
	\end{remark}

	\section{Other variants of the Poisson problem}\label{sec:collection}
	\subsection{Anisotropic Poisson problem}\label{sec:px-Poisson}
	
	Let us consider the problem with an anisotropic $p$-Laplacian
	\begin{equation}\label{eq:anisotropic}
		\left\{
		\begin{aligned}
			-\text{div}\, (|\nabla u|^{p-2} \mathcal{A} \nabla u) &= h \quad \text{in}~ \Omega,\\
			u &= 0 \quad \text{on}~ \partial\Omega.
		\end{aligned}
		\right.
	\end{equation}
	Here, the matrix $\mathcal{A} = \mathcal{A}(x)$ is assumed to be measurable, symmetric, and such that
	\begin{equation}\label{eq:anisotropic:A}
	\nu |\xi|^2 
	\leq
	\lambda(x) |\xi|^2
	\leq 
	\mathcal{A}(x) \xi \cdot \xi 
	\leq
	\Lambda(x) |\xi|^2 
	\leq
	\nu^{-1} |\xi|^2, \quad \xi \in \mathbb{R}^N,~ x \in \Omega,
	\end{equation}
	for some $\nu>0$, where $\lambda = \lambda(x)$ and $\Lambda = \Lambda(x)$ are minimal and maximal eigenvalues of $\mathcal{A}$, respectively. 
	Let, in addition to \eqref{eq:anisotropic:A}, the exponent $p$ and the matrix $\mathcal{A}$ satisfy the following condition (a Cordes type condition):
	\begin{equation}\label{eq:anisotropin:assumption}
	\delta_\Omega(p,\mathcal{A}) 
	:= 
	\inf_{x \in \Omega} 
	\left(
	p - |p-2| \, \mu(\mathcal{A})
	\right) > 0,
	\end{equation}
	where
	$$
	\mu(\mathcal{A}) = \sup_{\xi \neq 0} \frac{|\mathcal{A} \xi| \, |\xi|}{\mathcal{A}\xi \cdot \xi}.
	$$
	In \cite[Theorem~1.4]{SZ}, it is shown that the assumption \eqref{eq:anisotropin:assumption} guarantees 
	$$
	(|b|^{p-2}\mathcal{A}(x)b - |a|^{p-2}\mathcal{A}(x)a) \cdot (b-a) 
	> 0,
	\quad a,b \in \mathbb{R}^N, ~a \neq b, ~ x \in \Omega,
	$$
	which implies the strict monotonicity of the operator,
	see also \cite[Lemma~3.2]{PY} and Appendix~\ref{sec:appendix}.
	Note that, in the general case, without additional assumptions on the symmetric positive-definite matrix $\mathcal{A}$, the monotonicity of the anisotropic $p$-Laplacian may be violated, see a constructive example in \cite[(1.17)]{SZ}.
	
	Strict monotonicity of the operator guarantees that the problem \eqref{eq:anisotropic} has the unique weak solution (see \cite[Theorem~3.1]{PY}).
	Recall that a weak solution to \eqref{eq:anisotropic} is a function $u \in \W$ satisfying 
	\begin{equation}\label{eq:anisotropic:weak}
		\intO |\nabla u|^{p-2} \mathcal{A}(x) \nabla u \cdot \nabla \xi \,dx
		=
		\intO h \xi \,dx 
		\quad \text{for any}~ \xi \in \W.
	\end{equation}
	The solution $u$ satisfies the energy estimate
	\begin{equation}\label{eq:energy-anis}
	\|\nabla u\|_p^p 
	\leq 
	\nu^{-p'} C_F^{p'} \|h\|_{p'}^{p'},
	\end{equation}
	which follows from
	$$
	\nu \|\nabla u\|_p^p \leq 
	\intO |\nabla u|^{p-2} \mathcal{A}(x) \nabla u \cdot \nabla u \,dx 
	= 
	\intO h u\,dx \leq \|h\|_{p'} \|u\|_{p}
	\leq
	C_F \|h\|_{p'} \|\nabla u\|_p.
	$$
	
	Let us denote the operator of the problem as $A: \W \to W^{-1,p'}(\Omega)$, so that \eqref{eq:anisotropic:weak} can be written as $\langle A u - h, \xi \rangle = 0$ for any $\xi \in \W$. 

	\begin{theorem}
		Let $p>1$ and the assumptions \eqref{eq:anisotropic:A} and \eqref{eq:anisotropin:assumption} be satisfied. 
		Then for the solution $u$ of \eqref{eq:anisotropic} and any $v \in \W$ the following estimate holds:			
		\begin{equation}\label{eq:thm:anisotropic}
			\|\nabla (u-v)\|_p
			\leq
			C\|Av-h\|_*^{\frac{1}{\max\{1,p-1\}}}
			\leq 
			C \inf_{\eta^* \in Q^*} \left(\|\tau^* - \eta^*\|_{p'} 
			+
			C_F \|\mathrm{div}\,\eta^* + h \|_{p'}
			\right)^{\frac{1}{\max\{1,p-1\}}},
		\end{equation}
		where $\tau^* := |\nabla v|^{p-2} \mathcal{A} \nabla v$ and 
		\begin{equation}
			C = 
			\begin{cases}
				\left(\frac{8 (p-1)}{\inf_{x \in \Omega}\lambda(x) \, \delta_\Omega(p,\mathcal{A})}\right)^{\frac{1}{p-1}},   &\text{if}~ p \geq 2,\\
				\frac{2 \left(\nu^{-p'} C_F^{p'}\|h\|_{p'}^{p'} + \|\nabla v\|_p^p \right)^{\frac{2-p}{p}}}{\inf_{x \in \Omega}\lambda(x) \, \delta_\Omega(p,\mathcal{A}) \, 2^{\frac{p-2}{p}}},   &\text{if}~ p < 2.
			\end{cases}
		\end{equation}
	\end{theorem}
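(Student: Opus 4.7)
The plan is to mirror the two-step strategy used in Theorems~\ref{thm:1}--\ref{thm:2}: produce a strict-monotonicity lower bound on $-\langle Av-h,u-v\rangle$ and a dual upper bound via an auxiliary flux $\eta^*\in Q^*$, then combine. The new ingredient compared with the isotropic case is the presence of the matrix $\mathcal A$, but the Cordes-type condition \eqref{eq:anisotropin:assumption} (as used in \cite{SZ,PY} and recalled in the appendix) is exactly what replaces the classical algebraic inequalities \eqref{eq:algebr-ineq1} and \eqref{eq:algebr-ineq2}. Specifically, I would invoke the weighted analogue of these inequalities to obtain, pointwise in $x$, that for $p\ge 2$
\[
\bigl(|b|^{p-2}\mathcal A(x)b-|a|^{p-2}\mathcal A(x)a\bigr)\cdot(b-a)\ \ge\ \frac{\lambda(x)\,\delta_\Omega(p,\mathcal A)}{8(p-1)}\,|b-a|^{p},
\]
and for $p<2$ the corresponding bound
\[
\bigl(|b|^{p-2}\mathcal A(x)b-|a|^{p-2}\mathcal A(x)a\bigr)\cdot(b-a)\ \ge\ \frac{\lambda(x)\,\delta_\Omega(p,\mathcal A)}{2}\,\frac{|b-a|^{2}}{(|a|+|b|)^{2-p}}.
\]

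With these at hand, I would rewrite the fundamental identity \eqref{eq:abstract:object} specialized to our operator as
\[
-\langle Av-h,u-v\rangle=\intO\bigl(|\nabla u|^{p-2}\mathcal A\nabla u-|\nabla v|^{p-2}\mathcal A\nabla v\bigr)\cdot\nabla(u-v)\,dx,
\]
since $u$ solves \eqref{eq:anisotropic:weak}. For $p\ge 2$, integrating the pointwise bound directly yields a lower estimate of order $\|\nabla(u-v)\|_p^{p}$. For $p<2$, I would argue as in \eqref{eq:thm2:p<2}: apply Hölder with exponents $2/p$ and $2/(2-p)$ to the decomposition $|\nabla(u-v)|^{p}=\bigl(|\nabla(u-v)|^{2}/(|\nabla u|+|\nabla v|)^{2-p}\bigr)^{p/2}(|\nabla u|+|\nabla v|)^{p(2-p)/2}$, then control $\intO(|\nabla u|+|\nabla v|)^p\,dx$ via $(1+|t|)^p\le 2(1+|t|^p)$ and the anisotropic energy estimate \eqref{eq:energy-anis}. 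This gives the $p<2$ lower bound in $\|\nabla(u-v)\|_p^{2}$ with the constant $\inf_\Omega\lambda\,\delta_\Omega/2$ divided by $2^{(2-p)/p}\bigl(\nu^{-p'}C_F^{p'}\|h\|_{p'}^{p'}+\|\nabla v\|_p^{p}\bigr)^{(2-p)/p}$, as demanded by the stated $C$.

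The matching upper estimate comes from the definition of $\|Av-h\|_*$ and a dual manipulation: for any $\xi\in\W$ and any $\eta^*\in Q^*$ write
\[
\langle Av-h,\xi\rangle=\intO(\tau^*-\eta^*)\cdot\nabla\xi\,dx-\intO(\mathrm{div}\,\eta^*+h)\,\xi\,dx,
\]
apply Hölder to the first integral and Hölder plus the Friedrichs inequality (with constant $C_F$) to the second, then take the supremum over $\xi$ with $\|\nabla\xi\|_p=1$. Combining with $-\langle Av-h,u-v\rangle\le\|Av-h\|_*\|\nabla(u-v)\|_p$ produces \eqref{eq:thm:anisotropic} after infimizing over $\eta^*\in Q^*$, and a final division by $\|\nabla(u-v)\|_p$ converts the $p$-th (respectively, square) power on the left into $\|\nabla(u-v)\|_p^{p-1}$ (respectively, $\|\nabla(u-v)\|_p$).

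The main obstacle is the first step: identifying and verifying the right weighted monotonicity inequality with explicit constants matching those declared in $C$. The isotropic proof reduces to the elementary inequalities \eqref{eq:algebr-ineq1}--\eqref{eq:algebr-ineq2} from \cite{Lind}; in the anisotropic setting one must carefully use the Cordes quantity $\delta_\Omega(p,\mathcal A)$ (which precisely measures how far the vector $|\xi|^{p-2}\mathcal A\xi$ is from being a gradient in the sense required by \cite[Theorem~1.4]{SZ}) and be diligent about where the eigenvalue $\lambda(x)$ enters versus $\Lambda(x)$ or $\nu$. Once this pointwise inequality is isolated and combined with the energy estimate \eqref{eq:energy-anis}, the remainder of the argument is a direct transcription of the proofs of Theorems~\ref{thm:1} and \ref{thm:2}.
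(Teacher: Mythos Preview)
Your proposal is correct and follows essentially the same route as the paper's proof: the paper also rewrites $-\langle Av-h,u-v\rangle$ as the monotonicity integral, invokes the anisotropic pointwise inequalities \eqref{eq:algebr-ineq1:anisotrop:p>2:1} and \eqref{eq:algebr-ineq1:anisotrop:p<2:1} from the appendix (with exactly the constants you quote), handles the $p<2$ case via the H\"older splitting of \eqref{eq:thm2:p<2} combined with the anisotropic energy estimate \eqref{eq:energy-anis}, and then bounds $\|Av-h\|_*$ by the same $\eta^*$-manipulation as in \eqref{eq:thm1:proof:1}. There is no substantive difference between your outline and the paper's argument.
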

	\begin{proof}
		Let us estimate from below and from above the expression 
		\begin{align}
		-\langle A v - h, u-v \rangle 
		&= 
		-\intO |\nabla v|^{p-2} \mathcal{A}(x) \nabla v \cdot \nabla (u-v) \,dx
		+
		\intO h (u-v) \,dx\\
		&=
		\intO (|\nabla u|^{p-2} \mathcal{A}(x) \nabla u - |\nabla v|^{p-2} \mathcal{A}(x) \nabla v) \cdot \nabla (u-v) \,dx.
		\end{align}
		In the case $p \geq 2$, using \eqref{eq:algebr-ineq1:anisotrop:p>2:1}, we get
		\begin{equation}\label{eq:anisotropic:proof:1}
			-\langle A v - h, u-v \rangle 
			\geq 
			\frac{\inf_{x \in \Omega}\lambda(x) \, \delta_\Omega(p,\mathcal{A})}{8 (p-1)} \, \|\nabla (u-v)\|_p^p.
		\end{equation}
		In the case $p < 2$, applying \eqref{eq:algebr-ineq1:anisotrop:p<2:1} and arguing as in the derivation of \eqref{eq:thm2:proof:0} with the energy estimate \eqref{eq:energy-anis}, we deduce
		\begin{equation}\label{eq:anisotropic:proof:1x}
		-\langle A v - h, u-v \rangle
		\geq
		\frac{\inf_{x \in \Omega}\lambda(x) \, \delta_\Omega(p,\mathcal{A}) \, 2^{\frac{p-2}{p}}}{2 \left(\nu^{-p'} C_F^{p'}\|h\|_{p'}^{p'} + \|\nabla v\|_p^p \right)^{\frac{2-p}{p}}} \, \|\nabla (u - v)\|_p^2.
		\end{equation}		
		On the other hand, we have the upper bound
		\begin{equation}\label{eq:thm1anisot:proof:1x}
			-\langle A v - h, u-v \rangle 
			\leq 
			\|A v - h\|_{*} \, \|\nabla (u-v)\|_p.
		\end{equation}
		Combining \eqref{eq:anisotropic:proof:1}, \eqref{eq:anisotropic:proof:1x}, and \eqref{eq:thm1anisot:proof:1x}, we get the first estimate in \eqref{eq:thm:anisotropic}:
		\begin{equation}\label{eq:thm1anicotr:proof:2x}
			C_0 \|\nabla (u-v)\|_p^{\max\{1,p-1\}}
			\leq
			\|Av-h\|_*,
		\end{equation}
		where 
		\begin{equation}
			C_0 = 
			\begin{cases}
				\frac{\inf_{x \in \Omega}\lambda(x) \, \delta_\Omega(p,\mathcal{A})}{8 (p-1)},   &\text{if}~ p \geq 2,\\
				\frac{\inf_{x \in \Omega}\lambda(x) \, \delta_\Omega(p,\mathcal{A}) \, 2^{\frac{p-2}{p}}}{2 \left(\nu^{-p'} C_F^{p'}\|h\|_{p'}^{p'} + \|\nabla v\|_p^p \right)^{\frac{2-p}{p}}},   &\text{if}~ p < 2.
			\end{cases}
		\end{equation}	
		In order to estimate $\|Av-h\|_*$, we take arbitrary $\xi \in \W$ and $\eta^* \in Q^*$ and, as in \eqref{eq:thm1:proof:1}, obtain
		\begin{align}
			|\langle A v - h, \xi \rangle|
			&\leq
			|\intO (|\nabla v|^{p-2} \mathcal{A}(x) \nabla v - \eta^*) \cdot \nabla \xi \,dx
			+
			\intO \eta^* \cdot \nabla \xi \,dx
			-
			\intO h \xi \,dx|\\
			&=
			|\intO (|\nabla v|^{p-2} \mathcal{A}(x) \nabla v - \eta^*) \cdot \nabla \xi \,dx
			-
			\intO (\text{div}\,\eta^* +h) \xi \,dx|\\
			&\leq
			|\intO (|\nabla v|^{p-2} \mathcal{A}(x) \nabla v - \eta^*) \cdot \nabla \xi \,dx|
			+
			|\intO (\text{div}\,\eta^* +h) \xi \,dx|\\
			&\leq
			\label{eq:anisotropic:proof:2}
			\|\nabla \xi\|_{p}
			\left(\||\nabla v|^{p-2} \mathcal{A} \nabla v - \eta^*\|_{p'} 
			+
			C_F \|\text{div}\,\eta^* +h\|_{p'} 
			\right).
		\end{align}
		Combining \eqref{eq:thm1anicotr:proof:2x} with \eqref{eq:anisotropic:proof:2}, we derive 
		$$
		\|Av-h\|_*
		\leq
		\||\nabla v|^{p-2} \mathcal{A} \nabla v - \eta^*\|_{p'} 
		+
		C_F \|\text{div}\,\eta^* +h\|_{p'},
		$$
		which yields the second part of the estimate \eqref{eq:thm:anisotropic}. 
	\end{proof}
	
	\begin{remark}
		Arguing as in Remark~\ref{rem:assumptions-Poisson}, it is not hard to see that if $v \to u$ strongly in $\W$, then $\|Av-h\|_* \to 0$ by Lemma~\ref{lem:flows-anisotrop}.
		Moreover, Lemma~\ref{lem:flows-anisotrop} implies that $\|\nabla (w-v)\|_{p}$ controls the difference of the corresponding anisotropic fluxes, as in the case of the identity matrix $\mathcal{A}$ considered in Section~\ref{sec:p-Poisson}.
	\end{remark}

\subsection{Poisson problem with Neumann boundary conditions}\label{sec:Neumann}

In Section~\ref{sec:p-Poisson}, the Dirichlet problem for the Poisson equation was studied.
Now we consider the Neumann problem for the Poisson equation: find $u \in W^{1,p}(\Omega)$ satisfying the normalization condition $\intO u \,dx = 0$ and the integral identity
\begin{equation}\label{eq:neumann:weak1}
	\intO |\nabla u|^{p-2} \nabla u \cdot \nabla \xi \,dx
	=
	\intO h \xi \,dx
	\quad \text{for any}~ \xi \in W^{1,p}(\Omega). 
\end{equation}
Here $\intO h \,dx = 0$, and $\Omega$ is bounded and sufficiently regular, for example, Lipschitz.
Such a function $u$ exists and is unique.

The following Poincar\'e inequality is known:
\begin{equation}\label{eq:poincare1}
	\|w\|_p \leq C_P \|\nabla w\|_p
	\quad \text{for any}~ w \in W^{1,p}(\Omega) ~\text{such that}~ \intO w \,dx = 0.
\end{equation}
Upper bounds on the constant $C_P$ for star-shaped $\Omega$ can be found, e.g., in \cite{FR}.
We also note the trace inequality 
\begin{equation}\label{eq:poincare}
\|w\|_{p,\partial \Omega} 
\leq C_T \|\nabla w\|_p
\quad \text{for any}~ w \in W^{1,p}(\Omega) ~\text{such that}~ \int_{\Omega} w \,dx = 0,
\end{equation}
see, e.g., \cite[Theorem~1.5.1.10]{grisvard}, from which one can also derive some estimates on $C_T$. 

For any $w \in W^{1,p}(\Omega)$, we denote by $t_w \in \mathbb{R}$ such a constant that 
$\intO (w+t_w)\,dx = 0$. 
Consider the norm
$$
\|F'(v)\|_{*,W^{1,p}} := \sup_{w \in W^{1,p}(\Omega) \setminus \{0\}, ~\intO w \,dx = 0} \frac{|\langle F'(v),w\rangle|}{\|\nabla w\|_p}
$$
and the space
\begin{equation}\label{eq:Q*1}
	Q^*_\nu = \{\eta \in L^{p'}(\Omega;\mathbb{R}^N):~ \text{div}\, \eta \in L^{p'}(\Omega), ~ \eta \cdot \nu \in L^{p'}(\partial\Omega)\},
\end{equation}
where $\nu$ is a unit outward normal vector to $\partial\Omega$. 

\begin{theorem}\label{thm:neumann}
	Let $p>1$.
	Then for the solution $u$ of \eqref{eq:neumann:weak1} and any $v \in W^{1,p}(\Omega)$ the following estimate holds:		
	\begin{align}
		&\|\nabla (u-v)\|_p
		\leq
		C \|F'(v)\|_{*,W^{1,p}}^{\frac{1}{\max\{1,p-1\}}}
		\\
		\label{eq:neumann-main}
		&\leq
		C
		\inf_{\eta^* \in Q^*_\nu}
		\left(
		\|\tau^* - \eta^*\|_{p'} 
		+
		C_P \|\mathrm{div}\,\eta^* + h\|_{p'}
		+
		C_T \|\eta^* \cdot \nu\|_{p',\partial \Omega}
		\right)^{\frac{1}{\max\{1,p-1\}}},
	\end{align}
	where $\tau^* := |\nabla v|^{p-2} \nabla v$ and 
	\begin{equation}
		C = 
		\begin{cases}
			2^{\frac{p-2}{p-1}},   &\text{if}~ p \geq 2,\\
			(p-1)^{-1} \, 2^{\frac{2-p}{p}} \left(C_P^{p'}\|h\|_{p'}^{p'} + \|\nabla v\|_p^p \right)^{\frac{2-p}{p}},   &\text{if}~ p < 2.
		\end{cases}
	\end{equation} 
\end{theorem}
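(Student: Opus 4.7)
The plan is to adapt the two-sided estimation of $-\langle F'(v), u-v\rangle$ from Theorems~\ref{thm:1} and~\ref{thm:2} to the Neumann setting, with the only essential novelty being the appearance of a boundary term after integration by parts. First, for any $v \in W^{1,p}(\Omega)$, testing \eqref{eq:neumann:weak1} with $\xi = u-v$ (which is permissible since $\xi \in W^{1,p}(\Omega)$ is arbitrary, not just mean-zero) yields the identity
$$
-\langle F'(v), u-v\rangle = \intO (|\nabla u|^{p-2}\nabla u - |\nabla v|^{p-2}\nabla v)\cdot\nabla(u-v)\,dx.
$$
The lower bound on this expression is exactly the one used in the Dirichlet case: use \eqref{eq:algebr-ineq1} when $p\geq 2$ to get $2^{2-p}\|\nabla(u-v)\|_p^p$ on the right, and use \eqref{eq:algebr-ineq2} together with Hölder and the Neumann energy estimate $\|\nabla u\|_p^p \leq C_P^{p'}\|h\|_{p'}^{p'}$ (obtained by testing \eqref{eq:neumann:weak1} with $\xi = u$ and applying \eqref{eq:poincare1}) when $p<2$ to get the factor $(p-1)\,2^{(p-2)/p}(C_P^{p'}\|h\|_{p'}^{p'}+\|\nabla v\|_p^p)^{-(2-p)/p}\|\nabla(u-v)\|_p^2$.

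Next, I would produce the upper bound $-\langle F'(v), u-v\rangle \leq \|F'(v)\|_{*,W^{1,p}} \|\nabla(u-v)\|_p$. The subtlety here, compared with the Dirichlet case, is that $u-v$ is not mean-zero in general, whereas the supremum in $\|F'(v)\|_{*,W^{1,p}}$ is taken only over mean-zero test functions. This is resolved by observing that $F'(v)$ annihilates constants: for any $c\in\mathbb{R}$ we have $\langle F'(v), c\rangle = -c\intO h\,dx = 0$ thanks to the compatibility condition $\intO h\,dx = 0$. Hence, with $t_{u-v}$ chosen so that $u-v+t_{u-v}$ has zero mean, $\langle F'(v), u-v\rangle = \langle F'(v), u-v+t_{u-v}\rangle$, and since $\|\nabla(u-v+t_{u-v})\|_p = \|\nabla(u-v)\|_p$, the desired bound follows. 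Combining with the lower bound gives the first inequality in \eqref{eq:neumann-main}, with the stated constant $C$.

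For the second inequality, take arbitrary $\xi \in W^{1,p}(\Omega)$ with $\intO\xi\,dx = 0$ and arbitrary $\eta^* \in Q^*_\nu$, and write
$$
\langle F'(v), \xi\rangle = \intO (|\nabla v|^{p-2}\nabla v - \eta^*)\cdot\nabla\xi\,dx + \intO \eta^*\cdot\nabla\xi\,dx - \intO h\xi\,dx.
$$
Integration by parts in the middle term produces $-\intO(\mathrm{div}\,\eta^*)\xi\,dx + \int_{\partial\Omega}(\eta^*\cdot\nu)\xi\,d\sigma$; this is where the boundary term appears, and the definition of $Q^*_\nu$ in \eqref{eq:Q*1} was designed precisely so that each of the three resulting integrals is well-defined. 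Applying Hölder to each piece, and then the Poincaré inequality \eqref{eq:poincare1} to bound $\|\xi\|_p$ and the trace inequality \eqref{eq:poincare} to bound $\|\xi\|_{p,\partial\Omega}$, both in terms of $\|\nabla\xi\|_p$, gives
$$
|\langle F'(v), \xi\rangle| \leq \|\nabla\xi\|_p\bigl(\|\tau^*-\eta^*\|_{p'} + C_P\|\mathrm{div}\,\eta^*+h\|_{p'} + C_T\|\eta^*\cdot\nu\|_{p',\partial\Omega}\bigr),
$$
from which the desired bound on $\|F'(v)\|_{*,W^{1,p}}$ follows by taking the supremum and then infimum in $\eta^*$.

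The only conceptual obstacle is the non-zero mean of $u-v$ in the duality pairing step, which is handled cleanly by the compatibility condition on $h$. Everything else is a direct transcription of the Dirichlet arguments from Section~\ref{sec:p-Poisson}, with the Friedrichs constant $C_F$ replaced by $C_P$ in the interior terms and with the additional trace contribution $C_T\|\eta^*\cdot\nu\|_{p',\partial\Omega}$ accounting for the loss of the zero-trace condition on admissible flux fields.
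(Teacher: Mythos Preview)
Your proposal is correct and follows essentially the same approach as the paper: both derive the identity for $-\langle F'(v),u-v\rangle$, bound it below via \eqref{eq:algebr-ineq1} or \eqref{eq:algebr-ineq2} (with $C_P$ replacing $C_F$ in the energy estimate), handle the mean-zero issue by invoking $\intO h\,dx=0$ to pass to $(u-v)+t_{u-v}$, and then estimate $\|F'(v)\|_{*,W^{1,p}}$ by integrating by parts against $\eta^*\in Q^*_\nu$ and applying \eqref{eq:poincare1} and \eqref{eq:poincare}. You have in fact made the mean-zero step more explicit than the paper does.
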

\begin{proof}
Let us estimate the expression
\begin{align}
-\langle F'(v),u-v \rangle
&=
-\intO |\nabla v|^{p-2} \nabla v \cdot \nabla (u-v) \,dx
+
\intO h (u-v) \,dx,\\
\label{eq:neumann:proof:0x}
&=
\intO (|\nabla u|^{p-2} \nabla u - |\nabla v|^{p-2} \nabla v) \cdot \nabla (u-v) \,dx.
\end{align}
The right-hand side of \eqref{eq:neumann:proof:0x} is estimated as in \eqref{eq:thm1:proof:0} for $p \geq 2$ and \eqref{eq:thm2:proof:0} for $p \leq 2$, with the only difference being that in \eqref{eq:thm2:proof:0} the Friedrichs constant $C_F$ is replaced by the constant $C_P$ from the Poincar\'e inequality \eqref{eq:poincare1}.
Since $\intO h \,dx = 0$, we write the upper bound as
\begin{equation}\label{eq:neumann:proof:1x}
	-\langle F'(v), u-v \rangle
	=
	-\langle F'(v), (u-v)+t_{u-v} \rangle \leq \|F'(v)\|_{*,W^{1,p}} \|\nabla (u-v)\|_p.
\end{equation}
The upper estimate on $\|F'(v)\|_{*,W^{1,p}}$ is similar to \eqref{eq:thm1:proof:1} and can be derived as follows. 
For any $\xi \in W^{1,p}(\Omega)$ such that $\intO \xi \,dx = 0$, and for any $\eta^* \in Q^*_\nu$, we have 
\begin{align}
	|\langle F'(v), \xi \rangle|
	&=
	|\intO (|\nabla v|^{p-2} \nabla v - \eta^*) \cdot \nabla \xi \,dx + \intO \eta^* \cdot \nabla \xi  \,dx - \intO h \xi \,dx|
	\\
	&=
	|\intO (|\nabla v|^{p-2} \nabla v - \eta^*) \cdot \nabla \xi \,dx - \intO (\text{div}\,\eta^* + h) \xi \,dx + \int_{\partial \Omega} \xi \, (\eta^* \cdot \nu) \,dS|
	\\
	&\leq
	|\intO (|\nabla v|^{p-2} \nabla v - \eta^*) \cdot \nabla \xi \,dx| + |\intO (\text{div}\,\eta^* + h) \xi \,dx|
	+
	|\int_{\partial \Omega} \xi \, (\eta^* \cdot \nu) \,dS|
	\\
	\label{eq:neumann:proof:1}
	&\leq
	\|\nabla \xi\|_p \left(\||\nabla v|^{p-2} \nabla v - \eta^*\|_{p'} 
	+
	C_P \|\text{div}\,\eta^* + h\|_{p'}
	+
	C_T \|\eta^* \cdot \nu\|_{p',\partial \Omega} 
	\right),
\end{align}
where the inequalities \eqref{eq:poincare1} and \eqref{eq:poincare} have been used.
Thus, we obtain the estimate \eqref{eq:neumann-main}.
\end{proof}

\begin{remark}
In the linear case $p=2$, related estimates are presented in \cite[Chapter~4.1.3]{RepBook1}.
Some results on local and global regularity of the solution of the problem \eqref{eq:neumann:weak1} are obtained in \cite{antonini,ciacni-mazja,guarnotta,lieberman}, see also Remark~\ref{rem:regQ}. 
\end{remark}

\subsection{Vector Poisson problem}\label{sec:vector}

Consider the vector version of the Poisson problem from Section~\ref{sec:p-Poisson}.
Let $n \in \mathbb{N}$. 
Using classical methods, it can be proved that there exists a unique vector-function
$\mathbf{u} = (u_1,\dots,u_n) \in W_0^{1,p}(\Omega;\mathbb{R}^n)$ such that 
\begin{equation}\label{eq:vector1}
\intO |\nabla \mathbf{u}|^{p-2} \nabla \mathbf{u} \cdot \nabla \boldsymbol{\xi} \,dx = \intO \mathbf{h} \cdot \boldsymbol{\xi} \,dx, \quad \boldsymbol{\xi} \in W_0^{1,p}(\Omega;\mathbb{R}^n),
\end{equation}
where 
\begin{align}
|\nabla \mathbf{u}| 
&= 
(|\nabla u_1|^2 + \dots + |\nabla u_n|^2)^{1/2},\\
\nabla \mathbf{u} \cdot \nabla \boldsymbol{\xi} 
&=
\nabla u_1 \cdot \nabla \xi_1 + \dots + \nabla u_n \cdot \nabla \xi_n,\\
\mathbf{h} \cdot \boldsymbol{\xi}
&=
h_1 \xi_1 + \dots + h_n \xi_n.
\end{align}
Here, we assume that $\mathbf{h} \in L^{p'}(\Omega;\mathbb{R}^n)$. 

Introduce the space 
\begin{equation}\label{eq:Q*-vec}
	Q^*_{\text{vec}} = \{\boldsymbol{\eta} = (\eta_1,\dots,\eta_n) \in (L^{p'}(\Omega;\mathbb{R}^{N}))^n:~ \textbf{div}\, \boldsymbol{\eta} \in L^{p'}(\Omega; \mathbb{R}^n)\},
\end{equation}
where the divergence is defined componentwise:
$$
\textbf{div}\, \boldsymbol{\eta} = (\text{div}\, \eta_1, \dots, \text{div}\, \eta_n).
$$

We have the following result, which is similar to Theorems~\ref{thm:1} and \ref{thm:2}. 
In its proof, the algebraic inequalities \eqref{eq:algebr-ineq1} and \eqref{eq:algebr-ineq2} are used for vectors of dimension $nN$.
\begin{theorem}\label{thm:vector-Poisson}
	Let $p>1$ and $n \in \mathbb{N}$.
	Then for the solution $\mathbf{u}$ of  \eqref{eq:vector1} and any $\mathbf{v} \in W_0^{1,p}(\Omega;\mathbb{R}^n)$ the following estimate holds:		
	\begin{equation}\label{eq:vector-Poisson-main}
		\|\nabla (\mathbf{u}-\mathbf{v})\|_p
		\leq
		C \|F'(\mathbf{v})\|_{*}^{\frac{1}{\max\{1,p-1\}}}
		\leq
		C
		\inf_{\boldsymbol{\eta^*} \in Q^*_{\mathrm{vec}}}
		\left(\|\boldsymbol{\tau^*} - \boldsymbol{\eta^*}\|_{p'} 
		+
		C_F \|\mathbf{div}\,\boldsymbol{\eta^*} + \mathbf{h} \|_{p'}
		\right)^{\frac{1}{\max\{1,p-1\}}},
	\end{equation}
	where $\boldsymbol{\tau^*} := |\nabla \mathbf{v}|^{p-2} \nabla \mathbf{v}$ and 
	\begin{equation}
		C = 
		\begin{cases}
			2^{\frac{p-2}{p-1}},   &\text{if}~ p \geq 2,\\
			(p-1)^{-1} \, 2^{\frac{2-p}{p}} \left(C_F^{p'}\|\mathbf{h}\|_{p'}^{p'} + \|\mathbf{v}\|_p^p \right)^{\frac{2-p}{p}},   &\text{if}~ p < 2.
		\end{cases}
	\end{equation} 
\end{theorem}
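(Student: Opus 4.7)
The plan is to mirror the proofs of Theorems~\ref{thm:1} and \ref{thm:2}, exploiting that the algebraic inequalities \eqref{eq:algebr-ineq1} and \eqref{eq:algebr-ineq2}, originally stated for $a, b \in \mathbb{R}^N$, remain valid for vectors of any finite dimension. The key observation is that $\nabla \mathbf{u}$ and $\nabla \mathbf{v}$ can be viewed as elements of $\mathbb{R}^{nN}$, with $|\nabla \mathbf{u}|$ being precisely the Euclidean norm of this vector. Thus the whole scalar scheme transfers with purely notational changes.

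First, I would record the vector energy estimate: testing \eqref{eq:vector1} with $\boldsymbol{\xi} = \mathbf{u}$ and applying the H\"older and (componentwise) Friedrichs inequalities gives $\|\nabla \mathbf{u}\|_p^p \leq C_F^{p'} \|\mathbf{h}\|_{p'}^{p'}$. Then, for arbitrary $\mathbf{v} \in W_0^{1,p}(\Omega;\mathbb{R}^n)$, I compute
\begin{align}
-\langle F'(\mathbf{v}), \mathbf{u}-\mathbf{v} \rangle
&= -\intO |\nabla \mathbf{v}|^{p-2} \nabla \mathbf{v} \cdot \nabla (\mathbf{u}-\mathbf{v}) \,dx + \intO \mathbf{h} \cdot (\mathbf{u}-\mathbf{v}) \,dx \\
&= \intO (|\nabla \mathbf{u}|^{p-2} \nabla \mathbf{u} - |\nabla \mathbf{v}|^{p-2} \nabla \mathbf{v}) \cdot \nabla (\mathbf{u}-\mathbf{v}) \,dx,
\end{align}
where the second equality uses \eqref{eq:vector1} with $\boldsymbol{\xi} = \mathbf{u}-\mathbf{v}$. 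Applying \eqref{eq:algebr-ineq1} with $a=\nabla \mathbf{v}(x) \in \mathbb{R}^{nN}$ and $b = \nabla \mathbf{u}(x) \in \mathbb{R}^{nN}$ yields, for $p \geq 2$, the lower bound $2^{2-p} \|\nabla(\mathbf{u}-\mathbf{v})\|_p^p$. For $p<2$, I repeat the derivation of \eqref{eq:thm2:proof:0} verbatim, substituting $\mathbb{R}^{nN}$ for $\mathbb{R}^N$ and using the vector energy bound, which gives
\[
-\langle F'(\mathbf{v}), \mathbf{u}-\mathbf{v} \rangle \geq \frac{(p-1)\,2^{\frac{p-2}{p}}}{\left(C_F^{p'}\|\mathbf{h}\|_{p'}^{p'} + \|\nabla \mathbf{v}\|_p^p\right)^{\frac{2-p}{p}}} \|\nabla(\mathbf{u}-\mathbf{v})\|_p^2.
\]

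Next, I obtain the dual upper bound. Given $\boldsymbol{\eta^*} \in Q^*_{\mathrm{vec}}$ and any test $\boldsymbol{\xi} \in W_0^{1,p}(\Omega;\mathbb{R}^n)$, integration by parts componentwise gives $\intO \boldsymbol{\eta^*} \cdot \nabla \boldsymbol{\xi} \,dx = -\intO \mathbf{div}\,\boldsymbol{\eta^*} \cdot \boldsymbol{\xi} \,dx$. Inserting and subtracting this term and applying the H\"older and componentwise Friedrichs inequalities, I arrive at
\[
|\langle F'(\mathbf{v}), \boldsymbol{\xi}\rangle| \leq \|\nabla \boldsymbol{\xi}\|_p \left(\|\boldsymbol{\tau^*} - \boldsymbol{\eta^*}\|_{p'} + C_F \|\mathbf{div}\,\boldsymbol{\eta^*} + \mathbf{h}\|_{p'}\right),
\]
so that $\|F'(\mathbf{v})\|_* \leq \|\boldsymbol{\tau^*} - \boldsymbol{\eta^*}\|_{p'} + C_F \|\mathbf{div}\,\boldsymbol{\eta^*} + \mathbf{h}\|_{p'}$, followed by taking the infimum over $\boldsymbol{\eta^*} \in Q^*_{\mathrm{vec}}$.

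Finally, combining the two-sided bounds via $-\langle F'(\mathbf{v}), \mathbf{u}-\mathbf{v}\rangle \leq \|F'(\mathbf{v})\|_* \|\nabla(\mathbf{u}-\mathbf{v})\|_p$ and dividing by the appropriate power of $\|\nabla(\mathbf{u}-\mathbf{v})\|_p$ (as in \eqref{eq:thm1:proof:2x}) yields \eqref{eq:vector-Poisson-main} with the stated constant $C$. There is no real obstacle: the only point that requires a moment of care is the validity of \eqref{eq:algebr-ineq1} and \eqref{eq:algebr-ineq2} for $\mathbb{R}^{nN}$-valued arguments, which is immediate since those inequalities depend only on the Euclidean inner product structure and hold in any finite dimension.
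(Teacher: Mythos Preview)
Your proposal is correct and follows precisely the approach indicated in the paper, which simply remarks that the proof mirrors Theorems~\ref{thm:1} and~\ref{thm:2} with the algebraic inequalities \eqref{eq:algebr-ineq1} and \eqref{eq:algebr-ineq2} applied to vectors in $\mathbb{R}^{nN}$. One small point: your derivation in the case $p<2$ produces the constant with $\|\nabla \mathbf{v}\|_p^p$, whereas the stated theorem has $\|\mathbf{v}\|_p^p$; the former is what the argument actually yields (consistently with Theorem~\ref{thm:2}), so this appears to be a typo in the statement rather than an issue with your proof.
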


\section{Polyharmonic obstacle problem}\label{sec:polyharm}

Let $\Omega$ be a bounded domain in $\mathbb{R}^N$, $N \geq 1$, and let $m \in \mathbb{N}$, $m \geq 2$.  
For sufficiently smooth functions, we define a nonlinear polyharmonic operator as
\begin{equation}\label{eq:nonlinear-polyharm}
	\Delta_p^m u = 
	\begin{cases}
		\Delta^{k}(|\Delta^{k}u|^{p-2}\Delta^{k}u),   &\text{if}~ m=2k,\\
		-\Delta^{k}(\text{div}\,(|\nabla (\Delta^{k}u)|^{p-2}\nabla (\Delta^{k}u)),  &\text{if}~ m=2k+1, 
	\end{cases}
\quad k \in \mathbb{N},
\end{equation}
where $\Delta = \text{div} \nabla$ is the Laplacian. 
In the case $m=1$, $\Delta_p^m$ is the $p$-Laplacian. 
If $p=2$ and $m=2$, then $\Delta_p^m$ is a bi-Laplacian. 
We also introduce the following operator of order $m$:
\begin{equation}\label{eq:nonlinear-polyharm:symbol}
	\mathcal{D}_m u 
	= 
	\begin{cases}
		\Delta^{k}u,   &\text{if}~ m=2k,\\
		\nabla (\Delta^{k}u),  &\text{if}~ m=2k+1,
	\end{cases}
	\quad k \in \mathbb{N}.
\end{equation}
Consider the space $W_0^{m,p}(\Omega)$ as the closure of $C_0^\infty(\Omega)$ with respect to the norm of $W^{m,p}(\Omega)$.
It is known (see \cite{CP}) that $W_0^{m,p}(\Omega)$ can be equipped with the norm $\|\mathcal{D}_m (\cdot)\|_p$ and this norm is equivalent to the standard one.
Moreover, there is a compact embedding of $W_0^{m,p}(\Omega)$ into $L^p(\Omega)$.

As in Section~\ref{sec:obstacle}, consider
$$
K = \{v \in W_0^{m,p}(\Omega):~ v \geq \phi~ \text{a.e.\ in}~ \Omega\},
$$
where the obstacle $\phi \in L^1(\Omega)$ is such that $K \neq \emptyset$. 
The set $K$ is closed and convex in $W_0^{m,p}(\Omega)$. 
By general theory, there exists a unique function $u \in K$ satisfying the inequality
\begin{equation}\label{eq:polyharm:u}
\langle \Delta_p^m u - h, w-u \rangle \geq 0, 
\quad w \in K.
\end{equation}
In integral terms, $u$ satisfies 
$$
\int_\Omega |\mathcal{D}_m u|^{p-2} \mathcal{D}_m u \, \mathcal{D}_m (w-u) \,dx \geq \int_\Omega h (w-u) \,dx,
$$
or, in the expanded form,
\begin{align}
	\int_\Omega |\Delta^k u|^{p-2} \Delta^k u \, \Delta^k (w-u) \,dx &\geq 
	\int_\Omega h (w-u) \,dx,
	\quad \text{if}~ m=2k,\\
	\int_\Omega |\nabla(\Delta^k u)|^{p-2} \,\nabla(\Delta^k u) \cdot \nabla(\Delta^k (w-u)) \,dx
	&\geq 
	\int_\Omega h (w-u) \,dx,
	\quad \text{if}~ m=2k+1.
\end{align}
In particular, if $K = W_0^{m,p}(\Omega)$, then $u$ is a weak solution of the Dirichlet problem for the polyharmonic Poisson equation
\begin{equation}\label{eq:Poisson:polyharm}
	\left\{
	\begin{aligned}
		\Delta_p^m u &= h \quad \text{in}~ \Omega,\\
		u = 0, \dots, \frac{\partial^{m-1} u}{\partial \nu^{m-1}} &= 0 \quad \text{on}~ \partial\Omega.
	\end{aligned}
	\right.
\end{equation}
Similarly to Lemma~\ref{lem:energy-est} (see the inequality \eqref{eq:upper-bound-on-u2}), 
it can be shown that, for an arbitrary $w \in K$, the solution $u \in K$ of \eqref{eq:polyharm:u} satisfies the estimate 
\begin{equation}\label{eq:polyharm:energyest}
\|\mathcal{D}_m u\|_p \leq 
C_3 
:=
\max\{(p C_{F,m} \|h\|_{p'}+1)^{\frac{1}{p-1}}, 
\|\mathcal{D}_m w\|_p - p \,\langle h, w\rangle\},
\end{equation}
where $C_{F,m}$ is the constant of embedding  $W_0^{m,p}(\Omega) \subset L^p(\Omega)$. 
For simplicity, we do not provide an analogue of the inequality \eqref{eq:upper-bound-on-u2x}.

As in Section~\ref{sec:p-Poisson}, $C_{F,m}$ is expressed in terms of the first eigenvalue $\lambda_{1,m}(p;\Omega)$ of the polyharmonic $p$-Laplacian: $C_{F,m} = \lambda_{1,m}^{-1/p}(p;\Omega)$.
Upper and lower bounds on $\lambda_{1,m}(p;\Omega)$ in terms of $\lambda_{1,m}(p;B_R)$ with a suitable choice of $R>0$ follow from the domain monotonicity of the first eigenvalue, which is a consequence of the variational formulation of $\lambda_{1,m}(p;B_R)$ and the definition of $W_0^{m,p}(\Omega)$. 
Constructive estimates on $C_{F,m}$ in the case $p=2$ can be found, for example, in \cite{YY}.

For an arbitrary function $v \in {K}$, we denote
$$
\Omega^v_\phi = \{x \in \Omega:~ v(x)=\phi(x)\}, 
\quad 
\Omega^v_0 = \Omega \setminus \Omega^v_\phi.
$$
As in  \eqref{eq:K*}, consider the ``norm over $K$''
$$
\|\Delta_p^m v - h\|_{*,{K}}
=
\sup_{w \in K \setminus \{v\}} \frac{
	(-\langle \Delta_p^m v - h, w-v \rangle)_+}{\|w - v\|_X},
\quad v \in K,
$$
and the space
$$
Q^*_m 
= 
\begin{cases}
\{\eta \in L^{p'}(\Omega):~ \Delta^k \eta \in L^{p'}(\Omega)\},  &\text{if}~ m=2k, \\
\{\eta \in L^{p'}(\Omega;\mathbb{R}^N):~ \Delta^k (\text{div}\, \eta) \in L^{p'}(\Omega)\},  &\text{if}~ m=2k+1.
\end{cases}
$$

Using the general approach described in Section~\ref{sec:method}, we prove the following a posteriori estimates.
\begin{theorem}\label{thm:polyharm}
	Let $p>1$, $m \in \mathbb{N}$, and $m \geq 2$. 
	Then for the solution $u \in K$ of \eqref{eq:polyharm:u} and any $v \in K$ the following estimates hold:		
	\begin{align}
		\label{eq:polyharm-main0:p>2}
		2^{2-p} \|\mathcal{D}_m(u-v)\|_p^{p-1}
		+
		\frac{\langle \Delta_p^m u - h, v-u \rangle}{\|\mathcal{D}_m(u-v)\|_p}
		&\leq
		\|\Delta_p^m v - h\|_{*,{K}},   \quad \text{if}~ p \geq 2,\\
		\label{eq:polyharm-main0:p<2}
		\frac{(p-1) \, 2^{\frac{p-2}{p}}}{\left(C_3^p + \|\nabla v\|_p^p \right)^{\frac{2-p}{p}}} \|\mathcal{D}_m(u-v)\|_p
		+
		\frac{\langle \Delta_p^m u - h, v-u \rangle}{\|\mathcal{D}_m(u-v)\|_p}
		&\leq
		\|\Delta_p^m v - h\|_{*,{K}},   \quad \text{if}~ p < 2,
	\end{align}
	where the constant $C_{3}>0$ is given in \eqref{eq:polyharm:energyest} for an arbitrary fixed $w \in K$. 
	Moreover, if $m=2k$, then 
	\begin{align}
		&\|\Delta_p^m v - h\|_{*,{K}}
		\\
		\label{eq:polyharm-main1}
		&\leq
		\inf_{\eta^* \in Q^*_m}
		\left(\|\tau^*  - \eta^*\|_{p'} 
		+
		C_{F,m}\|\Delta^k \eta^*  - h\|_{p',\Omega_0^v} 
		+
		C_{F,m}\|(\Delta^k \eta^*  - h)_+\|_{p',\Omega_\phi^v}
		\right),
	\end{align}
	where $\tau^* := |\Delta^k v|^{p-2}\Delta^k v$, 
	while if $m=2k+1$, then 
	\begin{align}
		&\|\Delta_p^m v - h\|_{*,{K}}
		\\
		\label{eq:polyharm-main2}
		&\leq
		\inf_{\eta^* \in Q^*_m}
		\left(
		\|\tau^*  - \eta^*\|_{p'}
		+
		C_{F,m}\|\Delta^k (\mathrm{div}\, \eta^*) + h\|_{p',\Omega_0^v}
		+
		C_{F,m}\|(\Delta^k (\mathrm{div}\, \eta^*) + h)_+\|_{p',\Omega_\phi^v}
		\right),
	\end{align}
	where $\tau^* := |\nabla (\Delta^k v)|^{p-2}\nabla (\Delta^k v)$.
	
	In particular, since $\langle \Delta_p^m u - h, v-u \rangle \geq 0$, we have
	\begin{equation}\label{eq:polyharm-main0}
		\|\mathcal{D}_m(u-v)\|_p
		\leq
		C \|\Delta_p^m v - h\|_{*,{K}}^{\frac{1}{\max\{1,p-1\}}},
	\end{equation}
	where 
	\begin{equation}
		C = 
		\begin{cases}
			2^{\frac{p-2}{p-1}},   &\text{if}~ p \geq 2,\\
			(p-1)^{-1} \, 2^{\frac{2-p}{p}} \left(C_3^p + \|\nabla v\|_p^p \right)^{\frac{2-p}{p}},   &\text{if}~ p < 2.
		\end{cases}
	\end{equation} 
\end{theorem}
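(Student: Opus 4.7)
The plan is to mirror the proof of Theorem~\ref{thm:obstacle}, replacing the $p$-Laplacian $-\Delta_p$ by the polyharmonic operator $\Delta_p^m$, the gradient $\nabla$ by the differential operator $\mathcal{D}_m$ defined in \eqref{eq:nonlinear-polyharm:symbol}, and the Friedrichs constant $C_F$ by $C_{F,m}$. The starting point is the identity
$$
-\langle \Delta_p^m v - h, u-v \rangle
=
\int_\Omega (|\mathcal{D}_m u|^{p-2} \mathcal{D}_m u - |\mathcal{D}_m v|^{p-2} \mathcal{D}_m v)\cdot \mathcal{D}_m(u-v)\, dx
+ \langle \Delta_p^m u - h, v-u \rangle,
$$
obtained by integration by parts (boundary contributions vanish because all derivatives of $u-v \in W_0^{m,p}(\Omega)$ up to order $m-1$ vanish on $\partial\Omega$). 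For $p \geq 2$, the algebraic inequality \eqref{eq:algebr-ineq1}, applied pointwise with $a=\mathcal{D}_m v$ and $b=\mathcal{D}_m u$ (vectors in $\mathbb{R}^N$ when $m$ is odd and scalars when $m$ is even), gives the lower bound $2^{2-p}\|\mathcal{D}_m(u-v)\|_p^{p} + \langle \Delta_p^m u - h, v-u\rangle$. For $p < 2$, one repeats the chain of inequalities leading to \eqref{eq:thm2:proof:0} with \eqref{eq:algebr-ineq2}, substituting the energy bound \eqref{eq:polyharm:energyest} in place of \eqref{eq:energy-id1}.

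On the other hand, the definition of $\|\Delta_p^m v - h\|_{*,K}$ with the choice $w=u \in K$ yields
$$
-\langle \Delta_p^m v - h, u-v \rangle \leq \|\Delta_p^m v - h\|_{*,K}\, \|\mathcal{D}_m(u-v)\|_p.
$$
Combining with the two lower bounds and dividing by $\|\mathcal{D}_m(u-v)\|_p$ produces \eqref{eq:polyharm-main0:p>2} and \eqref{eq:polyharm-main0:p<2}. To obtain the constructive estimates \eqref{eq:polyharm-main1} and \eqref{eq:polyharm-main2} on $\|\Delta_p^m v - h\|_{*,K}$, I would fix arbitrary $w \in K$ and $\eta^* \in Q^*_m$, insert $\pm\eta^*$ into the integral, and perform successive integrations by parts. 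For $m=2k$ this gives
$$
-\langle \Delta_p^m v - h, w-v \rangle
=
-\int_\Omega (|\Delta^k v|^{p-2}\Delta^k v - \eta^*)\,\Delta^k(w-v)\, dx
+ \int_\Omega (\Delta^k \eta^* - h)(w-v)\, dx,
$$
while for $m=2k+1$ one additional integration by parts transfers the divergence onto $\eta^*$. The key observation is that on $\Omega_\phi^v$ we have $w-v=w-\phi\geq 0$, which allows replacing $\Delta^k\eta^* - h$ (resp.\ $\Delta^k(\mathrm{div}\,\eta^*)+h$) by its positive part. Applying H\"older's inequality together with the embedding $\|w-v\|_p \leq C_{F,m}\|\mathcal{D}_m(w-v)\|_p$ yields the claimed bounds. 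Finally, \eqref{eq:polyharm-main0} follows by discarding the nonnegative term $\langle \Delta_p^m u - h, v-u\rangle$, whose nonnegativity is obtained from \eqref{eq:polyharm:u} by taking the test element $w=v \in K$.

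The main technical obstacle is justifying the integrations by parts and tracking the regularity of the objects in $Q^*_m$: although $\eta^* \in L^{p'}(\Omega)$ has no derivatives in $L^{p'}(\Omega)$ individually, the composition $\Delta^k \eta^*$ (or $\Delta^k(\mathrm{div}\,\eta^*)$) does, and the identity must be established first for smooth test functions $w-v \in C_0^\infty(\Omega)$ and then extended by density in $W_0^{m,p}(\Omega)$. One also has to verify that all intermediate boundary traces of $w-v$ up to order $m-1$ vanish so that no boundary terms survive from the high-order integration by parts; the remaining steps are a direct, if careful, transcription of the argument from Section~\ref{sec:obstacle}.
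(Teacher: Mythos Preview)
Your proposal is correct and follows essentially the same approach as the paper: the identity for $-\langle \Delta_p^m v - h, u-v\rangle$, the lower bounds via \eqref{eq:algebr-ineq1} and \eqref{eq:algebr-ineq2} combined with the energy estimate \eqref{eq:polyharm:energyest}, the upper bound via the definition of $\|\cdot\|_{*,K}$, and the constructive bounds obtained by inserting $\pm\eta^*$ and exploiting $w-v\ge 0$ on $\Omega_\phi^v$ are exactly what the paper does. One minor slip: in your displayed identity for $m=2k$ the second integral should carry a minus sign (it is $-\int_\Omega(\Delta^k\eta^*-h)(w-v)\,dx$), but this does not affect the subsequent H\"older-and-positive-part argument.
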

\begin{proof}
	For any $v \in K$, we estimate the expression 
	\begin{align}
	-&\langle \Delta_p^m v - h, u-v \rangle
	=
	\langle \Delta_p^m u -  \Delta_p^m v, u-v \rangle
	+
	\langle \Delta_p^m u - h, v-u \rangle\\
	&=
	\intO (|\mathcal{D}_m u|^{p-2} \mathcal{D}_m u - |\mathcal{D}_m v|^{p-2} \mathcal{D}_m v)(\mathcal{D}_m u - \mathcal{D}_m v) \,dx
	+
	\langle \Delta_p^m u - h, v-u \rangle.
	\end{align}
	In the case $p \geq 2$,  the lower bound follows from \eqref{eq:algebr-ineq1}:
	\begin{equation}\label{eq:polyharm:p<21}
	-\langle \Delta_p^m v - h, u-v \rangle \geq 2^{2-p} 
	\|\mathcal{D}_m(u-v)\|_p^p + \langle \Delta_p^m u - h, v-u \rangle.
	\end{equation}
	In the case $p < 2$, the lower bound follows from arguments similar to the derivation of the estimate \eqref{eq:thm2:proof:0}. 
	Namely,  
	\begin{equation}\label{eq:polyharm:p>21}
		-\langle \Delta_p^m v - h, u-v \rangle
		\geq 
		\frac{(p-1) \, 2^{\frac{p-2}{p}}}{\left(C_3^p + \|\mathcal{D}_m v\|_p^p \right)^{\frac{2-p}{p}}} \|\mathcal{D}_m (u-v)\|_p^{2} + \langle \Delta_p^m u - h, v-u \rangle,
	\end{equation}
	where $C_3>0$ is defined in \eqref{eq:polyharm:energyest}. 
	
	The upper bound is written directly using the notation $\|(-\Delta)^m v - h\|_{*,{K}}$:
	\begin{equation}\label{eq:polyharm:p<22}
	-\langle \Delta_p^m v - h, u-v \rangle
	\leq 
	\|\Delta_p^m v - h\|_{*,{K}} \|\mathcal{D}_m (u - v)\|_{p}.
	\end{equation}
	Combining \eqref{eq:polyharm:p<21}--\eqref{eq:polyharm:p<22},
	we get \eqref{eq:polyharm-main0:p>2}, \eqref{eq:polyharm-main0:p<2}, \eqref{eq:polyharm-main0}. 
	%showonlyrefs for (6.8) and (6.5):
	\noeqref{eq:polyharm:p>21} \noeqref{eq:polyharm-main0:p<2} 
	The estimates \eqref{eq:polyharm-main1} and \eqref{eq:polyharm-main2} on $\|\Delta_p^m v - h\|_{*,{K}}$ are implied by the following calculations with arbitrary $w \in {K}$ and $\eta^* \in Q^*_m$. 
	For $m=2k$, 
	\begin{align}
	-&\langle \Delta_p^m v - h, w-v \rangle
	\\
	&=
	-\int_\Omega (|\Delta^k v|^{p-2}\Delta^k v  - \eta^*) \, \Delta^k (w-v) \,dx
	-
	\int_\Omega \eta^* \, \Delta^k (w-v) \,dx
	+
	\int_\Omega h (w-v) \,dx
	\\
	&=
	-\int_\Omega (|\Delta^k v|^{p-2}\Delta^k v  - \eta^*) \, \Delta^k (w-v) \,dx
	-
	\int_\Omega (\Delta^k \eta^* - h) (w-v) \,dx
	\\
	&\leq
	\|\tau^*  - \eta^*\|_{p'} \|\Delta^k (w-v)\|_{p}
	+
	\int_{\Omega_0^v} (\Delta^k \eta^* - h) (w-v) \,dx
	+
	\int_{\Omega_\phi^v} (\Delta^k \eta^* - h)_+ (w-v) \,dx
	\\
	&\leq
	\|\Delta^k (w-v)\|_{p}
	\left(
	\|\tau^*  - \eta^*\|_{p'} 
	+
	C_{F,m}\|\Delta^k \eta^*  - h\|_{p',\Omega_0^v} 
	+
	C_{F,m}\|(\Delta^k \eta^*  - h)_+\|_{p',\Omega_\phi^v}
	\right),
	\end{align}
which yields \eqref{eq:polyharm-main1}.
In the case $m=2k+1$, we estimate $\|\Delta_p^m v - h\|_{*,{K}}$ similarly:
	\begin{align}
	&-\langle \Delta_p^m v - h, w-v \rangle
	\\
	&=
	-\int_\Omega (|\nabla(\Delta^k v)|^{p-2} \nabla(\Delta^k v)  - \eta^*) \cdot \nabla(\Delta^k (w-v)) \,dx
	\\
	&\quad-
	\int_\Omega \eta^* \cdot \nabla(\Delta^k (w-v)) \,dx
	+
	\int_\Omega h (w-v) \,dx
	\\
	&=
	-\int_\Omega (|\nabla(\Delta^k v)|^{p-2} \nabla(\Delta^k v)  - \eta^*) \cdot \nabla(\Delta^k (w-v)) \,dx
	+
	\int_\Omega (\Delta^k (\text{div}\, \eta^*) + h) (w-v) \,dx
	\\
	\notag
	&\leq
	\|\nabla(\Delta^k (w-v))\|_{p}
	\left(
	\|\tau^*  - \eta^*\|_{p'}
	+
	C_{F,m}\|\Delta^k (\text{div}\, \eta^*) + h\|_{p',\Omega_0^v}
	+
	C_{F,m}\|(\Delta^k (\text{div}\, \eta^*) + h)_+\|_{p',\Omega_\phi^v}
	\right),
\end{align}
which implies \eqref{eq:polyharm-main2}.
\end{proof}

\begin{remark}
	In the case $p=2$ and $m=2$, 
	the estimates \eqref{eq:polyharm-main0} and \eqref{eq:polyharm-main1} of Theorem \ref{thm:polyharm} give
	\begin{equation}\label{eq:polyharm-main0m2}
			\|\Delta(u-v)\|_2
			\leq
			\|\Delta v  - \eta^*\|_{2} 
			+
			C_{F,2}\|\Delta \eta^*  - h\|_{2,\Omega_0^v} 
			+
			C_{F,2}\|(\Delta \eta^*  - h)_+\|_{2,\Omega_\phi^v}.
	\end{equation}
	This estimate was obtained in \cite{Frolov1,NR,RepBook1} by similar arguments, but without using the ``norm'' $\|\Delta_p^m v - h\|_{*,{K}}$.
	See also \cite{AR,Besov} for an estimate obtained using a dual formulation of the considered problem (cf.\ Remark~\ref{rem:ANR}).
\end{remark}

\begin{remark}
	We refer to \cite[Chapter~2.5]{GGS} and \cite{CF,Fre} for information on the regularity of the solution $u$ in the case $p=2$. 
	In particular, if the obstacle is sufficiently smooth, then
	$$
	(\Delta_2^m u - h)(u - \phi) = 0 
	\quad \text{and} \quad
	\Delta_2^m u - h \geq 0 
	\quad \text{a.e.\ in}~ \Omega.
	$$
	It is then clear that the right-hand sides in \eqref{eq:polyharm-main1} and \eqref{eq:polyharm-main2} vanish for $v = u$ and, respectively, $\eta^* = \Delta^k u$ when $m=2k$ or $\eta^* = \nabla (\Delta^k u)$ when $m=2k+1$.
\end{remark}

\section{Nonlocal Poisson problem}\label{sec:fractional-Poisson}

Let $s \in (0,1)$ and $\Omega$ be an open bounded set in $\mathbb{R}^N$. 
Define the Sobolev space of fractional order
$$
W^{s,p}(\mathbb{R}^N)
= 
\{
u \in L^p(\mathbb{R}^N):~ [u]_p < +\infty
\},
$$
where $[\,\cdot\,]_p$ is the Gagliardo seminorm:
\begin{equation}\label{eq:nonlocal-1}
	[u]_p^p 
	= 
	\iint_{\mathbb{R}^{2N}} \frac{|u(x)-u(y)|^{p}}{|x-y|^{N+ps}} \, dxdy
	=
	\iint_{\mathbb{R}^{2N}} \left(\frac{|u(x)-u(y)|}{|x-y|^{s}}\right)^p \frac{dxdy}{|x-y|^{N}}.
\end{equation}
Let $\Wso$ stand for the closure of $C_0^\infty(\Omega)$ with respect to the norm $\|\cdot\|_p+[\,\cdot\,]_p$ of $W^{s,p}(\mathbb{R}^N)$.
This space is uniformly convex, separable, Banach, and can be equipped with the norm $[\,\cdot\,]_p$.
Moreover, the embedding of $\Wso$ into $L^p(\Omega)$ is compact, see, e.g., \cite{BLP}.
We denote by $C_{F,s}$ the corresponding embedding constant.

Let us discuss known estimates on $C_{F,s}$, see similar estimates on $C_F$ in Section~\ref{sec:p-Poisson}.
Obviously, $C_{F,s} = \lambda_{1,s}^{-1/p}(p;\Omega)$, where $\lambda_{1,s}(p;\Omega)$ is the first eigenvalue of the fractional $p$-Laplacian.
In terms of $\lambda_{1,s}(p;B_R)$, the lower bound on $\lambda_{1,s}(p;\Omega)$ is given by the Faber--Krahn inequality (see \cite[Theorem~3.5]{BLP}), and the upper bound follows, for example, from the domain monotonicity of $\lambda_{1,s}(p;\Omega)$.
Also, $\lambda_{1,s}(p;\Omega)$ can be estimated through some geometric characteristics of the domain (see \cite[Remark~3.2]{BLP}), and in the linear case $p=2$ through the first eigenvalue of the classical Laplacian in $B_R$ (see \cite{chensong}).

Consider the problem of finding a function $u \in \Wso$ such that
\begin{equation}\label{eq:DEu}
	\frac{1}{p} \,\langle D[u]_p^p, \xi \rangle 
	-
	\int_\Omega h\, \xi \, dx
	= 0
	\quad \text{for any}~ \xi \in \Wso,
\end{equation}
where
\begin{equation}\label{eq:Dupxi}
	\langle D[u]_p^p, \xi \rangle 
	=
	p \,
	\iint_{\mathbb{R}^{2N}} \frac{|u(x)-u(y)|^{p-2}(u(x)-u(y))(\xi(x)-\xi(y))}{|x-y|^{N+ps}} \, dxdy.
\end{equation}
This problem can be interpreted as the problem of finding a critical point of the functional $F: \Wso \to \mathbb{R}$, defined as
$$
F(u) = \frac{1}{p} \, [u]_p^p - \int_\Omega h u \, dx.
$$
The solution $u$ of \eqref{eq:DEu} exists and is unique.
Moreover, substituting $\xi = u$ into \eqref{eq:DEu} and applying the H\"older and Friedrichs inequalities, we derive the standard energy estimate
\begin{equation}\label{eq:energy-id2}
	[u]_p^p 
	\leq 
	C_{F,s}^{p'} \|h\|_{p'}^{p'}.
\end{equation}

We define a ``nonlocal gradient'' of a measurable function $w: \mathbb{R}^{N} \mapsto \mathbb{R}$ and a ``nonlocal divergence'' of a measurable function $\gamma: \mathbb{R}^{2N} \mapsto \mathbb{R}$ as follows (see, e.g., \cite{hepp}, and also \cite[Section~8]{BLP} for alternative definitions):
\begin{equation}\label{eq:grad-div-nonlocal}
	\nabla^{(s)} w(x,y) = \frac{w(x)-w(y)}{|x-y|^s},
	\qquad
	(\text{div}^{(s)} \gamma)(x) = 2  
	\int_{\mathbb{R}^N} \gamma(x,y) \, \frac{dy}{|x-y|^{N+s}}.
\end{equation}
We also consider a nonlocal $p$-Laplacian of $w$ (see, e.g., \cite{LL}), which is naturally related to the problem \eqref{eq:DEu}, and a notation related to \eqref{eq:nonlocal-1}:
\begin{equation}\label{eq:nonloc:N}
\mathcal{L}^s_p w(x)
=  
2 \,
\int_{\mathbb{R}^N} \frac{|u(x)-u(y)|^{p-2}(u(x)-u(y))}{|x-y|^{N+ps}} \, dy,
\quad
\mathcal{P}(\gamma) = \iint_{\mathbb{R}^{2N}} |\gamma(x,y)|^{p'} \, \frac{dxdy}{|x-y|^{N}}.
\end{equation}
Evidently, 
\begin{equation}
\mathcal{L}^s_p w 
	=
\text{div}^{(s)}(|\nabla^{(s)} w|^{p-2} \nabla^{(s)} w),
\quad
\mathcal{P}(|\nabla^{(s)} w|^{p-2}\nabla^{(s)} w) = [w]_p^p.
\end{equation}
The presence of the factor $2$ in \eqref{eq:grad-div-nonlocal} and \eqref{eq:nonloc:N} is explained by the expression \eqref{eq:frac:pointwise-weak} below.

Define the space
\begin{equation}\label{eq:Qs}
Q_s^* = \left\{
\gamma: \mathbb{R}^{2N} \mapsto \mathbb{R}:~ \gamma(x,y)=-\gamma(y,x), ~~
\text{div}^{(s)} \gamma \in L^{p'}(\Omega),
~~
\mathcal{P}(\gamma) < \infty
\right\}.
\end{equation}

\begin{theorem}\label{thm:nonlocal}
	Let $p>1$ and $s \in (0,1)$. 
	Then for the solution $u$ of \eqref{eq:DEu} and any $v \in \Wso$ the following estimates hold:	
	\begin{equation}\label{eq:nonlocal-main1}
		[u-v]_p
		\leq
		C \|F'(v)\|_{*}^{\frac{1}{\max\{1,p-1\}}}
	\end{equation}
	and
	\begin{equation}\label{eq:nonlocal-main3}
		\|F'(v)\|_{*} \leq 
		\inf_{\gamma^* \in Q_s^*}
		\left(
		\left(\mathcal{P}(|\nabla^{(s)} v|^{p-2} \nabla^{(s)} v - \gamma^*)\right)^{1/p'} + C_{F,s} \|\mathrm{div}^{(s)} \gamma^* - h\|_{p'}
		\right),
	\end{equation}
	where $\|F'(v)\|_{*}$ is the standard operator norm and  
	\begin{equation}
		C = 
		\begin{cases}
			2^{\frac{p-2}{p-1}},   &\text{if}~ p \geq 2,\\
			(p-1)^{-1} \, 2^{\frac{2-p}{p}} \left(C_{F,s}^{p'} \|h\|_{p'}^{p'} + [v]_p^p \right)^{\frac{2-p}{p}},   &\text{if}~ p < 2.
		\end{cases}
	\end{equation} 
	Moreover, if
	$\mathcal{L}^s_p v \in L^{p'}(\Omega)$,
	then  
	\begin{equation}\label{eq:nonlocal-main2}
		\|F'(v)\|_{*} \leq C_{F,s} \|\mathcal{L}^s_p v - h\|_{p'}.
	\end{equation}
\end{theorem}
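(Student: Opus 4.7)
The plan is to apply the general framework of Section~\ref{sec:method} with $X = \Wso$ equipped with the seminorm $[\,\cdot\,]_p$, $K = X$, and $A = F'$, reproducing in the nonlocal setting the strategy already used for Theorems~\ref{thm:1} and \ref{thm:2}: bound $-\langle F'(v), u-v\rangle$ from below via algebraic monotonicity inequalities and from above via the operator norm, then separately estimate $\|F'(v)\|_*$ by introducing an auxiliary ``nonlocal flux'' $\gamma^*$. Throughout, the natural measure on $\mathbb{R}^{2N}$ is $\frac{dx\,dy}{|x-y|^N}$, which turns $[\,\cdot\,]_p$ into the $L^p$-norm of $\nabla^{(s)}$ and $\mathcal{P}(\,\cdot\,)$ into the $p'$-th power of the corresponding $L^{p'}$-norm.

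For \eqref{eq:nonlocal-main1}, I would first use \eqref{eq:DEu} to rewrite
\begin{equation}
-\langle F'(v), u-v\rangle
=
\iint_{\mathbb{R}^{2N}} \bigl(|\nabla^{(s)}u|^{p-2}\nabla^{(s)}u - |\nabla^{(s)}v|^{p-2}\nabla^{(s)}v\bigr)\,\nabla^{(s)}(u-v)\,\frac{dx\,dy}{|x-y|^N}.
\end{equation}
For $p\geq 2$, the scalar version of \eqref{eq:algebr-ineq1} applied pointwise inside this integral yields $-\langle F'(v),u-v\rangle \geq 2^{2-p}[u-v]_p^p$. For $p<2$, mimicking the derivation of \eqref{eq:thm2:proof:0} using \eqref{eq:algebr-ineq2}, Hölder's inequality in the measure $\frac{dx\,dy}{|x-y|^N}$, the elementary inequality $(1+|t|)^p\leq 2(1+|t|^p)$, and the energy estimate \eqref{eq:energy-id2} gives the quadratic lower bound
\begin{equation}
-\langle F'(v),u-v\rangle \geq \frac{(p-1)\,2^{(p-2)/p}}{\bigl(C_{F,s}^{p'}\|h\|_{p'}^{p'}+[v]_p^p\bigr)^{(2-p)/p}}\,[u-v]_p^2.
\end{equation}
Combining either lower bound with the trivial upper estimate $-\langle F'(v),u-v\rangle\leq \|F'(v)\|_*[u-v]_p$ and dividing by $[u-v]_p$ produces \eqref{eq:nonlocal-main1}.

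To prove \eqref{eq:nonlocal-main3}, I would take arbitrary $\xi\in\Wso$ and $\gamma^*\in Q_s^*$ and decompose $\langle F'(v),\xi\rangle$ by adding and subtracting $\iint \gamma^*\nabla^{(s)}\xi\,\frac{dx\,dy}{|x-y|^N}$. The key step is the nonlocal integration-by-parts identity
\begin{equation}
\iint_{\mathbb{R}^{2N}} \gamma^*(x,y)\,\nabla^{(s)}\xi(x,y)\,\frac{dx\,dy}{|x-y|^N}
=
\int_\Omega \xi(x)\,(\text{div}^{(s)}\gamma^*)(x)\,dx,
\end{equation}
which follows from the antisymmetry $\gamma^*(x,y)=-\gamma^*(y,x)$, Fubini's theorem, the change of variables $(x,y)\mapsto (y,x)$, and the vanishing of $\xi$ outside $\Omega$. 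Applying Hölder in the measure $\frac{dx\,dy}{|x-y|^N}$ to the first integral and the usual Hölder together with the Friedrichs embedding $\|\xi\|_p\leq C_{F,s}[\xi]_p$ to the second yields
\begin{equation}
|\langle F'(v),\xi\rangle|\leq [\xi]_p\Bigl(\bigl(\mathcal{P}(|\nabla^{(s)}v|^{p-2}\nabla^{(s)}v - \gamma^*)\bigr)^{1/p'} + C_{F,s}\|\text{div}^{(s)}\gamma^* - h\|_{p'}\Bigr),
\end{equation}
from which \eqref{eq:nonlocal-main3} follows after dividing by $[\xi]_p$, taking the supremum over $\xi$, and the infimum over $\gamma^*$. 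For \eqref{eq:nonlocal-main2}, I would specialize $\gamma^* = |\nabla^{(s)}v|^{p-2}\nabla^{(s)}v$, which belongs to $Q_s^*$ since it is antisymmetric, satisfies $\mathcal{P}(\gamma^*)=[v]_p^p<\infty$, and has $\text{div}^{(s)}\gamma^* = \mathcal{L}^s_p v\in L^{p'}(\Omega)$ by hypothesis; the first term in the majorant then vanishes identically.

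The main obstacle is purely technical: justifying the nonlocal integration-by-parts identity and the associated Fubini swap under the minimal integrability built into $Q_s^*$ requires some care, particularly in tracking the antisymmetry condition and the boundary behavior of $\xi$. Once that identity is in hand, the rest transports from the local arguments without surprises, since the algebraic inequalities \eqref{eq:algebr-ineq1}--\eqref{eq:algebr-ineq2} and the Friedrichs embedding work identically in the difference-quotient measure; the proof is then structurally parallel to that of Theorems~\ref{thm:1} and \ref{thm:2}.
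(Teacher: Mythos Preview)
Your proposal is correct and follows essentially the same approach as the paper: the lower bounds on $-\langle F'(v),u-v\rangle$ via \eqref{eq:algebr-ineq1} and \eqref{eq:algebr-ineq2} in the measure $\frac{dx\,dy}{|x-y|^N}$, the upper bound via the operator norm, and the estimate of $\|F'(v)\|_*$ through the nonlocal integration-by-parts identity (Fubini plus antisymmetry of $\gamma^*$) are all exactly what the paper does. The only cosmetic difference is that the paper proves \eqref{eq:nonlocal-main2} directly first and then \eqref{eq:nonlocal-main3}, whereas you obtain \eqref{eq:nonlocal-main2} as the special case $\gamma^*=|\nabla^{(s)}v|^{p-2}\nabla^{(s)}v$ of \eqref{eq:nonlocal-main3}; both routes rest on the same identity and yield the same result.
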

\begin{proof}
	Let us estimate the expression $-\langle F'(v),u-v\rangle$ from below:
	\begin{align}
		-\langle F'(v),u-v\rangle
		&=
		-\frac{1}{p} \,\langle D[v]_p^p, u-v \rangle 
		+
		\int_\Omega h\, (u-v) \, dx
		\\
		&=
		\label{eq:nonloc1}
		\frac{1}{p} \,\langle D[u]_p^p, u-v \rangle 
		-\frac{1}{p} \,\langle D[v]_p^p, u-v \rangle 
		=
		\frac{1}{p} \,\langle D[u]_p^p - D[v]_p^p, u-v \rangle. 
	\end{align}
	For the convenience of further analysis, we denote
$$
U(x,y) = u(x)-u(y), 
\quad
V(x,y) = v(x)-v(y).
$$
Then
\begin{equation}\label{eq:nonloc2}
\frac{1}{p} \,\langle D[u]_p^p - D[v]_p^p, u-v \rangle
=
\iint_{\mathbb{R}^{2N}} \frac{(|U|^{p-2}U - |V|^{p-2}V)(U-V)}{|x-y|^{N+ps}} \, dxdy.
\end{equation}

In the case $p \geq 2$, using the inequality \eqref{eq:algebr-ineq1}, we get
\begin{equation}\label{eq:nonloc3}
\iint_{\mathbb{R}^{2N}} \frac{(|U|^{p-2}U - |V|^{p-2}V)(U-V)}{|x-y|^{N+ps}} \, dxdy
\geq
2^{2-p}
\iint_{\mathbb{R}^{2N}} \frac{|U-V|^p}{|x-y|^{N+ps}} \, dxdy
=
2^{2-p} [u-v]_p^p.
\end{equation}

In the case $p < 2$, we argue analogously to the derivation of \eqref{eq:thm2:p<2}. 
Namely, using the H\"older inequality, the estimate \eqref{eq:algebr-ineq2}, and the inequality $(1+|t|)^p \leq 2 (1+|t|^p)$, we obtain
\begin{align}
	&\iint_{\mathbb{R}^{2N}} \frac{|U-V|^p}{|x-y|^{N+ps}} \,dxdy
	=
	\iint_{\mathbb{R}^{2N}} \left(\frac{|U-V|^{2}}{|x-y|^{N+ps}(|U| + |V|)^{2-p}}\right)^{\frac{p}{2}} 
	\left(\frac{(|U| + |V|)^p}{|x-y|^{N+ps}}\right)^{\frac{2-p}{2}} \,dxdy\\
	&\leq
	\left(\iint_{\mathbb{R}^{2N}} \frac{|U-V|^{2}}{|x-y|^{N+ps}(|U| + |V|)^{2-p}} \,dxdy \right)^{\frac{p}{2}}
	\left(\iint_{\mathbb{R}^{2N}} \frac{(|U| + |V|)^p}{|x-y|^{N+ps}}\, dxdy\right)^{\frac{2-p}{2}}\\
	&\leq
	\left(\frac{1}{p-1} \iint_{\mathbb{R}^{2N}} \frac{(|U|^{p-2} U - |V|^{p-2} V) (U- V)}{|x-y|^{N+ps}} \,dxdy\right)^{\frac{p}{2}}
	\left( 2 [u]_p^p + 2 [v]_p^p \right)^{\frac{2-p}{2}}.
\end{align}
From here, due to the energy estimate \eqref{eq:energy-id2}, we have the lower bound
\begin{equation}\label{eq:nonloc4}
	\iint_{\mathbb{R}^{2N}} \frac{(|U|^{p-2}U - |V|^{p-2}V)(U-V)}{|x-y|^{N+ps}} \, dxdy
	\geq
	\frac{(p-1) \, 2^{\frac{p-2}{p}}}{\left(C_{F,s}^{p'} \|h\|_{p'}^{p'} + [v]_p^p\right)^{\frac{2-p}{p}}} \, [u-v]_p^2.
\end{equation}

Let us estimate $-\langle F'(v),u-v\rangle$ from above: 
\begin{equation}\label{eq:nonloc5}
-\langle F'(v),u-v\rangle
\leq 
\|F'(v)\|_{*} \, [u-v]_p.
\end{equation}
Combining this inequality with \eqref{eq:nonloc1}, \eqref{eq:nonloc2}, and either \eqref{eq:nonloc3} for $p \geq 2$ or  \eqref{eq:nonloc4} for $p < 2$, we derive \eqref{eq:nonlocal-main1}. 

Assume now that $\mathcal{L}^s_p v \in L^{p'}(\Omega)$. 
Then, for any function $\xi \in \Wso$, it follows from Fubini's theorem and the skew symmetry arguments that
\begin{align}
	\frac{1}{p}\langle D[v]_p^p, \xi \rangle 
	&=
	\iint_{\mathbb{R}^{2N}} \frac{|v(x)-v(y)|^{p-2}(v(x)-v(y))(\xi(x)-\xi(y))}{|x-y|^{N+ps}} \, dxdy
	\\
	&=2 \int_{\mathbb{R}^N} \xi(x) 
	\int_{\mathbb{R}^N} 
	\frac{|v(x)-v(y)|^{p-2}(v(x)-v(y))}{|x-y|^{N+ps}} \, dydx
	\\
	\label{eq:frac:pointwise-weak}
	&=	\int_{\Omega} \xi(x) \mathcal{L}^s_p v(x) \,dx.
\end{align}
Thus, we get
$$
\langle F'(v),\xi\rangle = \int_{\Omega} \xi (\mathcal{L}^s_p v - h) \,dx 
\leq 
\|\mathcal{L}^s_p v - h\|_{p'} \|\xi\|_p 
\leq
C_{F,s} \|\mathcal{L}^s_p v - h\|_{p'} [\xi]_p,
$$
which implies \eqref{eq:nonlocal-main2}.

If there is no regularity $\mathcal{L}^s_p v \in L^{p'}(\Omega)$, then we argue in the following way (cf.\ \eqref{eq:thm1:proof:1}). 
For any $\xi \in \Wso$ and $\gamma^* \in Q_s^*$, we have 
\begin{align}
	\langle F'(v), \xi \rangle
	=
	\frac{1}{p}\langle D[v]_p^p, \xi \rangle 
	&- \iint_{\mathbb{R}^{2N}} \gamma^*(x,y) (\xi(x)-\xi(y)) \, \frac{dxdy}{|x-y|^{N+s}}
	\\
	\label{eq:nonloc:fin1}
	&+
	\iint_{\mathbb{R}^{2N}} \gamma^*(x,y) (\xi(x)-\xi(y)) \, \frac{dxdy}{|x-y|^{N+s}}
	 - \intO h \xi \,dx.
\end{align}	 
The first pair of terms on the right-hand side of \eqref{eq:nonloc:fin1} is estimated by the H\"older inequality and in view of \eqref{eq:Dupxi} and \eqref{eq:grad-div-nonlocal} as 
\begin{align}
	\frac{1}{p}\langle D[v]_p^p, \xi \rangle 
	&- \iint_{\mathbb{R}^{2N}} \gamma^*(x,y) (\xi(x)-\xi(y)) \, \frac{dxdy}{|x-y|^{N+s}}\\
	&=
	\iint_{\mathbb{R}^{2N}} \frac{|\nabla^{s}v|^{p-2}\nabla^{s}v  - \gamma^*}{|x-y|^{N/p'}} \, \frac{\xi(x)-\xi(y)}{|x-y|^{(N+sp)/p}} \, dxdy
	\\
	\label{eq:nonloc:fin2}
	&\leq
	\left(\mathcal{P}(|\nabla^{(s)} v|^{p-2} \nabla^{(s)} v - \gamma^*)\right)^{1/p'} [\xi]_p,
\end{align}
where at the last step the notations \eqref{eq:nonlocal-1} and \eqref{eq:nonloc:N} were also taken into account.
The second pair of terms on the right-hand side of \eqref{eq:nonloc:fin1} is estimated by virtue of Fubini's theorem and the skew symmetry of the function $\gamma^*$ as 
\begin{align}
	\iint_{\mathbb{R}^{2N}} \gamma^*(x,y) &(\xi(x)-\xi(y)) \, \frac{dxdy}{|x-y|^{N+s}}
	- 
	\intO h \xi \,dx\\
	&=
	2\int_{\mathbb{R}^{N}} \xi(x) 
	\int_{\mathbb{R}^{N}} \gamma^*(x,y)\,\frac{dxdy}{|x-y|^{N+s}}
	- \intO h \xi \,dx
	\\
	\label{eq:nonloc:fin3}
	&=
	\intO (\mathrm{div}^{(s)}{\gamma^*} - h) \xi \,dx 
	\leq
	C_{F,s}\|\mathrm{div}^{(s)}{\gamma^*} - h\|_{p'} [\xi]_p.
\end{align}
Combining \eqref{eq:nonloc:fin1}--\eqref{eq:nonloc:fin3}, we deduce \eqref{eq:nonlocal-main3}.
%showonlyrefs for equation (7.16):
\noeqref{eq:nonloc:fin2}
\end{proof}

\begin{remark}
	If the solution $u$ satisfies $\mathcal{L}^s_p u \in L^{p'}(\Omega)$, then $|\nabla^{(s)} u|^{p-2}\nabla^{(s)} u \in Q_s^*$.
	In this case, the choice $\gamma^*(x,y) = |\nabla^{(s)} u|^{p-2}\nabla^{(s)} u$ makes the second term on the right-hand side of \eqref{eq:nonlocal-main3} vanish.
	Moreover, if $v \to u$ in $\Wso$, then the first term on the right-hand side of \eqref{eq:nonlocal-main3} tends to zero.
	Indeed, applying the estimates \eqref{eq:algebr-ineq1:anisotrop:p<2:2} and \eqref{eq:algebr-ineq1:anisotrop:p>2:2} (with $\mathcal{A}=I$), we obtain
	\begin{align}
		\mathcal{P}(|\nabla^{(s)} v|^{p-2}\nabla^{(s)} v - |\nabla^{(s)} u|^{p-2}\nabla^{(s)} u)
		\leq
		C(p,s,h,v,\Omega) \,
		[u-v]_p^{\min\{p,p'\}}.
	\end{align}
	The estimate \eqref{eq:nonlocal-main3} is therefore a consistent a posteriori estimate.
\end{remark}

\begin{remark}
	The assumption $\mathcal{L}^s_p v \in L^{p'}(\Omega)$ can be guaranteed if, e.g., $v$ is Lipschitz in $\mathbb{R}^N$ and $s < 1-1/p$. 
	In this case, we have the equality $\mathcal{L}^s_p v = (-\Delta)^s_p v$, where
	\begin{equation}\label{eq:nonlocal-0}
		(-\Delta)^s_p v(x) 
		= 
		2 \,
		\text{v.p.}
		\int_{\mathbb{R}^N}
		\frac{|v(x)-v(y)|^{p-2}(v(x)-v(y))}{|x-y|^{N+ps}} \, dy.
	\end{equation} 
	The equality \eqref{eq:nonlocal-0} is often used to define the fractional $p$-Laplacian.
	The absence of the principal value of the integral in the definitions of $\mathcal{L}^s_p$ and $\text{div}^{(s)}$ narrows their domain of definition, but allows to get easily \eqref{eq:frac:pointwise-weak} and \eqref{eq:nonloc:fin3}.
\end{remark}	
	
\begin{remark}
	The minimization problem for the functional $F$ admits a dual formulation, see \cite[Proposition~8.3]{BLP}.
	Thus, the result of Theorem~\ref{thm:nonlocal} might be developed using methods of the duality theory (see \cite{RepBook1}). 
	We also refer to the overviews in \cite{AG,BLN} devoted to the regularity and numerical implementations of problems of the type \eqref{eq:DEu} with nonlocal operators.
\end{remark}

	\appendix
	\section{Appendix}\label{sec:appendix}
	
	For convenience, we present a few necessary inequalities with explicit constants, see \cite{PY,SZ}.
	
	\begin{lemma}\label{lem:inequalities:anisotropic}
		Let $p>1$ and $\mathcal{A}=\mathcal{A}(x)$, $x \in \Omega$, be a measurable symmetric matrix satisfying the assumptions \eqref{eq:anisotropic:A} and \eqref{eq:anisotropin:assumption}.
		The following assertions hold:
		\begin{enumerate}[label={\rm(\roman*)}]
		\item\label{lem:inequalities:anisotropic:p<2} 
		Let $p \leq 2$.
		Then, for any $a,b \in \mathbb{R}^N$ and $x \in \Omega$,
		\begin{align}
			\label{eq:algebr-ineq1:anisotrop:p<2:1}
			&(|b|^{p-2}\mathcal{A}(x)b - |a|^{p-2}\mathcal{A}(x)a) \cdot (b-a) 
			\geq 
			\frac{\lambda(x) \delta_\Omega(p,\mathcal{A})}{2} \frac{|b-a|^2}{(|b|+|a|)^{2-p}},\\
			\label{eq:algebr-ineq1:anisotrop:p<2:2}
			&||b|^{p-2} \mathcal{A}(x) b - |a|^{p-2} \mathcal{A}(x) a| 
			\leq
			\Lambda(x) \frac{3-p}{p-1} \,2^{p-1} \, |b-a|^{p-1},
		\end{align}	
		where the right-hand side of \eqref{eq:algebr-ineq1:anisotrop:p<2:1} is assumed to be zero for $a=b=0$.
	
		\item\label{lem:inequalities:anisotropic:p>2} 
		Let $p \geq 2$.
		Then, for any $a,b \in \mathbb{R}^N$ and $x \in \Omega$,
		\begin{align}
		\label{eq:algebr-ineq1:anisotrop:p>2:1}
		&(|b|^{p-2}\mathcal{A}(x)b - |a|^{p-2}\mathcal{A}(x)a) \cdot (b-a) 
		\geq \frac{\lambda(x) \, \delta_\Omega(p,\mathcal{A})}{8 (p-1)} \, |b-a|^p,\\
		\label{eq:algebr-ineq1:anisotrop:p>2:2}
		&||b|^{p-2} \mathcal{A}(x) b - |a|^{p-2} \mathcal{A}(x) a| 
		\leq \Lambda(x) (p-1) (|a|+|b|)^{p-2} |b-a|.
		\end{align}
		\end{enumerate}
	\end{lemma}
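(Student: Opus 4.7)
The four inequalities share a common backbone: one represents the difference $|b|^{p-2}\mathcal{A}(x)b-|a|^{p-2}\mathcal{A}(x)a$ as the integral of the derivative of $\Phi(c):=|c|^{p-2}\mathcal{A}(x)c$ along the segment joining $a$ to $b$, and then inserts the Cordes-type condition \eqref{eq:anisotropin:assumption} into the integrand. Concretely, writing $c_t:=a+t(b-a)$ and $h:=b-a$, a direct computation gives
\begin{equation}
\Phi(b)-\Phi(a) = \int_0^1 \bigl[(p-2)|c_t|^{p-4}(c_t\cdot h)\mathcal{A}(x)c_t + |c_t|^{p-2}\mathcal{A}(x)h\bigr]\,dt.
\end{equation}
The first step will be to fix this representation together with the corresponding scalar identity obtained by dotting with $h$.

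For the lower bounds \eqref{eq:algebr-ineq1:anisotrop:p<2:1} and \eqref{eq:algebr-ineq1:anisotrop:p>2:1}, I would estimate the integrand from below pointwise in $t$. The ``diagonal'' term is bounded via $\mathcal{A}h\cdot h\ge \lambda(x)|h|^2$, while for the ``cross'' term one uses Cauchy--Schwarz $|c_t\cdot h||\mathcal{A}c_t\cdot h|\le |c_t||\mathcal{A}c_t||h|^2$ together with the defining bound $|c_t||\mathcal{A}c_t|\le \mu(\mathcal{A})(\mathcal{A}c_t\cdot c_t)$ so as to produce a factor matching the structure of $\delta_\Omega(p,\mathcal{A})=\inf_{\Omega}(p-|p-2|\mu(\mathcal{A}))$. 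This is the main technical obstacle: one must arrange the algebra so that $\lambda(x)$ and $\delta_\Omega(p,\mathcal{A})/(p-1)$ (or $\delta_\Omega(p,\mathcal{A})$ itself) appear with the exact constants $1/(8(p-1))$ for $p\ge 2$ and $1/2$ for $p<2$. After this pointwise bound, the integration in $t$ reduces to the elementary estimates $\int_0^1|c_t|^{p-2}\,dt\asymp(|a|+|b|)^{p-2}$ when $p\ge 2$ and $\int_0^1|c_t|^{p-2}\,dt\gtrsim |b-a|^{p-2}$ when $p\le 2$, yielding the claimed $|h|^p$ lower bound for $p\ge 2$ and the $|h|^2/(|a|+|b|)^{2-p}$ bound for $p\le 2$.

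For the upper bounds \eqref{eq:algebr-ineq1:anisotrop:p<2:2} and \eqref{eq:algebr-ineq1:anisotrop:p>2:2}, the plan is more direct: take norms inside the integral representation, use $|\mathcal{A}(x)v|\le \Lambda(x)|v|$ on both terms, and reduce to controlling $\int_0^1 |c_t|^{p-3}|c_t\cdot h|\,dt$ and $\int_0^1 |c_t|^{p-2}\,dt$. For $p\ge 2$ these integrals are dominated by $(|a|+|b|)^{p-2}|h|$, giving \eqref{eq:algebr-ineq1:anisotrop:p>2:2} with the factor $(p-1)$ coming from the sum $(p-2)+1$. For $p<2$ the pointwise estimates would be sharpened using $|c_t|^{p-2}\le (|a|+|b|)^{p-2}$ is \emph{wrong-signed}, so one instead uses the refined bound $\bigl||b|^{p-2}b-|a|^{p-2}a\bigr|\le (3-p)/(p-1)\cdot 2^{p-1}|b-a|^{p-1}$, which is a known scalar inequality (see, e.g., the references \cite{PY,SZ}); multiplying by $\Lambda(x)$ after absorbing $\mathcal{A}$ as a bounded operator gives \eqref{eq:algebr-ineq1:anisotrop:p<2:2}.

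The only genuinely delicate point is obtaining the Cordes constant $\delta_\Omega(p,\mathcal{A})/(8(p-1))$ (resp.\ $\delta_\Omega(p,\mathcal{A})/2$) with the correct prefactor; everything else is routine application of Cauchy--Schwarz, the eigenvalue bounds \eqref{eq:anisotropic:A}, and elementary integration along the segment $[a,b]$. Since the statement explicitly refers the reader to \cite{PY,SZ}, I would follow the bookkeeping therein rather than re-deriving the optimal constants from scratch.
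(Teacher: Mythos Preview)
Your approach is essentially the same as the paper's: both use the integral representation along the segment $c_t=a+t(b-a)$, invoke the Cordes quantity $\mu(\mathcal{A})$ to bound the cross term in the integrand (the paper simply cites the corresponding formulas from \cite{SZ}), and then reduce everything to elementary estimates on $\int_0^1|c_t|^{p-2}\,dt$.

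One slip to fix: you have the two integral bounds interchanged. For $p\le 2$ the correct (and trivial) estimate is $\int_0^1|c_t|^{p-2}\,dt\ge(|a|+|b|)^{p-2}$, which is exactly what produces the factor $|h|^2/(|a|+|b|)^{2-p}$ you announce; the bound $\int_0^1|c_t|^{p-2}\,dt\gtrsim|b-a|^{p-2}$ is in fact \emph{false} for $p<2$ (take $a$ and $b$ close to each other but far from the origin). Conversely, for $p\ge 2$ the paper obtains $\int_0^1|c_t|^{p-2}\,dt\ge\frac{(|a|+|b|)^{p-2}}{4(p-1)}\ge\frac{|b-a|^{p-2}}{4(p-1)}$ via the intermediate inequality $\int_0^1|c_t|^{p-2}\,dt\ge\frac{|a|^{p-2}+|b|^{p-2}}{2(p-1)}$ from \cite{SZ}; this is where the $1/(8(p-1))$ constant comes from, so your $\asymp(|a|+|b|)^{p-2}$ will not give the stated prefactor without that refinement.
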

	\begin{proof}
		\ref{lem:inequalities:anisotropic:p<2} 
		Let $p \leq 2$.
		The inequality \eqref{eq:algebr-ineq1:anisotrop:p<2:1} follows from \cite[(5.5)]{SZ}:
		$$
		(|b|^{p-2}\mathcal{A}(x)b - |a|^{p-2}\mathcal{A}(x)a) \cdot (b-a)
		\geq 
		\frac{\lambda(x)}{2} \left(p - (2-p)\mu(\mathcal{A})\right) |b-a|^2 
		\int_0^1 |(1-s)a+sb|^{p-2} \,ds,
		$$
		and from the estimate $\int_0^1 |(1-s)a+sb|^{p-2} \,ds \geq (|a|+|b|)^{p-2}$ and the assumption \eqref{eq:anisotropin:assumption}.
		
		The inequality \eqref{eq:algebr-ineq1:anisotrop:p<2:2} follows from \cite[(5.4)]{SZ}:
		$$
		||b|^{p-2} \mathcal{A}(x) b - |a|^{p-2} \mathcal{A}(x) a| 
		\leq
		\Lambda(x) (3-p) |b-a| \int_0^1 |(1-s)a+sb|^{p-2} \,ds,
		$$
		and from \cite[(5.6)]{SZ}: $\int_0^1 |(1-s)a+sb|^{p-2} \,ds \leq C |b-a|^{p-2}$, where 
		$$
		C = \max\{(p-1)^{-1} 2^{p-1}, 2^{p-2}\} = (p-1)^{-1} 2^{p-1}.
		$$

		\ref{lem:inequalities:anisotropic:p>2} 
		Let $p \geq 2$. 
		The last inequality in \cite[p.~1809]{SZ} reads as 
		\begin{equation}\label{eq:anis-0x}
			(|b|^{p-2}\mathcal{A}(x)b - |a|^{p-2}\mathcal{A}(x)a) \cdot (b-a) 
			\geq
			\frac{\lambda(x)}{2} 
			\left(p - (p-2)\mu(\mathcal{A})\right) |b-a|^2 
			\int_0^1 |(1-s)a+sb|^{p-2} \,ds.
		\end{equation}
		The following inequality was also obtained therein: 
		\begin{equation}\label{eq:anis-1x1}
		\int_0^1 |(1-s)a+sb|^{p-2} \,ds 
		\geq
		\frac{|a|^{p-2} + |b|^{p-2}}{2(p-1)}.
		\end{equation}
		Using $(1+|t|)^{p-2} \leq 2 (1+|t|^{p-2})$, we arrive at
		\begin{equation}\label{eq:anis-1x2}
		\int_0^1 |(1-s)a+sb|^{p-2} \,ds 
		\geq
		\frac{(|a| + |b|)^{p-2}}{4(p-1)}
		\geq
		\frac{|b-a|^{p-2}}{4(p-1)}.
		\end{equation}
		Combining \eqref{eq:anis-0x} with \eqref{eq:anis-1x2}, we deduce \eqref{eq:algebr-ineq1:anisotrop:p>2:1}.
		
		The inequality \eqref{eq:algebr-ineq1:anisotrop:p>2:2} follows from the last inequality in \cite[p.~1809]{SZ}:
		$$
		||b|^{p-2} \mathcal{A}(x) b - |a|^{p-2} \mathcal{A}(x) a|
		\leq
		\Lambda(x) (p-1) |b-a| \int_0^1 |(1-s)a + sb|^{p-2} \,ds,
		$$
		by using the estimate $\int_0^1 |(1-s)a+sb|^{p-2} \,ds \leq (|a|+|b|)^{p-2}$.
	\end{proof}

	The obtained estimates, in particular, allow to prove the following result. 
	\begin{lemma}\label{lem:flows-anisotrop}
	Let $p>1$ and $\mathcal{A}=\mathcal{A}(x)$, $x \in \Omega$, be a measurable symmetric matrix satisfying the assumptions \eqref{eq:anisotropic:A} and \eqref{eq:anisotropin:assumption}.
	Then for any $v, w \in W^{1,p}(\Omega)$ the following assertions hold:
	\begin{enumerate}[label={\rm(\roman*)}]
		\item\label{lem:flows-anisotrop:1} If $p \leq 2$, then 
		\begin{equation}\label{eq:anisotrop:flow1}
			\||\nabla w|^{p-2} \mathcal{A}\nabla w - |\nabla v|^{p-2} \mathcal{A}\nabla v\|_{p'}
			\leq
			\sup_{x \in \Omega} \Lambda(x) \frac{3-p}{p-1} \,2^{p-1}
			\|\nabla (w-v)\|_p^{p-1}.
		\end{equation}
		\item\label{lem:flows-anisotrop:2} If $p \geq 2$, then 
		\begin{align}
			&\||\nabla w|^{p-2} \mathcal{A} \nabla w - |\nabla v|^{p-2} \mathcal{A} \nabla v\|_{p'}
			\\
			\label{eq:anisotrop:flow2}
			&\leq 
			\sup_{x \in \Omega} \Lambda(x) \, (p-1) 2^{\frac{p-2}{p}}
			\left( \|\nabla w\|_p^p + \|\nabla v\|_p^p\right)^{\frac{p-2}{p}}
			\|\nabla (w-v)\|_p.
		\end{align}
	\end{enumerate}
\end{lemma}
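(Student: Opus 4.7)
The proof follows by combining the pointwise inequalities from Lemma~\ref{lem:inequalities:anisotropic} with integration and, in the second case, a single application of Hölder's inequality; I would treat parts~\ref{lem:flows-anisotrop:1} and \ref{lem:flows-anisotrop:2} separately.

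For part~\ref{lem:flows-anisotrop:1} with $p \leq 2$, I plan to apply \eqref{eq:algebr-ineq1:anisotrop:p<2:2} pointwise at almost every $x \in \Omega$ with $a = \nabla v(x)$ and $b = \nabla w(x)$. Raising both sides to the $p'$-th power, integrating over $\Omega$, and using the identity $(p-1)p' = p$, the right-hand side collapses to $\bigl(\sup_{x \in \Omega}\Lambda(x)\bigr)^{p'} \bigl(\tfrac{3-p}{p-1}\,2^{p-1}\bigr)^{p'}\|\nabla(w-v)\|_p^{p}$. Taking the $1/p'$-th root and observing that $p/p' = p-1$ yields exactly \eqref{eq:anisotrop:flow1}. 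No Hölder step is required here.

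For part~\ref{lem:flows-anisotrop:2} with $p \geq 2$, I would start from \eqref{eq:algebr-ineq1:anisotrop:p>2:2}, which gives the pointwise bound on the integrand by $\Lambda(x)(p-1)(|\nabla w|+|\nabla v|)^{p-2}|\nabla(w-v)|$. Raising to the $p'$-th power and integrating, the central step is Hölder's inequality applied with the conjugate exponents $\alpha = (p-1)/(p-2)$ and $\alpha' = p-1$, chosen precisely so that $(|\nabla w|+|\nabla v|)^{(p-2)p'\alpha} = (|\nabla w|+|\nabla v|)^p$ and $|\nabla(w-v)|^{p'\alpha'} = |\nabla(w-v)|^p$. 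This factorizes the right-hand side into integer powers of $L^p$-norms. Finally, following the same device used in the proofs of Theorem~\ref{thm:2} and Proposition~\ref{prop:lower-bound1}, I would use the convexity-type inequality $(s+t)^p \leq 2(s^p+t^p)$ to bound $\int_\Omega(|\nabla w|+|\nabla v|)^p\,dx$ by $2(\|\nabla w\|_p^p+\|\nabla v\|_p^p)$; after collecting the exponents, this produces the factor $2^{(p-2)/p}$ and the norm combination $\bigl(\|\nabla w\|_p^p+\|\nabla v\|_p^p\bigr)^{(p-2)/p}$ appearing in \eqref{eq:anisotrop:flow2}.

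There is no conceptual difficulty in this argument, since the heavy lifting has already been done in Lemma~\ref{lem:inequalities:anisotropic}. The only hazard is bookkeeping: one must verify that the conjugate exponents in the Hölder step of part~\ref{lem:flows-anisotrop:2} are matched so that both factors reduce to $L^p$-norms, and track the exponents of $2$ and $(p-1)$ carefully so that the final constants agree with those stated.
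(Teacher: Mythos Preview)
Your plan is step-for-step the same as the paper's proof: for $p\le 2$ the bound is read off directly from the pointwise inequality \eqref{eq:algebr-ineq1:anisotrop:p<2:2}, and for $p\ge 2$ one raises \eqref{eq:algebr-ineq1:anisotrop:p>2:2} to the $p'$-th power, applies H\"older with exactly the exponents $\alpha=(p-1)/(p-2)$, $\alpha'=p-1$ you name, and then splits $(|\nabla w|+|\nabla v|)^p$.

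One genuine caution on the final step of part~\ref{lem:flows-anisotrop:2}: the inequality $(s+t)^p\le 2(s^p+t^p)$ you quote is \emph{false} for $p>2$ (take $s=t=1$, giving $2^p>4$). The valid convexity bound is $(s+t)^p\le 2^{p-1}(s^p+t^p)$. The paper itself invokes the equivalent form $(1+|t|)^p\le 2(1+|t|^p)$ at this point, which has the same defect, so by following it you reproduce the paper's chain and the stated constant $2^{(p-2)/p}$; but be aware that with the correct power of~$2$ the argument actually delivers $2^{(p-1)(p-2)/p}$ in place of $2^{(p-2)/p}$ in \eqref{eq:anisotrop:flow2}. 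This does not affect the qualitative content of the lemma (continuity of the flux map), only the explicit constant.
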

\begin{proof}
	In the case $p \leq 2$, the required estimate follows from \eqref{eq:algebr-ineq1:anisotrop:p<2:2}.
	In the case $p \geq 2$, applying the inequality \eqref{eq:algebr-ineq1:anisotrop:p>2:2}, we have
	\begin{align}
		&\intO ||\nabla w|^{p-2} \mathcal{A}(x) \nabla w - |\nabla v|^{p-2} \mathcal{A}(x)\nabla v|^{p'} \,dx
		\\
		&\leq
		\sup_{x \in \Omega} (\Lambda(x))^{p'} (p-1)^{p'} \intO (|\nabla w| + |\nabla v|)^{(p-2)p'} |\nabla (w-v)|^{p'} \,dx\\
		&\leq
		\sup_{x \in \Omega} (\Lambda(x))^{p'} (p-1)^{p'}
		\left(\intO (|\nabla w| + |\nabla v|)^{p} \,dx\right)^{\frac{p-2}{p-1}}
		\left( \intO |\nabla (w-v)|^{p} \,dx \,dx\right)^{\frac{1}{p-1}}\\
		&\leq 
		\sup_{x \in \Omega} (\Lambda(x))^{p'} (p-1)^{p'} 2^{\frac{p-2}{p-1}}
		\left(\intO |\nabla w|^p \,dx + \intO |\nabla v|^{p}  \,dx\right)^{\frac{p-2}{p-1}}
		\left( \intO |\nabla (w-v)|^{p} \,dx \,dx\right)^{\frac{1}{p-1}},
	\end{align}
	which implies \eqref{eq:anisotrop:flow2}.
\end{proof}

Let us formulate an analogue of Lemma~\ref{lem:flows-anisotrop} in the special case $\mathcal{A} = I$, with better constants and with inverse inequalities.
\begin{lemma}\label{lem:flows1}
	Let $p>1$ and $v, w \in W^{1,p}(\Omega)$.
	The following assertions hold:
	\begin{enumerate}[label={\rm(\roman*)}]
		\item\label{lem:flows1:1} If $p \leq 2$, then 
		\begin{align}
			\label{eq:thm12:flow1}
			\||\nabla w|^{p-2} \nabla w - |\nabla v|^{p-2} \nabla v\|_{p'}
			&\leq
			2^{2-p}
			\|\nabla (w-v)\|_p^{p-1},\\
			\label{eq:thm12:flow1inv}
			(p-1) 2^{\frac{p-2}{p}}
			\left( \|\nabla w\|_p^p + \|\nabla v\|_p^p\right)^{\frac{p-2}{p}}
			\|\nabla (w-v)\|_p
			&\leq
			\||\nabla w|^{p-2} \nabla w - |\nabla v|^{p-2} \nabla v\|_{p'}.
		\end{align}
		\item\label{lem:flows1:2} If $p \geq 2$, then 
		\begin{align}
			\label{eq:thm12:flow2}
			\||\nabla w|^{p-2} \nabla w - |\nabla v|^{p-2} \nabla v\|_{p'}
			&\leq 
			(p-1) 2^{\frac{p-2}{p}}
			\left( \|\nabla w\|_p^p + \|\nabla v\|_p^p\right)^{\frac{p-2}{p}}
			\|\nabla (w-v)\|_p,~~~\\
			\label{eq:thm12:flow2inv}
			2^{2-p}\|\nabla (w-v)\|_p^{p-1} 
			&\leq 
			\||\nabla w|^{p-2} \nabla w - |\nabla v|^{p-2} \nabla v\|_{p'}.
		\end{align}
	\end{enumerate}
\end{lemma}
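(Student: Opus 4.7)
The four inequalities split naturally into two ``direct'' bounds (controlling the flux difference by the gradient difference: \eqref{eq:thm12:flow1} and \eqref{eq:thm12:flow2}) and two ``inverse'' bounds (\eqref{eq:thm12:flow1inv} and \eqref{eq:thm12:flow2inv}). The plan is to mirror, in the isotropic case $\mathcal{A}=I$, the derivation of Lemma~\ref{lem:flows-anisotrop}, but now using the sharper scalar algebraic inequalities \eqref{eq:algebr-ineq1} and \eqref{eq:algebr-ineq2} directly (rather than their anisotropic counterparts), which will furnish the cleaner constants appearing in the statement.

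For the direct bounds, I would use pointwise Lipschitz-type estimates on the map $b \mapsto |b|^{p-2}b$. When $p\leq 2$, the inequality $\bigl||b|^{p-2}b-|a|^{p-2}a\bigr|\leq 2^{2-p}|b-a|^{p-1}$ (the bound \eqref{eq:algebr4-0} used in the proof of Proposition~\ref{prop:lower-bound1}), after raising to the power $p'$ and integrating, gives \eqref{eq:thm12:flow1} immediately since $(p-1)p'=p$. When $p\geq 2$, I would apply instead \eqref{eq:algebr4x}, namely $\bigl||b|^{p-2}b-|a|^{p-2}a\bigr|\leq (p-1)(|a|+|b|)^{p-2}|b-a|$, raise to the power $p'$, integrate, and split by H\"older with conjugate exponents $(p-1)/(p-2)$ and $p-1$ exactly as in the proof of Lemma~\ref{lem:flows-anisotrop}\ref{lem:flows-anisotrop:2}, then absorb $(|\nabla w|+|\nabla v|)^p$ using $(1+|t|)^p\leq 2(1+|t|^p)$. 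This yields \eqref{eq:thm12:flow2} with constant $(p-1)\,2^{(p-2)/p}$ instead of the cruder anisotropic one.

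For the inverse bounds, the key point is to replace the pointwise norm estimate by the strict monotonicity bound, and then introduce the $L^{p'}$ norm of the flux difference via Cauchy--Schwarz applied pointwise: $(|b|^{p-2}b - |a|^{p-2}a)\cdot (b-a)\leq |\,|b|^{p-2}b - |a|^{p-2}a\,|\,|b-a|$. When $p\geq 2$, combining \eqref{eq:algebr-ineq1} with this pointwise Cauchy--Schwarz and integrating gives $2^{2-p}\|\nabla(w-v)\|_p^p \leq \| \cdots \|_{p'}\|\nabla(w-v)\|_p$, which is \eqref{eq:thm12:flow2inv} after dividing. The case $p\leq 2$ is the delicate one: here I would reproduce the weighted H\"older splitting from \eqref{eq:thm2:p<2} in the proof of Theorem~\ref{thm:2}, writing $|\nabla(w-v)|^p$ as $\bigl(|\nabla(w-v)|^2/(|\nabla w|+|\nabla v|)^{2-p}\bigr)^{p/2}(|\nabla w|+|\nabla v|)^{p(2-p)/2}$, applying H\"older with exponents $2/p$ and $2/(2-p)$, and then inserting \eqref{eq:algebr-ineq2} on the first factor and $(1+|t|)^p\leq 2(1+|t|^p)$ on the second. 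The result is a bound of the form
\begin{equation*}
\|\nabla(w-v)\|_p^p \leq \frac{2^{(2-p)/2}}{(p-1)^{p/2}} \bigl(\||\nabla w|^{p-2}\nabla w - |\nabla v|^{p-2}\nabla v\|_{p'}\,\|\nabla(w-v)\|_p\bigr)^{p/2} \bigl(\|\nabla w\|_p^p+\|\nabla v\|_p^p\bigr)^{(2-p)/2},
\end{equation*}
after which dividing by $\|\nabla(w-v)\|_p^{p/2}$ and raising to the power $2/p$ yields \eqref{eq:thm12:flow1inv}.

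The main obstacle is the inverse bound \eqref{eq:thm12:flow1inv}: the chain of H\"older manipulations in the $p<2$ regime has to be set up so that the algebraic identity $((2-p)/2)\cdot(2/p) = (2-p)/p$ lands on the right power of $\|\nabla w\|_p^p+\|\nabla v\|_p^p$, and so that the constant collapses to $(p-1)\,2^{(p-2)/p}$; the other three estimates are essentially a single pointwise inequality followed by Cauchy--Schwarz or a standard H\"older split.
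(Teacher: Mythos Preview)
Your proposal is correct and follows essentially the same route as the paper: the direct bounds \eqref{eq:thm12:flow1} and \eqref{eq:thm12:flow2} come from the pointwise inequalities \eqref{eq:algebr4-0} and \eqref{eq:algebr4} (the latter followed by the same H\"older split and $(1+|t|)^p\le 2(1+|t|^p)$ as in Lemma~\ref{lem:flows-anisotrop}), while the inverse bounds \eqref{eq:thm12:flow2inv} and \eqref{eq:thm12:flow1inv} are obtained exactly as you describe, from \eqref{eq:algebr-ineq1} and from the weighted H\"older splitting of \eqref{eq:thm2:p<2} combined with \eqref{eq:algebr-ineq2}, each capped by the pointwise Cauchy--Schwarz bound $(|b|^{p-2}b-|a|^{p-2}a)\cdot(b-a)\le\bigl||b|^{p-2}b-|a|^{p-2}a\bigr|\,|b-a|$ and H\"older. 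Your tracking of the exponents and the constant $(p-1)\,2^{(p-2)/p}$ in the $p<2$ inverse case is accurate.
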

\begin{proof}
	In the case $p \leq 2$, the estimate \eqref{eq:thm12:flow1} is a direct consequence of the algebraic inequality \cite[Chapter~12, (VIII)]{Lind}:
	\begin{equation}\label{eq:algebr4-0}
		||b|^{p-2} b - |a|^{p-2} a| \leq 2^{2-p} |b-a|^{p-1}, 
		\quad a,b \in \mathbb{R}^N, \quad p \leq 2.
	\end{equation}
	The estimate \eqref{eq:thm12:flow1inv} is obtained by arguments similar to \eqref{eq:thm2:p<2}. Namely, by virtue of the H\"older inequality and \eqref{eq:algebr-ineq2}, we obtain
	\begin{align}
		&\intO |\nabla (w-v)|^p \,dx
		=
		\intO \left(\frac{|\nabla (w-v)|^{2}}{(|\nabla u| + |\nabla v|)^{2-p}}\right)^{\frac{p}{2}} (|\nabla w| + |\nabla v|)^{\frac{p(2-p)}{2}} \,dx\\
		&\leq
		\left(\intO \frac{|\nabla (w-v)|^2}{(|\nabla w| + |\nabla v|)^{2-p}} \,dx \right)^{\frac{p}{2}}
		\left(\intO (|\nabla w| + |\nabla v|)^{p} \, dx\right)^{\frac{2-p}{2}}\\
		&\leq
		\left(\frac{1}{p-1} \intO (|\nabla w|^{p-2} \nabla u - |\nabla v|^{p-2} \nabla v) \cdot (\nabla w - \nabla v) \,dx\right)^{\frac{p}{2}}
		\left( 2\intO |\nabla w|^p \,dx + 2 \intO |\nabla v|^{p} \, dx\right)^{\frac{2-p}{2}}.
	\end{align}
	Thus, 
	\begin{align}
		(p-1) 2^\frac{p-2}{p} 
		(\|\nabla w\|_p^p+\|\nabla v\|_p^p)^\frac{p-2}{p}
		\|\nabla (w-v)\|_p^2 
		&\leq
		 \intO (|\nabla w|^{p-2} \nabla w - |\nabla v|^{p-2} \nabla v) \cdot (\nabla w - \nabla v) \,dx
		 \\
		&\leq
		\||\nabla w|^{p-2} \nabla w - |\nabla v|^{p-2} \nabla v\|_{p'} \|\nabla (w-v)\|_p,
	\end{align}
	which yields \eqref{eq:thm12:flow1inv}.
	
	In the case $p \geq 2$, a simple estimate of the equality \cite[Chapter~12, (IV)]{Lind} gives
	\begin{equation}\label{eq:algebr4}
		||b|^{p-2} b - |a|^{p-2} a| \leq (p-1) (|a|+|b|)^{p-2} |b-a|, \quad a,b \in \mathbb{R}^N, \quad p \geq 2.
	\end{equation}
	Using \eqref{eq:algebr4}, the H\"older inequality, and  $(1+|t|)^p \leq 2 (1+|t|^p)$, we obtain
	\begin{align}
		&\intO ||\nabla w|^{p-2} \nabla w - |\nabla v|^{p-2} \nabla v|^{p'} \,dx
		\leq
		(p-1)^{p'} \intO (|\nabla w| + |\nabla v|)^{(p-2)p'} |\nabla (w-v)|^{p'} \,dx\\
		&\leq
		(p-1)^{p'}
		\left(\intO (|\nabla w| + |\nabla v|)^{p} \,dx\right)^{\frac{p-2}{p-1}}
		\left( \intO |\nabla (w-v)|^{p} \,dx \,dx\right)^{\frac{1}{p-1}}\\
		\label{eq:lem:flows1:2}
		&\leq 
		(p-1)^{p'} 2^{\frac{p-2}{p-1}}
		\left(\intO |\nabla w|^p \,dx + \intO |\nabla v|^{p}  \,dx\right)^{\frac{p-2}{p-1}}
		\left( \intO |\nabla (w-v)|^{p} \,dx \,dx\right)^{\frac{1}{p-1}},
	\end{align}
	which implies \eqref{eq:thm12:flow2}.
	
	The estimate \eqref{eq:thm12:flow2inv} follows from the inequality \eqref{eq:algebr-ineq1}.
\end{proof}

\medskip
\noindent
\textbf{Acknowledgement.}
{The authors express their gratitude to D.E.~Apushkinskaya and S.I.~Repin for useful discussions and comments, which helped to strengthen some results and improve the final version of the text.}

	\addcontentsline{toc}{section}{\refname}
	\small

\end{document}